\newtheorem{theorem}{Theorem}[section]
\newtheorem{proposition}{Proposition}[section]
\newtheorem{lemma}{Lemma}[section]
\newtheorem{corollary}{Corollary}[section]
\numberwithin{equation}{section}
\title[Inverse problems for evolution equations]{A unified approach to solving some inverse problems for evolution equations by using observability inequalities}
\thanks{The research of  MC and FT was supported in part by  grant
LabEx PERSYVAL-Lab (ANR-11-LABX- 0025-01) and grant 
ANR-17-CE40-0029 of the French National Research Agency ANR (project MultiOnde).}
\author[Ka\"{\i}s Ammari]{Ka\"{\i}s Ammari}
\address{Ka\"{\i}s Ammari, UR Analysis and Control of PDEs, UR 13ES64, Department of Mathematics, Faculty of Sciences of Monastir, University of Monastir, 5019 Monastir, Tunisia }
\email{kais.ammari@fsm.rnu.tn}
\author[Mourad Choulli]{Mourad Choulli}
\address{Universit\'e de Lorraine, 34 cours L\'eopold, 54052 Nancy cedex, France}
\email{mourad.choulli@univ-lorraine.fr}
\author[Faouzi Triki]{Faouzi Triki}
\address{Laboratoire Jean Kuntzmann,  UMR CNRS 5224, 
Universit\'e  Grenoble-Alpes, 700 Avenue Centrale,
38401 Saint-Martin-d'H\`eres, France}
\email{faouzi.triki@univ-grenoble-alpes.fr}
\date{}
\begin{document}

\begin{abstract}
We survey some of our recent results on inverse problems for evolution equations. The goal is to provide a unified approach to solve various types of evolution equations. The  inverse problems we consider consist in determining unknown coefficients from boundary measurements by varying  initial conditions. Based on observability inequalities and a special choice of  initial conditions, we provide uniqueness and stability estimates for the recovery of  volume and  boundary  lower order coefficients in wave and heat  equations. Some of the results presented here  are slightly improved from their original versions.

\end{abstract}

\subjclass[2010] {35R30}

\keywords{Evolution equations, Laplace-Beltrami operator, observability inequality, geometric control, initial-to-boundary operator}

\maketitle

\tableofcontents

\section{Introduction}
Inverse coefficient problems for evolution equations have been a very active  area in  mathematical and numerical research over the last decades, driven by numerous  applications.   They are intrinsically difficult to solve: this fact is due
 in part to their very mathematical structure and to the fact that generally only partial data is available \cite{Is}. 
We survey in this paper some of our recent results on inverse problems for evolution equations concerning  heat and 
wave equations. In \cite{ASTT} the authors proposed a general method to deal with inverse source problems for evolution equations. Starting from the ideas in \cite{ASTT}, we developed an approach based on observability inequalities and a spectral decomposition to solve some inverse coefficients problems in evolution equations \cite{AC1, AC3, ACT}.
However the approach is older than that.  Inverse coefficient problems in heat and wave equations using control techniques have been studied by a large community  of people (see for instance \cite{Ya, LT, IY, IY2,  RS, PY, BK, BaK} 
and the references therein). It would be impossible to present here all the relevant results that have been proved in this research direction.  We will  be mainly focusing  on the results  that are  closely connected to the  considered  inverse coefficient problems in heat and wave equations. 

The measurements  are made on a sub-boundary by varying initial conditions. The key idea in our analysis consists in reducing the inverse coefficients problems to inverse source problems. This is achieved by using a spectral decomposition
and unique continuation property of eigenfunctions. 

For simplicity convenience we limited ourselves to initial boundary value problems for wave and heat equations. But our analysis can be extended to other types of evolution equations such as dynamical Schr\"odinger equation.

The main ingredient in our approach is observability inequalities. We point out that the wave and the heat equations have different observability properties. We know that, under some appropriate conditions, the wave equation is exactly observable, while the heat equation is only final time observable \cite{TW, Zu}. We refer to Section 2 for details. In Section 3, we establish weighted interpolation inequalities involving  the eigenfunctions of  Laplace-Beltrami  operator that are useful in 
the analysis of the stability issue of the studied inverse coefficient problems. These inequalities have been obtained 
by quantifying the unique continuation property for  the Laplace-Beltrami  operator  through weighted
energy estimates with the aid of Carleman type inequalities.  We present an abstract framework 
for the inverse source problem in Section 4. Based on the introduced observability inequalities we provide 
uniqueness and stability inequalities of the  recovery of volume and boundary  lower order coefficients in wave and heat equations  from boundary measurements   in respectively Sections 5 and 6.

\section{Observability inequalities}

We collect in this section various observability inequalities that are necessary to the analysis of the inverse problems we want to tackle in this text. Since most of these results  are well recorded in the literature we limited ourselves to give their precise statement and provide the references where the proofs can be found.

\subsection{Wave and heat equations in a Riemannian manifold}

Let $n\geq2$ be an integer and consider $M=(M,g)$ a compact $n$-dimensional Riemannian manifold with boundary. By  a manifold with boundary we mean a $C^\infty$ manifold and its boundary is $C^\infty$ manifold of dimension $n-1$. Throughout, we adopt the Einstein convention summation for repeated indices. If in any term the same index name appears twice, as both  an upper and a lower index, that term is assumed to be summed from $1$ to $n$.

In local coordinates system $x=(x^1,\ldots ,x^n)$,
\[
g=g_{ij}dx^i\otimes dx^j.
\]

Let $(\partial _1,\ldots ,\partial _n)$ be the dual basis of $(x^1,\ldots ,x^n)$. For two vector fields $X=X^i\partial _i$ and $Y=Y^j\partial _j$ over $M$, set
\[
\langle X,Y\rangle = g_{ij}X^iY^j
\]
and  $|X|=\sqrt{\langle X,X\rangle}$.

Recall that the gradient of $u\in C^\infty (M)$ is the vector field given by
\[
\nabla u= g^{ij}\partial _iu\partial _j 
\]
and the Laplace-Beltrami operator is the operator acting as follows
\[
\Delta u=\frac{1}{\sqrt{\mbox{det}\, g}}\partial _i\left( \sqrt{\mbox{det}\, g}\, g^{ij}\partial _ju\right),
\]
where $(g^{ij})$ denote the inverse of the metric $(g_{ij})$.

We are first concerned with observability inequalities for the wave equation. Consider then the following initial-boundary value problem, abbreviated to  IBVP's in the sequel,  for the wave equation:
\begin{equation}\label{O1}
\left\{
\begin{array}{lll}
 \partial _t^2 u - \Delta u + q(x)u + a(x) \partial_t u = 0 \quad &\mbox{in}\;   M \times (0,\tau), 
 \\
u = 0 &\mbox{on}\;  \partial M \times (0,\tau), 
\\
u(\cdot ,0) = u_0,\; \partial_t u (\cdot ,0) = u_1.
\end{array}
\right.
\end{equation}

The usual energy space for the wave equation is given by
\[
\mathcal{H} =H_0^1(M) \oplus L^2(M).
\]
 According to \cite[sections 5 and 6, Chapter XVIII]{DL} or \cite[Chapter 2] {BY}),  for any $q,a\in L^\infty (M  )$, $\tau >0$ and $(u_0,u_1)\in \mathcal{H}_0$, the IBVP \eqref{O1} has a unique solution 
 \[ 
 u=u(q,a, (u_0,u_1))\in C([0,\tau ],H_0^1(M  ))\] so that $\partial _tu\in C([0,\tau ],L^2(M))$. If in addition
 \[
 \|q\|_\infty +\|a\|_\infty  \le \aleph,
 \]
 for some constant $\aleph>0$, then by the energy estimate  
\begin{equation}\label{O2}
\|u\|_{C([0,\tau ], H_0^1(M ))}+\|\partial _t u\|_{C([0,\tau ], L^2(M))}\leq C\|(u_0,u_1)\|_{\mathcal{H}_0},
\end{equation}
holds with $C=C(\aleph)>0$ is a nondecreasing function.

Denote by $\nu$ the  unit normal vector field pointing inward $M$ and set $\partial _\nu u =\langle \nabla u,\nu\rangle$. From \cite[Lemma 2.4.1]{BY} $\partial _\nu u\in L^2(\partial M \times (0,\tau))$ and
\begin{equation}\label{O3}
\|\partial_\nu u\|_{L^2(\partial M \times (0,\tau))}\le c_M\left( \|(u_0,u_1)\|_{\mathcal{H}_0} +\|qu + a\partial_t u\|_{L^1((0,\tau) , L^2(M)} \right),
\end{equation}
where $c_M$ is a constant depending only on $M$.

In light of \eqref{O2}, \eqref{O3} yields
\begin{equation}\label{O4}
\|\partial_\nu u\|_{L^2(\partial M \times (0,\tau))}\le C\|(u_0,u_1)\|_{\mathcal{H}},
\end{equation}
with a constant $C$ of the same form as in \eqref{O2}.

Let $\Gamma$ be a non empty open subset of $\partial M$ and $\tau >0$ so that  $(\Gamma ,\tau)$ geometrically control $M$. This means that every
generalized geodesic  traveling at speed one in $M$ meets $\Gamma$ in a non-diffractive point at a time $t\in (0,\tau )$ (we refer to \cite{Le} for more details).

Fix $(q_0,a_0)\in L^\infty (M)\times L^\infty (M)$. In light of \cite[theorem page 169]{Le} (which remains valid for the wave operator plus an operator involving space derivatives of first order) and bearing in mind that controllability is equivalent to observability we can state the following inequality
\begin{equation}\label{e5}
2\kappa_0 \| (u_0,u_1)\|_{\mathcal{H}} \le \| \partial _\nu u^0\|_{L^2(\Gamma \times (0,\tau ))},
\end{equation}
for some constant $\kappa_0 >0,$ where we set $u^0=u(a_0, q_0,(u_0,u_1))$ for $(u_0,u_1)\in \mathcal{H}$.

By a perturbation argument,  there exists $\beta >0$, depending on $(q_0,a_0)$ and $\kappa_0$, so that, for any $(q,a)=(q_0,a_0)+(\tilde{q},\tilde{a})$, with $(\tilde{q},\tilde{a})\in  L^\infty (M)\times L^\infty (M)$ satisfying $\|(\tilde{q},\tilde{a})\|_{ L^\infty (M)\times L^\infty (M)}\le \beta$, we have
\[
\kappa_0 \| (u_0,u_1)\|_{\mathcal{H}} \le \| \partial _\nu u\|_{L^2(\Gamma \times (0,\tau ))}.
\]

Here $\kappa _0>0$ is the same as in previous inequality and $ u=u( a, q, (u_0,u_1))$.

\begin{theorem}\label{theorem-O1}
Let   $(q_0,a_0)\in L^\infty (M)\times L^\infty (M)$ and assume  that $(\Gamma, \tau) $ geometrically control $M$. There exist $\kappa>0$ and $\beta>0$, only depending on $\Gamma, M$ and  $(q_0,a_0)$, such that for any $(q,a)=(q_0,a_0)+(\tilde{q},\tilde{a})$  with  $(\tilde{q},\tilde{a})$
satisfying 
\[
\|(\tilde{q},\tilde{a})\|_{ L^\infty (M)\times L^\infty (M)}\le \beta
\]
we have
\begin{equation}\label{O5}
\kappa \| (u_0,u_1)\|_{\mathcal{H}_0} \le \| \partial _\nu u\|_{L^2(\Gamma \times (0,\tau ))},
\end{equation}
where $u=u(q,a,(u_0,u_1))$.
\end{theorem}

Next, we examine the case where we do not assume that $(\Gamma ,\tau)$ geometrically control $M$. Define
\[
\mathbf{d}(\Gamma )=\sup\{d(x,\Gamma );\; x\in M\},
\]
let
\[
v=v(q,(u_0,v_0))=u(q,0,(u_0,u_1))
\]
and set
\[
\mathcal{H}_{-1}=L^2(M)\oplus H^{-1}(M ).
\]

In light of \cite[Corollary 3.2]{LL} we have

\begin{theorem}\label{theorem-O2}
Let $\aleph>0$. Under the assumption $\tau >2\mathbf{d}(\Gamma )$ there exist positive constants $C$, $\kappa$ and  $\epsilon _0$ so that for any $q\in L^\infty (M)$ with $\|q\|_{L^\infty (M)}\le \aleph$ we have
\begin{equation}\label{O6}
C\|(u_0,u_1)\|_{\mathcal{H}_{-1}}\le e^{\kappa \epsilon}\| \partial _\nu v\|_{L^2(\Gamma \times (0,\tau ))}+\frac{1}{\epsilon}\|(u_0,u_1)\|_{\mathcal{H}},\quad (u_0,u_1)\in \mathcal{H},\; \epsilon \ge \epsilon_0.
\end{equation}
Here $v=v(q,(u_0,v_0))$.
\end{theorem}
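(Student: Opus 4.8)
The plan is to reduce the problem to the known interpolation-type observability inequality for the free wave equation (that is, with $q=0$), namely \cite[Corollary 3.2]{LL}, and then absorb the zero-order perturbation $qv$ by treating it as a source term. First I would recall that \cite[Corollary 3.2]{LL} gives exactly an inequality of the form \eqref{O6} for the solution $w$ of the wave equation $\partial_t^2 w-\Delta w=0$ with $w=0$ on $\partial M\times(0,\tau)$ and Cauchy data $(u_0,u_1)$: under $\tau>2\mathbf{d}(\Gamma)$ there are constants $C,\kappa,\epsilon_0>0$ with
\[
C\|(u_0,u_1)\|_{\mathcal{H}_{-1}}\le e^{\kappa\epsilon}\|\partial_\nu w\|_{L^2(\Gamma\times(0,\tau))}+\tfrac{1}{\epsilon}\|(u_0,u_1)\|_{\mathcal{H}_0},\qquad \epsilon\ge\epsilon_0 .
\]
So the whole task is to pass from $w$ to $v=v(q,(u_0,u_1))$, the solution of \eqref{O1} with $a=0$.

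The key step is to write $v=w+r$, where $r$ solves the IBVP with zero Cauchy data and source $f:=-qv$, i.e. $\partial_t^2 r-\Delta r=-qv$ in $M\times(0,\tau)$, $r=0$ on $\partial M\times(0,\tau)$, $r(\cdot,0)=\partial_t r(\cdot,0)=0$. Then $\partial_\nu w=\partial_\nu v-\partial_\nu r$ on $\Gamma\times(0,\tau)$, and substituting into the inequality above yields
\[
C\|(u_0,u_1)\|_{\mathcal{H}_{-1}}\le e^{\kappa\epsilon}\|\partial_\nu v\|_{L^2(\Gamma\times(0,\tau))}+e^{\kappa\epsilon}\|\partial_\nu r\|_{L^2(\Gamma\times(0,\tau))}+\tfrac{1}{\epsilon}\|(u_0,u_1)\|_{\mathcal{H}_0}.
\]
Now I would estimate $\|\partial_\nu r\|_{L^2(\Gamma\times(0,\tau))}$. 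By the hidden regularity/trace bound \eqref{O3} applied to $r$ (whose Cauchy data vanish and whose ``potential term'' is the source $-qv$), one gets $\|\partial_\nu r\|_{L^2(\partial M\times(0,\tau))}\le c_M\|qv\|_{L^1((0,\tau),L^2(M))}\le c_M\tau\|q\|_\infty\|v\|_{C([0,\tau],L^2(M))}$, and then \eqref{O2}–\eqref{O4} (with the bound $\|q\|_\infty\le N$, $a=0$) gives $\|v\|_{C([0,\tau],L^2(M))}\le C(N)\|(u_0,u_1)\|_{\mathcal{H}_0}$. Hence $\|\partial_\nu r\|_{L^2(\Gamma\times(0,\tau))}\le C_1\|(u_0,u_1)\|_{\mathcal{H}_0}$ with $C_1=C_1(N,M,\tau)$.

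Plugging this back in, the troublesome term becomes $e^{\kappa\epsilon}C_1\|(u_0,u_1)\|_{\mathcal{H}_0}$, which grows in $\epsilon$, so a direct substitution is not enough — this is the main obstacle. The remedy is to re-run the interpolation argument at a shifted parameter: apply the inequality with $\epsilon$ replaced by a larger parameter, or equivalently re-derive \cite[Corollary 3.2]{LL} for the perturbed equation by a Lebeau–Robbiano–type spectral cut. A cleaner route, and the one I would take, is to note that the perturbation estimate improves on a low-frequency subspace: decompose $(u_0,u_1)=P_\lambda(u_0,u_1)+(I-P_\lambda)(u_0,u_1)$ using the spectral projector $P_\lambda$ of $-\Delta$ (Dirichlet) onto frequencies $\le\lambda$; on the high-frequency part one already has $\|(I-P_\lambda)(u_0,u_1)\|_{\mathcal{H}_{-1}}\le\lambda^{-1}\|(u_0,u_1)\|_{\mathcal{H}_0}$ for free, and on the low-frequency part the perturbation term $qv$ is controlled with an extra gain, allowing the $e^{\kappa\epsilon}C_1\|(u_0,u_1)\|_{\mathcal{H}_0}$ contribution to be re-absorbed into $\tfrac1\epsilon\|(u_0,u_1)\|_{\mathcal H_0}$ after renaming constants ($\kappa\mapsto\kappa'$, $C\mapsto C'$). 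Since the statement only asks for the \emph{existence} of such constants $C,\kappa,\epsilon_0$ (all allowed to depend on $N$), this rescaling is harmless, and the proof concludes by collecting the estimates. The only genuinely delicate point is verifying that the perturbative term can indeed be moved to the weak-norm side with a controllable exponential weight; everything else is a routine application of \eqref{O2}, \eqref{O3} and \cite[Corollary 3.2]{LL}.
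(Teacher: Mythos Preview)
The paper does not give a proof of this theorem at all: it simply records it as a direct consequence of \cite[Corollary 3.2]{LL}. The point is that the Laurent--L\'eautaud result is already stated and proved for the wave equation \emph{with} a bounded potential, and their constants depend on $q$ only through $\|q\|_{L^\infty(M)}$; so inequality \eqref{O6} is literally what one reads off from their corollary under the hypothesis $\tau>2\mathbf{d}(\Gamma)$. No reduction to $q=0$ is needed.

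Your approach, by contrast, tries to deduce the $q\ne 0$ case from the $q=0$ case by a perturbation argument, and you correctly identify the obstruction: the remainder term $e^{\kappa\epsilon}\|\partial_\nu r\|_{L^2}\lesssim e^{\kappa\epsilon}\|(u_0,u_1)\|_{\mathcal{H}_0}$ blows up with $\epsilon$ and cannot be absorbed into $\tfrac1\epsilon\|(u_0,u_1)\|_{\mathcal{H}_0}$. The ``remedy'' you sketch, however, does not close. Splitting the data with the spectral projector $P_\lambda$ of $-\Delta$ gives the trivial high-frequency bound, but on the low-frequency piece nothing improves: $P_\lambda$ does not commute with the evolution $\partial_t^2-\Delta+q$, so $v$ does not decompose accordingly, and there is no ``extra gain'' on $\|\partial_\nu r\|$ coming from restricting the initial data to low frequencies. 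What makes \cite{LL} work is that their quantitative unique continuation / Carleman-type interpolation estimates absorb zero-order terms like $qv$ \emph{inside} the proof (this is the standard mechanism by which Carleman weights dominate lower-order perturbations), yielding constants uniform in $\|q\|_\infty\le N$ from the outset. That absorption cannot be reproduced a posteriori by the black-box perturbation you propose; one really has to invoke the result of \cite{LL} in its full form, as the paper does.
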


We now give an observability inequality for a parabolic equation. Consider then the IBVP
\begin{equation}\label{O7}
\left\{
\begin{array}{lll}
 \partial _t u - \Delta u + q(x)u = 0 \quad &\mbox{in}\;   M \times (0,\tau), 
 \\
u = 0 &\mbox{on}\;  \partial M \times (0,\tau), 
\\
u(\cdot ,0) = u_0.
\end{array}
\right.
\end{equation}

For $q\in L^\infty (M)$ let $A_q=\Delta -q$ with domain $D(A_q)=H_0^1(M)\cap H^2(M)$. As $A_0$ (that is $A_q$ with $q=0$) is an m-dissipative operator we deduce form the well established theory of continuous semigroups that $A_q$ generates a strongly continuous semigroup $e^{tA_q}$. Therefore, for any $u_0\in L^2(M)$, the IBVP has a unique solution
\[
u=u(q,u_0)=e^{tA_q}u_0\in C([0,\tau], L^2(M))\cap C^1(]0,\tau ],H^2(M)\cap H_0^1(M)).
\]
The perturbation argument  we used previously for the wave equation is in fact stated in general abstract setting 
 \cite[Proposition 6.3.3, page 189]{TW}, which is also applicable  for the heat equation. This together with \cite[Corollary 4]{LR} yield the following final time observability inequality.

\begin{theorem}\label{theorem-O3}
Let $\tau >0$, $\Gamma$ a non empty open subset of $\partial M$ and $\aleph>0$. There exists a constant $C >0$ so that for any $q\in L^\infty (M)$ satisfying $\|q\|_{L^2(M)}\le \aleph$ we have
\begin{equation}\label{O8}
\|u(\cdot ,\tau )\|_{L^2(M)}\le C\|\partial _\nu u\|_{L^2(\Gamma \times (0,\tau ))},
\end{equation}
where $u=u(q,u_0)$ with $u_0\in L^2(M)$.
\end{theorem}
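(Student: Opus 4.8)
The plan is to obtain the final time observability inequality \eqref{O8} for the heat equation with potential $q$ by combining the known observability inequality for the pure heat semigroup (the case $q=0$) with the abstract perturbation argument of \cite[Proposition 6.3.3, page 189]{TW}, exactly as was done for the wave equation in Theorem \ref{theorem-O1}. First I would record the starting point: by \cite[Corollary 4]{LR}, the semigroup $e^{tA_0}$ generated by $A_0=\Delta$ satisfies a final time observability estimate of the form $\|u(\cdot,\tau)\|_{L^2(M)}\le C_0\|\partial_\nu u\|_{L^2(\Gamma\times(0,\tau))}$ for solutions of \eqref{O7} with $q=0$. The boundary trace $\partial_\nu u$ is the relevant observation operator here, and one needs the (standard) fact that for $u_0\in L^2(M)$ the parabolic smoothing makes $\partial_\nu u$ a well-defined element of $L^2(\Gamma\times(0,\tau))$ with a bound in terms of $\|u_0\|_{L^2(M)}$, so that the observation operator is admissible in the sense required by the abstract framework.

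Next I would set up the perturbation. Write $A_q=A_0-q$, regard multiplication by $q$ as a bounded perturbation of $A_0$ on $L^2(M)$ with norm controlled by $\|q\|_{L^\infty(M)}$, and apply the abstract result \cite[Proposition 6.3.3, page 189]{TW}: there is $\beta>0$ such that whenever $\|q\|_{L^\infty(M)}\le\beta$, the perturbed semigroup $e^{tA_q}$ is still finally observable through the same observation operator, with a constant $\kappa=\kappa(\beta)>0$ depending only on the bound. This is the heat analogue of the first perturbed inequality established for the wave equation in the text. Then, to pass from the smallness condition $\|q\|_{L^\infty}\le\beta$ to an arbitrary bound $\|q\|_{L^2(M)}\le N$, I would iterate: split the interval, or rather split the potential, into $\lfloor N/\beta\rfloor+1$ pieces and chain the estimates, again mirroring the argument already carried out for the wave equation. (Here one should note that on the compact manifold $M$ the hypothesis $\|q\|_{L^2(M)}\le N$ together with the a priori regularity used elsewhere reduces to an $L^\infty$-type control on the perturbation; if strictly only the $L^2$ bound is available the iteration is performed directly on the resolvent/semigroup estimates, the parabolic smoothing absorbing the lower-order term.)

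Finally I would assemble the pieces: combining the iterated perturbed observability estimate with the admissibility bound for $\partial_\nu u$ yields \eqref{O8} with a constant $\kappa>0$ depending only on $\tau$, $\Gamma$, $M$ and $N$, which is the assertion of Theorem \ref{theorem-O3}. The main obstacle I anticipate is not the iteration — that is routine once the base case is in hand — but verifying the hypotheses of the abstract perturbation proposition in the parabolic setting: one must check that the observation operator $u_0\mapsto \partial_\nu e^{tA_0}u_0$ is admissible and that \cite[Corollary 4]{LR} is genuinely a final time observability estimate in the form the abstract machinery requires (as opposed to an estimate integrated in time, or with $L^2((0,\tau),L^2(M))$ on the left), and then that the perturbation $q$ enters in a way compatible with the functional setting. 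Once these compatibility checks are made, the conclusion follows as in the wave case.
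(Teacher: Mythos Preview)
Your proposal is correct and follows essentially the same route as the paper: the paper simply remarks that the abstract perturbation argument of \cite[Proposition 6.3.3]{TW}, already used for the wave equation, applies in the parabolic setting and, combined with the Lebeau--Robbiano observability result \cite[Corollary 4]{LR} for $q=0$, yields \eqref{O8}. Your identification of the admissibility check and of the $L^2$ versus $L^\infty$ discrepancy in the hypothesis is apt; the paper does not elaborate on either point, so your write-up is in fact more careful than the original.
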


\subsection{The wave equation in a rectangular domain with boundary damping}

Consider on $\Omega = (0,1)\times (0,1)$ the IBVP
\begin{equation}\label{O9}
\left\{
\begin{array}{lll}
 \partial _t^2 u - \Delta u = 0 \quad &\mbox{in}\;   \Omega \times (0,\tau), 
 \\
u = 0 &\mbox{on}\;  \Gamma _0 \times (0,\tau), 
\\
\partial _\nu u +a\partial _tu=0 &\mbox{on}\;  \Gamma _1 \times (0,\tau), 
\\
u(\cdot ,0) = u_0,\; \partial_t u (\cdot ,0) = u_1.
\end{array}
\right.
\end{equation}
Here 
\begin{align*}
&\Gamma _0=((0,1)\times \{1\})\cup (\{1\}\times (0,1)),
\\
&\Gamma _1=((0,1)\times \{0\})\cup (\{0\}\times (0,1))
\end{align*}
and $\partial _\nu =\nu \cdot \nabla$ is the derivative along $\nu$, the unit normal vector pointing outward of $\Omega$. Note that $\nu$ is everywhere defined except at the vertices of $\Omega$.

We identify in the sequel $a|_{(0,1)\times \{0\}}$ by $a_1=a_1(x)$, $x\in (0,1)$ and $a|_{\{0\}\times (0,1)}$ by $a_2=a_2(y)$, $y\in (0,1)$. In that case it is natural to identify $a$, defined on $\Gamma _1$, by the pair $(a_1,a_2)$.

Fix $1/2<\alpha \le 1$ and let
\[
\mathscr{A}=\{ b=(b_1,b_2)\in C^\alpha ([0,1])\oplus C^\alpha ([0,1]),\; b_1(0)=b_2(0),\; b_j\geq 0\}. 
\]

Let $V=\{ u\in H^1(\Omega );\; u=0\; \textrm{on}\; \Gamma _0\}$ and define on $V\oplus L^2(\Omega )$ the unbounded operator $A_a$, $a\in \mathscr{A}$,  by
\[
A_a= (w,\Delta v),\quad D(A_a)=\{ (v,w)\in V\oplus V;\; \Delta v\in L^2(\Omega )\; \textrm{and}\; \partial _\nu v=-aw\; \textrm{on}\; \Gamma _1\}.
\]
From \cite{AC3} $A_a$ generates a strongly continuous semigroup $e^{tA_a}$. Whence, for any $(u_0,u_1)\in D(A_a)$, the IBVP \eqref{O9} has a solution $u=u(a,(u_0,u_1))$ so that
\[
(u,\partial _tu)\in C([0,\tau ],D(A_a))\cap C^1([0,\tau ],V\oplus L^2(\Omega )).
\]

We proved in \cite[Corollary 2.2]{AC3} the following observability inequality
\begin{theorem}\label{theorem-O4}
Fix $0<\delta _0 <\delta _1$. Then there exist $\tau _0>0$ and $\kappa>0$, depending only on $\delta _0$ and $\delta_1$, so that for any $\tau \geq \tau _0$ and $a\in \mathscr{A}$ satisfying  $\delta _0 \le a \le \delta _1$  on $\Gamma _1$ we have
\[
 \kappa \| (u_0,u_1)\|_{V\oplus L^2(\Omega )}\leq \|\partial _\nu u\|_{L^2(\Gamma _1\times (0,\tau))},
\]
where $u=u(a,(u_0,u_1))$, with $(u_0,u_1)\in D(A_a)$.
\end{theorem}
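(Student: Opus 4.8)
\emph{Strategy.} The plan is to reduce the stated inequality to one uniform energy-decay step with constants depending only on $\delta_0,\delta_1$, and to obtain that step from a single Rellich--Pohozaev multiplier identity; this route needs neither a geometric control condition nor a compactness argument, and it makes the uniformity in $a$ transparent. For $(u_0,u_1)\in D(A_a)$ write $u=u(a,(u_0,u_1))$ and $E(t)=\tfrac12\int_\Omega(|\partial_tu|^2+|\nabla u|^2)\,dx$. Multiplying the equation by $\partial_tu$, integrating over $\Omega$ and using $u=0$ on $\Gamma_0$ together with $\partial_\nu u=-a\partial_tu$ on $\Gamma_1$ gives $E'(t)=-\int_{\Gamma_1}a|\partial_tu|^2\,d\sigma\le0$; hence $E$ is nonincreasing and $\int_0^\tau\!\int_{\Gamma_1}a|\partial_tu|^2\,d\sigma\,dt=E(0)-E(\tau)$. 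Since $\partial_\nu u=-a\partial_tu$ on $\Gamma_1$ and $a^2\ge\delta_0a$ there,
\[
\|\partial_\nu u\|_{L^2(\Gamma_1\times(0,\tau))}^2=\int_0^\tau\!\!\int_{\Gamma_1}a^2|\partial_tu|^2\,d\sigma\,dt\ge\delta_0\big(E(0)-E(\tau)\big),
\]
and, by Poincar\'e's inequality on $V$ (here $\Gamma_0$ has positive measure and is fixed), $E(0)$ is equivalent to $\|(u_0,u_1)\|_{V\oplus L^2(\Omega)}^2$. So it is enough to produce $\tau_0>0$ and $c_0\in(0,1)$, depending only on $\delta_0,\delta_1$, with $E(\tau_0)\le(1-c_0)E(0)$: for $\tau\ge\tau_0$ one then has $E(0)-E(\tau)\ge E(0)-E(\tau_0)\ge c_0E(0)$, whence the claim with $\kappa^2$ proportional to $\delta_0c_0$.

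\emph{Multiplier step.} I would take the vector field $m(x)=x-(1,1)$, for which a direct check gives $m\cdot\nu=0$ on $\Gamma_0$ and $m\cdot\nu\equiv1$ on $\Gamma_1$, together with the multiplier $m\cdot\nabla u+\tfrac12u$ (the weight $\tfrac12=\tfrac{n-1}2$, $n=2$, cancels the interior $\int|\nabla u|^2$). Multiplying the equation by $m\cdot\nabla u+\tfrac12u$ and integrating over $\Omega\times(0,T)$ produces
\[
\int_0^TE\,dt=-\Big[\int_\Omega\partial_tu\,(m\cdot\nabla u+\tfrac12u)\Big]_0^T+\tfrac12\int_0^T\!\!\int_{\partial\Omega}(m\cdot\nu)(|\partial_tu|^2-|\nabla u|^2)+\int_0^T\!\!\int_{\partial\Omega}\partial_\nu u\,(m\cdot\nabla u+\tfrac12u).
\]
On $\Gamma_0$ all boundary integrals vanish, since $m\cdot\nu=0$ and $u=0$ (so the tangential gradient is zero and $m\cdot\nabla u=0$). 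On $\Gamma_1$ I would decompose $\nabla u=(\partial_\nu u)\nu+\nabla_\tau u$, substitute $\partial_\nu u=-a\partial_tu$, and observe that $\int_{\Gamma_1}a\,\partial_tu\,u=\tfrac12\partial_t\!\int_{\Gamma_1}au^2$ (as $a$ does not depend on $t$) only contributes endpoint terms; what remains is
\[
\tfrac12\int_0^T\!\!\int_{\Gamma_1}(1+a^2)|\partial_tu|^2-\tfrac12\int_0^T\!\!\int_{\Gamma_1}|\nabla_\tau u|^2-\int_0^T\!\!\int_{\Gamma_1}a\,\partial_tu\,(m\cdot\nabla_\tau u)+(\text{endpoint terms}).
\]
Since $|m|\le\sqrt2$ on $\overline\Omega$ and $a\le\delta_1$, Young's inequality dominates the cross term by $\tfrac14\int\!\int_{\Gamma_1}|\nabla_\tau u|^2+2\delta_1^2\int\!\int_{\Gamma_1}|\partial_tu|^2$, so the tangential term keeps a negative coefficient and is discarded, while the endpoint terms (Cauchy--Schwarz, Poincar\'e and the trace $V\hookrightarrow L^2(\Gamma_1)$) are bounded by $C_1E(0)$ with $C_1$ depending only on $\delta_1$ and $\Omega$. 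This yields
\[
\int_0^TE\,dt\le C_1E(0)+C_2\int_0^T\!\!\int_{\Gamma_1}|\partial_tu|^2\,d\sigma\,dt,\qquad C_2=C_2(\delta_1).
\]

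\emph{Closing the loop.} From $E(0)=E(t)+\int_0^t\!\int_{\Gamma_1}a|\partial_tu|^2\,d\sigma\,ds\le E(t)+\delta_1\int_0^T\!\int_{\Gamma_1}|\partial_tu|^2\,d\sigma\,ds$ for all $t\in(0,T)$, integration in $t$ over $(0,T)$ gives $TE(0)\le\int_0^TE\,dt+T\delta_1\int_0^T\!\int_{\Gamma_1}|\partial_tu|^2\,d\sigma\,ds$. Combining with the multiplier estimate and choosing $T=\tau_0:=2C_1$ absorbs the $C_1E(0)$ term and leaves $E(0)\le C_6\int_0^{\tau_0}\!\int_{\Gamma_1}|\partial_tu|^2\,d\sigma\,dt$ with $C_6=C_6(\delta_1)$; hence $E(0)-E(\tau_0)=\int_0^{\tau_0}\!\int_{\Gamma_1}a|\partial_tu|^2\ge\delta_0\int_0^{\tau_0}\!\int_{\Gamma_1}|\partial_tu|^2\ge(\delta_0/C_6)E(0)$, i.e. $E(\tau_0)\le(1-\delta_0/C_6)E(0)$, the desired contraction with $c_0=\delta_0/C_6\in(0,1)$. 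Substituting this into the first step finishes the argument, with $\tau_0$ and $\kappa$ depending only on $\delta_0,\delta_1$ and the fixed geometry of $\Omega$.

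\emph{Main obstacle.} The integrations by parts above are legitimate only for solutions smooth enough up to $\partial\Omega$, and $\Omega$ has four corners where the type of the boundary condition changes, so in general $u(t)\notin H^2(\Omega)$ near the vertices and the trace of $\nabla_\tau u$ on $\Gamma_1$ is at the borderline of definition. I would deal with this by deriving the identities first for data in a dense subspace of $D(A_a)$ on which the solution is regular, and then passing to the limit, the passage being justified by a hidden-regularity bound $\|\partial_\nu u\|_{L^2(\Gamma_1\times(0,T))}\le C\|(u_0,u_1)\|_{V\oplus L^2(\Omega)}$ proved by a companion multiplier computation in the spirit of \eqref{O3}. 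This corner analysis is the technical heart of the proof --- it is carried out in \cite{AC3}, and it is precisely what the compatibility condition $b_1(0)=b_2(0)$ and the $C^\alpha$ regularity in the definition of $\mathscr{A}$ are for; note that once it is in place the uniformity over the admissible class of dampings costs nothing extra, since no bound on $\|a\|_{C^\alpha}$ ever enters the constants $\tau_0$, $C_1$, $C_2$, $C_6$.
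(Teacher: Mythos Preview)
Your proof is essentially correct and takes a genuinely different, more explicit route than the paper. The paper does not give a proof here: it simply cites \cite{AC3} and remarks afterwards that the inequality ``is a direct consequence of the fact that the observation is made on $\Gamma_1$ which fulfil[s] the geometric control condition for $\tau\ge\tau_0$,'' i.e.\ it appeals to the abstract GCC/observability machinery (of the same flavour as Theorem~\ref{theorem-O1}) together with a perturbation argument. Your argument instead carries out a direct Rellich--Pohozaev multiplier computation with $m(x)=x-(1,1)$, exploiting the exact geometric facts $m\cdot\nu=0$ on $\Gamma_0$ and $m\cdot\nu=1$ on $\Gamma_1$, and then closes via a standard energy-decay contraction. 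The algebra you describe on $\Gamma_1$ --- substituting $\partial_\nu u=-a\partial_t u$, splitting $\nabla u$ into normal and tangential parts, absorbing the cross term by Young so that the $|\nabla_\tau u|^2$ contribution keeps a negative sign, and pushing the $a\,u\,\partial_t u$ piece into endpoint terms --- checks out, and the ``closing the loop'' step is the classical argument that turns an integrated-energy estimate into uniform exponential decay.

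What your approach buys is transparency of the constants: $\tau_0$, $C_1$, $C_2$, $C_6$ are produced explicitly from $\delta_0,\delta_1$ and the fixed geometry, with no microlocal analysis and no compactness--uniqueness step, so the uniformity over $a\in\mathscr{A}$ with $\delta_0\le a\le\delta_1$ is immediate. The GCC route, by contrast, is softer and more general, but the dependence of $\kappa$ on the damping has to be tracked through a perturbation argument. The one genuine technical burden in your approach --- which you correctly isolate --- is justifying the multiplier identities and the trace of $\nabla_\tau u$ on $\Gamma_1$ in the presence of the four corners where the boundary condition changes type; this is indeed the substance of \cite{AC3}, and your plan (prove on a regular dense subset of $D(A_a)$, pass to the limit via a hidden-regularity bound) is the right one. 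With that caveat, your sketch is a valid alternative proof.
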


It is worth noticing that $\Gamma_1$ satisfies the geometric control condition given in the multiplier method. We also point out that a special case was considered by the third author and Ren \cite{RT} in which the observation is made only on one side of $\Gamma_1$.

\section{Weighted interpolation inequalities}

We aim in the present section establishing two weighted interpolation inequalities. These inequalities will be useful in the proof of H\"older stability estimates for certain inverse problems we discuss in the coming sections.

As in the preceding section $M$ is a compact $n$-dimensional Riemannian manifold with boundary. 

Consider the Hardy's inequality 
\begin{equation}\label{wii1}
\int_M|\nabla f(x)|^2dV \geq c\int_M \frac{|f(x)|^2}{d(x,\partial M)^2}dV,\quad f\in H_0^1(M),
\end{equation}
for some constant $c>0$, where $dV$ is the volume form on $M$, $d$ is the geodesic distance introduced 
previously and $d(\cdot ,\partial M)$ is the distance to $\partial M$.

Define $r_x(v)=\inf \{|t|;\; \gamma_{x,v}(t)\not\in M\}$, where $\gamma_{x,v}$ is the geodesic satisfying the initial condition $\gamma_{x,v}=x$ and $\dot{\gamma}_{x,v}=v$. It was observed in \cite{Ra} that Hardy's inequality \eqref{wii1} holds for any open subset $\mathcal{O}$, of a complete Riemannian manifold, whenever $\mathcal{O}$ has the following uniform interior cone property: there are an angle $\alpha >0$ and a constant $c_0>0$ so that, for any $x\in M$, there exists an $\alpha$-angled cone $C_x\subset T_xM$ $^[$\footnote{Here  $C_x=\{ Y\in T_xM;\; \angle \langle X,Y\rangle <\alpha \}$, for some $X\in T_xM$.}$^]$ with the property that $r_x(v)\le c_0d(x,\partial M)$, for all $v\in C_x$. The proof of this result follows the method by Davies \cite[page 25]{Da} for the flat case. Since in our case $M$ is a compact Riemannian manifold, it is obvious that it satisfies the uniform interior cone property. Then slight modifications of the proof in \cite{Ra} show that Hardy's inequality is satisfied for any compact Riemannian manifold.

It is worth mentioning that Hardy's inequality holds for any bounded Lipschitz domain of $\mathbb{R}^n$ with constant $c\le 1/4$, with equality if and only if $\Omega$ is convex.

The following Hopf's maximum principle is a key ingredient in establishing our first weighted interpolation inequality.

\begin{lemma}\label{lemma-wii1}
Let $q\in C(M)$ and  $u\in C^2(M)\cap H_0^1(M)$ satisfying $q\le 0$ and $\Delta u+qu\le 0$. If $u$ is non identically equal to zero then $u>0$ in $M$ and $\partial _\nu u(y)=\langle \nabla u(y),\nu (y)\rangle >0$ for any $y\in \partial M$.
\end{lemma}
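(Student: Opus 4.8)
The plan is to deduce the statement from the classical maximum principles for second-order uniformly elliptic operators, applied in local coordinates. Recall that in this paper $\nu$ is the \emph{inward} unit normal, so the conclusion $\partial_\nu u(y)>0$ is precisely the usual Hopf boundary inequality for a nonnegative supersolution that vanishes on $\partial M$. Set $L=\Delta+q$. In any local chart the Laplace–Beltrami operator reads $\Delta u=g^{ij}\partial_i\partial_j u+(\text{lower order})$, with coefficients $g^{ij}$ that are $C^\infty$ and, since $M$ is compact, bounded together with their derivatives and uniformly positive definite; hence $L$ is uniformly elliptic with bounded coefficients, its zeroth-order coefficient $q$ being continuous (a fortiori bounded) and $\le0$. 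Finally, as $u\in C^2(M)\cap H_0^1(M)$ is continuous up to $\partial M$, its trace shows $u=0$ on $\partial M$.

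First I would show $u\ge0$ on $M$. If not, $u$ attains a strictly negative minimum at some point; this point cannot be on $\partial M$, where $u=0$, so it is interior. Covering $M$ by charts and applying the strong maximum principle (as in Gilbarg–Trudinger) to the inequality $Lu\le0$ with $q\le0$, an interior minimum with nonpositive value forces $u$ to be locally constant; by connectedness of $M$, $u$ is then a negative constant on all of $M$, contradicting $u=0$ on $\partial M$. Hence $u\ge0$. Next, assume $u(x_0)=0$ for some $x_0\in\ring{M}$. Then $x_0$ is an interior minimum point with nonpositive value, and the strong maximum principle again yields $u\equiv0$ on $M$, contradicting the hypothesis that $u$ is not identically zero. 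Therefore $u>0$ in $\ring{M}$.

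It remains to treat the boundary derivative. Fix $y\in\partial M$. Since $\partial M$ is a $C^\infty$ hypersurface and $M$ is compact, $M$ satisfies an interior sphere condition at $y$ (work in boundary normal coordinates near $y$, where the metric is smooth and the boundary is flattened). We have $u(y)=0<u(x)$ for all $x\in\ring{M}$, $q\le0$, and $u$ is $C^2$ up to $y$, so the classical Hopf boundary point lemma applies to the supersolution $u$ of $Lu\le0$: the derivative of $u$ along the outward normal at $y$ is strictly negative, which is exactly $\partial_\nu u(y)=\langle\nabla u(y),\nu(y)\rangle>0$ for the inward normal $\nu$.

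The only step requiring genuine care — the \textbf{main obstacle} — is the transfer of the Euclidean statements (weak and strong maximum principles, Hopf boundary point lemma) to the Riemannian setting: one must verify that in suitable local coordinates near each interior and each boundary point the metric produces a uniformly elliptic operator with bounded coefficients, and that the interior sphere condition holds. Both facts follow at once from the smoothness of $g$, the smoothness of $\partial M$, and the compactness of $M$; once this localization is set up, the three classical results are invoked verbatim and the proof is complete.
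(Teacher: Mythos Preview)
Your proposal is correct and follows essentially the same approach as the paper: both reduce the lemma to the classical strong maximum principle and Hopf boundary point lemma from Gilbarg--Trudinger, the only issue being the adaptation to the Riemannian setting. The paper handles that adaptation by replacing the tangent Euclidean ball in Hopf's lemma with a tangent geodesic ball (citing the construction in Pucci--Serrin), whereas you pass to local (boundary normal) coordinates and invoke the Euclidean statements directly; these are two standard and equivalent ways to carry out the same transfer.
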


\begin{proof}
Similar to that of \cite[Lemma 3.4, page 34 and Theorem 3.5, page 35]{GT}. The tangent ball in the classical Hopf's lemma is substitute by a tangent geodesic ball (see the construction in \cite[Proof of Theorem 9.2, page 51]{PS}).
\end{proof}

\begin{proposition}\label{proposition-wii1}
Let $q\in C(M)$ and  $u\in C^2(M)\cap H_0^1(M)$ satisfying $q\le 0$ and $\Delta u+qu\le 0$.
If $u$ is  non identically equal to zero then
\[
u(x)\ge c_ud(x,\partial M),\quad x\in M,
\]
where the constant $c_u$ only depends on $u$ and $M$.
\end{proposition}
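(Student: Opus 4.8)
The plan is to combine the pointwise positivity of $u$ away from the boundary (a direct consequence of Lemma~\ref{lemma-wii1}) with the strict positivity of the inward normal derivative on $\partial M$, and then to trade these two facts against each other near the boundary using compactness. First I would invoke Lemma~\ref{lemma-wii1}: since $u$ is not identically zero, $u>0$ in $\ring M$ and $\partial_\nu u(y)>0$ for every $y\in\partial M$. Because $u\in C^2(M)$ and $\partial M$ is compact, $y\mapsto\partial_\nu u(y)$ is continuous and hence attains a strictly positive minimum $m>0$ on $\partial M$.

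Next I would set up a collar neighbourhood of the boundary. Fix a tubular neighbourhood $U_{\rho_0}=\{x\in M:\ d(x,\partial M)<\rho_0\}$ of $\partial M$ on which the normal exponential map is a diffeomorphism, so that each $x\in U_{\rho_0}$ is written uniquely as $x=\exp_{y}(t\nu(y))$ with $y=y(x)\in\partial M$ the nearest boundary point and $t=d(x,\partial M)$. Along the unit-speed normal geodesic $s\mapsto\gamma(s)=\exp_y(s\nu(y))$ we have $\gamma(0)=y$, $u(\gamma(0))=0$, $\frac{d}{ds}u(\gamma(s))\big|_{s=0}=\partial_\nu u(y)\ge m$, and $s\mapsto u(\gamma(s))$ is $C^2$ with second derivative bounded in absolute value by some constant $K$ depending only on the $C^2$ norm of $u$ and on $M$ (uniformly over $y\in\partial M$ and $s\in[0,\rho_0]$, shrinking $\rho_0$ if needed). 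Taylor's formula then gives $u(x)=u(\gamma(t))\ge m\,t-\tfrac{K}{2}t^2\ge\tfrac{m}{2}t=\tfrac{m}{2}d(x,\partial M)$ provided $t\le\rho_0$ is chosen small enough that $Kt\le m$. This yields the desired inequality, with a suitable constant, for all $x$ in the collar $U_{\rho_1}$ with $\rho_1=\min(\rho_0,m/K)$.

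It remains to handle the interior region $M\setminus U_{\rho_1}$, which is compact; there $u$ is continuous and strictly positive by Lemma~\ref{lemma-wii1}, hence $u\ge\mu>0$ on it, while $d(x,\partial M)\le\operatorname{diam}(M)$, so $u(x)\ge\frac{\mu}{\operatorname{diam}(M)}d(x,\partial M)$ there. Taking $c_u$ to be the minimum of $\tfrac{m}{2}$ and $\tfrac{\mu}{\operatorname{diam}(M)}$ completes the argument; note $c_u$ depends only on $u$ (through $m$ and $\mu$) and on $M$.

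The main technical point — the step I expect to require the most care — is the uniform quadratic control of $u$ along the family of normal geodesics near $\partial M$: one must check that the collar $U_{\rho_0}$ exists with uniform $\rho_0$, that the nearest-point projection $x\mapsto y(x)$ is well defined and smooth there, and that the second derivative $\frac{d^2}{ds^2}u(\gamma_y(s))$ is bounded uniformly in $y\in\partial M$. All of this follows from compactness of $\partial M$ together with $u\in C^2(M)$ and smoothness of the metric, but it is where the geometry (as opposed to the soft maximum-principle input) actually enters.
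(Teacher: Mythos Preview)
Your proposal is correct and follows essentially the same approach as the paper: invoke Lemma~\ref{lemma-wii1} to get $u>0$ in the interior and $\partial_\nu u\ge m>0$ on $\partial M$, use a Taylor expansion of $u$ along the normal geodesic from the nearest boundary point to handle the collar, and finish on the compact interior region by the positive minimum of $u$. The paper's write-up is slightly more bare-handed (it takes a minimizing geodesic to the nearest boundary point rather than setting up the full tubular neighbourhood), but the argument is the same.
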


\begin{proof}
Let $0<\epsilon $ to be specified later. Let $x\in M$ so that $d(x,\partial M)\le \epsilon$ and $y\in \partial M$ satisfying $d(x,\partial M )=d(x,y)$. Since $M$ is complete there exist a unit speed minimizing geodesic $\gamma : [0,r]\rightarrow M$ such that $\gamma (0)=y$, $\gamma (r)=x$ and $\dot{\gamma}(0)=\nu (y)$, where we set $r=d(x,\partial M)$ (see for instance \cite[page 150]{Pe}).

Define $\phi(t)=u(\gamma (t))$. Then
\begin{align*}
&\phi '(t)=du(\gamma (t))(\dot{\gamma}(t))
\\
&\phi''(t)=d^2u(\gamma (t))(\dot{\gamma}(t),\dot{\gamma}(t))+du(\gamma (t))(\ddot{\gamma}(t)).
\end{align*}
Here $\dot{\gamma}(t)=\dot{\gamma}^i(t)\partial _i\in T_{\gamma (t)}$. Observe that by the geodesic equation
\[
\ddot{\gamma}^k(t)=-\dot{\gamma}^i(t)\dot{\gamma}^j(t)\Gamma_{ij}^k(\gamma (t)),
\]
where $\Gamma_{ij}^k$ are the Christoffel symbols associated to the metric $g$.

We get by taking into account that $\phi '(0)=du(y)(\nu (y))=\langle \nabla u(y),\nu (y)\rangle =\partial _\nu u(y)$
\[
\phi (r)=r\partial _\nu u(y)+\frac{r^2}{2}\phi ''(st),
\]
for some $0<s<1$. Hence there exists $c>0$ depending on $u$ and $M$ so that
\[
\phi(r)\ge 2r\eta -cr^2\ge r\eta + r(\eta -c\epsilon),
\]
with $2\eta = \min_{y\in\Gamma} \partial _\nu u(y) >0$ (by the compactness of $\partial M$ and Lemma \ref{lemma-wii1}). Thus
\[
\phi (r)\ge r\eta 
\]
provided that $\epsilon \le \eta /c$. In other words we proved
\begin{equation}\label{wii2}
u(x)=\phi (r )\ge r \eta =\eta d(x,\partial M ).
\end{equation}

On the other hand an elementary compactness argument yields, where  $M^\epsilon =\{x\in M;\; d(x,\partial M )\ge \epsilon\}$, 
\begin{equation}\label{wii3}
u(x)\ge \min_{z\in M^\epsilon}u(z)\ge \frac{\min_{z\in M^\epsilon}u(z)}{\max_{z\in M^\epsilon}d(z,\partial M )}d(x,\partial M),\quad x\in M^\epsilon .
\end{equation}
In light of \eqref{wii2} and \eqref{wii3} we end up getting
\[
u(x)\ge c_u d(x,\partial M ),\quad x\in M.
\]
The proof is then complete.
\end{proof}

A consequence of Proposition \ref{proposition-wii1} is the following corollary.

\begin{corollary}\label{corollary-wii1}
Let $q\in C(M)$, $q\le 0$, and $u\in C^2(M) \cap H_0^1(M)$ non identically equal to zero satisfying $\Delta u+qu\le 0$. There exists a constant $C_u$ only depending on $u$ and $M$ so that we have
\[
\|f\|_{L^2(M)}\le C_u\|fu\|_{L^2(M)}^{\frac{1}{2}}\|f\|_{H^2(M)}^{\frac{1}{2}}
\]
for any $f\in H^2(M)$.
\end{corollary}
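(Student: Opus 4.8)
The plan is to pass $\|f\|_{L^2(M)}^2$ through the Dirichlet integral of the product $fu$, going down with Hardy's inequality and coming back up with Green's formula. First I would record that, since $u\in C^2(M)$ and $M$ is compact, the functions $u$, $\nabla u$ and $\Delta u$ are bounded on $M$; in particular multiplication by $u$ maps $H^2(M)$ into itself and, because $u_{|\partial M}=0$, the function $w:=fu$ lies in $H^2(M)\cap H_0^1(M)$. Applying Hardy's inequality \eqref{wii1} to $w$ and then inserting the pointwise bound $u(x)\ge c_u d(x,\partial M)$ from Proposition \ref{proposition-wii1}, which forces $u^2/d(\cdot,\partial M)^2\ge c_u^2$ on $M$, I get
\[
\int_M|\nabla(fu)|^2\,dV\ \ge\ c\int_M\frac{|fu|^2}{d(x,\partial M)^2}\,dV\ =\ c\int_M|f|^2\,\frac{u^2}{d(x,\partial M)^2}\,dV\ \ge\ c\,c_u^2\,\|f\|_{L^2(M)}^2 .
\]

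Next I would bound the same Dirichlet integral from above. Since $w=fu\in H^2(M)\cap H_0^1(M)$, Green's formula gives $\int_M|\nabla(fu)|^2\,dV=-\int_M(fu)\,\Delta(fu)\,dV$, the boundary term dropping out because $fu=0$ on $\partial M$; Cauchy--Schwarz then yields $\int_M|\nabla(fu)|^2\,dV\le\|fu\|_{L^2(M)}\,\|\Delta(fu)\|_{L^2(M)}$. Using the Leibniz identity $\Delta(fu)=u\,\Delta f+2\langle\nabla f,\nabla u\rangle+f\,\Delta u$ and the boundedness of $u$, $\nabla u$, $\Delta u$, one obtains $\|\Delta(fu)\|_{L^2(M)}\le C\|f\|_{H^2(M)}$ with $C$ depending only on $M$ and $u$. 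Combining the two estimates gives
\[
c\,c_u^2\,\|f\|_{L^2(M)}^2\ \le\ C\,\|fu\|_{L^2(M)}\,\|f\|_{H^2(M)},
\]
and a square root finishes the proof with $C_u=\bigl(C/(c\,c_u^2)\bigr)^{1/2}$.

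The one point that needs genuine care --- and the main, albeit modest, obstacle --- is justifying that $fu\in H^2(M)\cap H_0^1(M)$ with $fu=0$ on $\partial M$, so that Green's formula can be applied with no boundary contribution. This is precisely where the hypothesis $u\in C^2(M)$, $u_{|\partial M}=0$, is used: multiplication by $u$ is bounded on $H^2(M)$, and the trace operator satisfies $\gamma_0(fu)=\gamma_0(f)\,u_{|\partial M}=0$. All the remaining steps are direct and quantitative.
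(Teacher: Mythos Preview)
Your proof is correct and follows essentially the same route as the paper's: both use Proposition~\ref{proposition-wii1} together with Hardy's inequality to bound $\|f\|_{L^2(M)}^2$ by a constant times $\int_M|\nabla(fu)|^2\,dV$, and both then estimate this Dirichlet integral by $\|fu\|_{L^2}\|f\|_{H^2}$ (up to constants).

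The only difference is in this last step. The paper simply quotes the standard Sobolev interpolation inequality
\[
\|fu\|_{H^1(M)}\le C\|fu\|_{L^2(M)}^{1/2}\|fu\|_{H^2(M)}^{1/2}
\]
and then absorbs $\|fu\|_{H^2(M)}\le C\|f\|_{H^2(M)}$. You instead argue directly via Green's formula $\int_M|\nabla(fu)|^2=-\int_M(fu)\Delta(fu)$ and Cauchy--Schwarz, which is a slightly more elementary and self-contained way of obtaining the same product-form bound; it also makes explicit the reason the boundary term vanishes ($fu\in H^2\cap H_0^1$), a point the paper uses for Hardy as well but does not spell out. Either route is perfectly adequate here.
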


\begin{proof}
By Proposition \ref{proposition-wii1} $u(x)\ge c_ud(x,\partial M )$. Therefore
\[
\int_{M} f(x)^2dV(x)\le c_u^{-1} \int_{M}\frac{f(x)^2u(x)^2}{d(x,\partial M)^2}dV(x).
\]
Combined with Hardy's inequality \eqref{wii1} this estimate gives
\begin{equation}\label{wii4}
\int_{M} f(x)^2dV(x)\le c_u^{-1}c \int_{M}|\nabla (fu)(x)|^2dV(x).
\end{equation}
But from usual interpolation inequalities we have
\[
\|fu\|_{H^1(M)}\le C\| fu\|_{L^2(M)}^{\frac{1}{2}}\|fu\|_{H^2(M)}^{\frac{1}{2}},
\]
where the constant $C$ only depends on $M$.

Whence \eqref{wii4} implies
\[
\|f\|_{L^2(M)}\le C_u\|fu\|_{L^2(M)}^{\frac{1}{2}}\|f\|_{H^2(M)}^{\frac{1}{2}},
\]
which is the expected inequality
\end{proof}

Let $0\le q\in C^1 (M)$ be fixed and consider the operator $A=-\Delta+q$ with domain $D(A)=H^2(M) \cap H_0^1(M)$. An extension of \cite[Theorem 8.38, page 214]{GT} to a compact Riemannian manifold with boundary shows that the first eigenvalue of $A$, denoted by $\lambda _1$, is simple and has a positive eigenfunction. Let then $\phi _1\in C^2 (M)$ (by elliptic regularity) be the unique first eigenfunction satisfying $\phi_1 >0$ and normalized by $\|\phi _1\|_{L^2(M)}=1$. Since $\Delta \phi _1-q\phi_1 =-\lambda _1\phi_1$ the Hopf's maximum principle is applicable to $\phi_1$. Therefore a particular weight in the preceding corollary is obtained by taking $u=\phi _1$.

\begin{corollary}\label{corollary-wii2}
There exists a constant $c>0$, depending on $\phi_1$, so that  we have
\begin{equation}\label{wii5}
\|f\|_{L^2(M)}\le c\|f\phi _1\|_{L^2(M)}^{\frac{1}{2}}\|f\|_{H^2(M)}^{\frac{1}{2}}
\end{equation}
for any $f\in H^2(M)$.
\end{corollary}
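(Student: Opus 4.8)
The plan is to derive Corollary \ref{corollary-wii2} as an immediate specialization of Corollary \ref{corollary-wii1}. The only thing that needs to be checked is that the first eigenfunction $\phi_1$ qualifies as an admissible weight $u$ in Corollary \ref{corollary-wii1}, and then the inequality \eqref{wii5} is precisely the conclusion of that corollary with $u=\phi_1$ and $C_u$ renamed $c$ (note $L^2(\ring M)=L^2(M)$ since $\partial M$ has measure zero).

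First I would recall the setup preceding the statement: with $0\le q\in C^1(M)$, the operator $A=-\Delta+q$ on $D(A)=H^2(M)\cap H_0^1(M)$ has a simple first eigenvalue $\lambda_1$ with a positive, normalized eigenfunction $\phi_1\in C^2(M)$ (the $C^2$ regularity coming from elliptic regularity, as noted in the text). So I need to verify the three hypotheses of Corollary \ref{corollary-wii1} for the weight $u=\phi_1$: (i) a potential $\tilde q\in C(M)$ with $\tilde q\le 0$; (ii) $\phi_1\in C^2(M)\cap H_0^1(M)$, not identically zero; (iii) $\Delta\phi_1+\tilde q\,\phi_1\le 0$. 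For (iii), from the eigenvalue equation $-\Delta\phi_1+q\phi_1=\lambda_1\phi_1$ we get $\Delta\phi_1=(q-\lambda_1)\phi_1$, hence $\Delta\phi_1-\lambda_1\phi_1=(q-2\lambda_1)\phi_1$ — this is not quite in the required form, so instead I would set $\tilde q:=q-\lambda_1$ and write $\Delta\phi_1+\tilde q\,\phi_1=\Delta\phi_1+(q-\lambda_1)\phi_1=2(q-\lambda_1)\phi_1$. Hmm — let me redo this cleanly: $\Delta\phi_1+\tilde q\phi_1$ with $\tilde q=q-\lambda_1$ equals $(q-\lambda_1)\phi_1+(q-\lambda_1)\phi_1$, which is wrong. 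The correct computation: $\Delta\phi_1 = (q-\lambda_1)\phi_1$, so to make $\Delta\phi_1+\tilde q\phi_1\le 0$ with the sign of $\tilde q$ controlled, take $\tilde q = \lambda_1 - q$... no. Cleanest: the text already says "Since $\Delta\phi_1-q\phi_1=-\lambda_1\phi_1$, the Hopf's maximum principle is applicable to $\phi_1$", i.e. one uses the operator $\Delta - q$ directly. I would instead invoke Proposition \ref{proposition-wii1}/Corollary \ref{corollary-wii1} noting they only use the sign structure through Hopf's lemma (Lemma \ref{lemma-wii1}), and that $\phi_1>0$ with positive inward normal derivative on $\partial M$ follows exactly as in the text's remark.

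Concretely, then, the proof I would write is short: observe that $\phi_1$ satisfies $\Delta\phi_1 - \tilde q\phi_1 = 0$ with $\tilde q = \lambda_1 - q \ge ?$ — again the sign of $\lambda_1-q$ is not automatically nonnegative, but $\lambda_1>0$ (being the first Dirichlet eigenvalue of $-\Delta$ perturbed by $q\ge0$, so $\lambda_1\ge\lambda_1(-\Delta)>0$), and one may freely replace $q$ by $q+\|q\|_\infty$ shifting $\lambda_1$ accordingly if needed; in any case what Lemma \ref{lemma-wii1} and Proposition \ref{proposition-wii1} genuinely require is that $\phi_1>0$ in $\ring M$ and $\partial_\nu\phi_1>0$ on $\partial M$, which the text establishes. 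Granting that, Proposition \ref{proposition-wii1} gives $\phi_1(x)\ge c_{\phi_1}d(x,\partial M)$, and Corollary \ref{corollary-wii1} (whose proof uses only this lower bound together with Hardy's inequality \eqref{wii1} and standard interpolation) then yields $\|f\|_{L^2(M)}\le c\|f\phi_1\|_{L^2(M)}^{1/2}\|f\|_{H^2(M)}^{1/2}$ for all $f\in H^2(M)$, with $c$ depending on $\phi_1$. That is \eqref{wii5}.

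I do not anticipate a genuine obstacle here — this is a corollary in the literal sense, obtained by plugging a specific function into an already-proved statement. The only point requiring a sentence of care is confirming that $\phi_1$ meets the hypotheses of Corollary \ref{corollary-wii1}: its $C^2\cap H_0^1$ regularity (elliptic regularity plus the Dirichlet condition built into $D(A)$), its not being identically zero (it is normalized), and the applicability of Hopf's maximum principle to it (via the relation $\Delta\phi_1-q\phi_1=-\lambda_1\phi_1\le 0$, using $\phi_1>0$ and $\lambda_1>0$), so that $u=\phi_1$ is a legitimate weight. Everything else is a direct citation of Corollary \ref{corollary-wii1}.
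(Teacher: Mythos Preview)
Your approach is correct and is exactly what the paper does: the text immediately preceding the corollary states that ``a particular weight in the preceding corollary is obtained by taking $u=\phi_1$'', so Corollary~\ref{corollary-wii2} is nothing more than Corollary~\ref{corollary-wii1} with $u=\phi_1$. Your only task is to check the hypotheses, and you do eventually land on the right verification; the clean way to phrase it (avoiding the detours in your write-up) is: since $q\ge 0$, the potential $-q$ satisfies $-q\le 0$, and from $-\Delta\phi_1+q\phi_1=\lambda_1\phi_1$ one has $\Delta\phi_1+(-q)\phi_1=-\lambda_1\phi_1\le 0$ because $\phi_1>0$ and $\lambda_1>0$, so Corollary~\ref{corollary-wii1} applies with the pair $(-q,\phi_1)$ in place of $(q,u)$.
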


The second weighted interpolation inequality relies on the following proposition.

\begin{proposition}\label{proposition-wii2}
Let $p\in L^\infty (M)$ and $u\in W^{2,n}(M)$ satisfying $(\Delta +p)\varphi =0$ in $M$ and $\varphi^2\in W^{2,n}(M)$. Then there exists $\delta =\delta (\varphi ) >0$ so that $|\varphi| ^{-\delta} \in L^1(M)$. 
\end{proposition}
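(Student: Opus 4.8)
The plan is to exploit the fact that $\varphi^2$ is a nonnegative $W^{2,n}$ function whose Laplacian is controlled, and to extract from this a quantitative lower bound for the measure of sub-level sets $\{|\varphi|<\epsilon\}$, which is exactly what is needed to integrate a negative power of $|\varphi|$. First I would write $w=\varphi^2$ and compute $\Delta w = 2\varphi\Delta\varphi + 2|\nabla\varphi|^2 = -2p\varphi^2 + 2|\nabla\varphi|^2 \ge -2p w$ (using the equation $(\Delta+p)\varphi=0$), so that $\Delta w + 2pw \ge 0$ in the weak sense, with $w\in W^{2,n}(M)$ and $w\ge 0$, $w\not\equiv 0$. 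Thus $w$ is a nonnegative subsolution (up to the zero-order term $2pw\in L^\infty$) of an elliptic operator.

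The key step is then a weak Harnack / measure-theoretic estimate for such $w$: I would invoke the local maximum principle and the weak Harnack inequality for $W^{2,n}$ supersolutions (as in \cite[Theorem 8.18 and Theorem 9.22]{GT}, adapted to the manifold $M$ by working in local coordinates and using a finite cover), applied to the function $v = w + \eta$ for small $\eta>0$, or more directly a result stating that $\log w \in BMO$ or that $w$ satisfies a reverse-Hölder-type bound. The cleanest route is: since $w\ge 0$ solves $\Delta w \ge -c w$ with $w\in W^{2,n}$, the function $-\log w$ is, locally, in the John–Nirenberg class, which gives constants $\delta_0>0$ and $C$ with $\int_{B} |w|^{-\delta_0}\,dV \le C$ on each coordinate ball $B$; covering $M$ by finitely many such balls and taking $\delta = \delta_0/2$ yields $\int_M |\varphi|^{-\delta}\,dV = \int_M w^{-\delta/2}\,dV < \infty$.

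An alternative, more self-contained argument avoids BMO: one shows directly that for every ball $B\subset\subset M$ there are $c_B>0$, $\mu_B>0$ with $|\{x\in B: w(x)<\epsilon\}| \le c_B\,\epsilon^{\mu_B}$ for all small $\epsilon$ — this is a standard consequence of the weak Harnack inequality, since if $w$ is small on a set of large measure then $\sup_{B'} w$ is small on a smaller ball, contradicting $w\not\equiv 0$ once one propagates positivity via a chain of overlapping balls. From such a distributional bound one computes $\int_M w^{-s}\,dV = s\int_0^\infty t^{-s-1}|\{w<t\}|\,dt$, which converges provided $s<\mu_B$, uniformly over the finite subcover; taking $\delta=2s$ finishes the proof.

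The main obstacle I anticipate is making the positivity-propagation and the sub-level-set estimate genuinely uniform and geometry-independent on the manifold: one must ensure the Harnack constants from the finite cover chain together correctly, and one must handle the boundary $\partial M$ — near $\partial M$ the Hopf-type bound $|\varphi|\gtrsim d(x,\partial M)$ of Proposition \ref{proposition-wii1} (or a barrier argument) should be used to guarantee $|\varphi|^{-\delta}$ is integrable in a collar of $\partial M$ since $d(\cdot,\partial M)^{-\delta}\in L^1$ for $\delta<1$, while the interior is handled by the weak Harnack estimate above. Reconciling these two regimes, and checking that the regularity hypothesis $\varphi^2\in W^{2,n}(M)$ (rather than merely $\varphi\in W^{2,n}$) is exactly what licenses the computation of $\Delta w$ and the application of the $W^{2,n}$ elliptic estimates, is where the care is needed.
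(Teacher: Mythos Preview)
Your proposal has two genuine gaps, one for the boundary and one for the interior.

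\textbf{Boundary.} You propose to handle a collar of $\partial M$ by invoking the Hopf-type bound of Proposition~\ref{proposition-wii1}, i.e.\ $|\varphi|\gtrsim d(\cdot,\partial M)$. But that proposition requires $q\le 0$, $\Delta\varphi+q\varphi\le 0$, and in particular that $\varphi$ does not change sign; none of this is assumed here. In the intended application (eigenfunctions $\varphi_k$ of $\mathcal{A}(q,a)$, see the paragraph preceding Theorem~\ref{theorem-w2}) $\varphi$ is a higher eigenfunction that typically vanishes in the interior and changes sign, and $p$ has no sign. So the Hopf route is simply unavailable. The paper instead straightens the boundary, uses the \emph{doubling inequality at the boundary} of Adolfsson--Escauriaza \cite{AE} for the solution $u=\varphi\circ f_\alpha^{-1}$, combines it with the sup-bound for the subsolution $|u|^2$ (Harnack at the boundary, \cite[Theorem 9.26]{GT}), reflects across $\{x_n=0\}$, and then feeds the pair (doubling $+$ sup-to-average) into the argument of \cite[Theorem 4.2]{CT} to conclude $|u|^{-\delta}\in L^1(B_+)$.

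\textbf{Interior.} You set $w=\varphi^2$ and correctly observe $\Delta w + 2pw\ge 0$, but this makes $w$ a nonnegative \emph{subsolution}, not a supersolution. The weak Harnack inequality (\cite[Theorems 8.18, 9.22]{GT}) controls small values of nonnegative \emph{supersolutions}; for subsolutions you only get the local maximum principle $\sup_{B'}w\le C\|w\|_{L^p(B)}$, which gives no information on the set $\{w<\epsilon\}$. The claim ``$-\log w$ is locally in BMO'' is the Moser statement for positive \emph{solutions}, and it fails for mere subsolutions: $w=|x|^{2N}$ satisfies $\Delta w\ge 0$ for every $N$, yet the BMO norm of $\log w$ and the admissible $\delta$ with $w^{-\delta}\in L^1$ depend on $N$. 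What fixes $N$ (the vanishing order) for a genuine solution $\varphi$ is \emph{quantitative unique continuation} --- precisely the doubling inequality --- and this is the ingredient your sketch omits. Your alternative sublevel-set argument has the same defect: the step ``if $w$ is small on a large set then $\sup_{B'}w$ is small'' does not follow from the local maximum principle without an additional $L^p$ bound on $w$ over the remaining set.

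In short, the missing idea is the doubling inequality (interior: Garofalo--Lin type; boundary: \cite{AE}); once you have doubling together with the sup-to-average bound for $|\varphi|^2$, the $L^1$-integrability of $|\varphi|^{-\delta}$ follows by the mechanism of \cite{CT}. Harnack/BMO alone, applied to the subsolution $\varphi^2$, cannot substitute for it.
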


 It is worth mentioning that in general $\delta <1$ as soon as $\varphi$ vanishes at some point $x_0\in M$. Consider for instance in the flat case $\psi (x)\sim |x-x_0|^k$ near $x_0$ if $x_0$ is a zero of order $k$. It is then clear that $|\varphi|^{-\delta}$ is locally integrable in a neighborhood of $x_0$ if and only if $\delta k<n-1$. In consequence $\delta <1$ whenever $k\ge n-1$.

\begin{proof}[Sketch of the proof]

{\it First step.} Denote by $B$ the unit ball of $\mathbb{R}^n$ and let $B_+=B\cap \mathbb{R}_+^n$, with $\mathbb{R}_+^n=\{ x=(x',x_n)\in \mathbb{R}^n;\; x_n >0\}$. Let $L$ be a second order differential operator acting as follows
\[
Lu=\partial _j(a_{ij}\partial _i u )+ V\cdot \nabla u+du.
\]
Assume that $(a_{ij})$ is a symmetric matrix with entries in
$C^1(2\overline{B_+} )$, $V\in L^\infty (2B_+)^n$ is real
valued 
and $d\in L^\infty (2B_+)$ is complex valued. Suppose furthermore that
\[
a_{ij}(x)\xi _j\cdot \xi_j\ge \kappa_0 |\xi |^2,\quad x\in 2B_+,\; \xi \in \mathbb{R}^n,
\]
for some $\kappa_0 >0$.

Let $u\in W^{2,n}(2B_+)\cap C^0(2\overline{B_+})$ be a weak solution of $Lu=0$
satisfying $u=0$ on $\partial (2B_+)\cap \overline{\mathbb{R}^n_+}$ and $|u|^2\in W^{2,n}(2B_+)\cap C^0(2\overline{B_+})$.
\smallskip
From \cite[Theorem 1.1, page 942]{AE} there exists a constant $C$, depending
of $u$, so that the following doubling inequality at the boundary
\begin{equation*}
\int_{B_{2r}\cap B_+}{|u|}^2dx \le C\int_{B_r\cap B_+}{|u|}^2dx,
\end{equation*}
holds for any ball $B_{2r}$ of radius $2r$ contained in $2B$.

On the other hand  simple calculations yield, where $v= \Re u$ and $w=\Im u$,
\begin{align*}
\partial _j(a_{ij}\partial _i |u|^2 )+ 2V\cdot \nabla |u|^2&+4(|\Re d|+|\Im d|)|u|^2
\\
&\ge 2a_{ij}\partial_iv\partial_jv+2a_{ij}\partial_iw\partial_jw \ge 0\quad \mbox{in}\; 2B_+
\end{align*}
and $|u|^2=0$ on $\partial (2B_+)\cap
\overline{\mathbb{R}^n_+}$.

Harnak's inequality at the boundary (see \cite[Theorem 9.26, page 250]{GT}) then yields 
\begin{equation*}
\sup_{B_r \cap B_+}{|u|}^2\le \frac{C}{|B_{2r}|}\int_{B_{2r}\cap B_+}
{|u|}^2dx,
\end{equation*}
for any ball $B_{2r}$ of radius $2r$ contained in $2B$.

Define $\tilde{u}$  by
\begin{align*}
&\tilde{u} (x',x_n)=u(x',x_n)\quad \mbox{if}\; 
(x',x_n)\in 2B_+ ,
\\ 
&\tilde{u} (x',x_n)=u(x',-x_n)\quad \mbox{if}\; (x',-x_n)\in 2B_+.
\end{align*}
Therefore  $\tilde{u} $ belongs  to $H^1(2B)\cap L^\infty(2B)$ and satisfies 
\begin{eqnarray}
\int_{B_{2r}}|\tilde{u}|^2dx \le C\int_{B_r}|\tilde{u}|^2dx,\label{wii6}
\\
\sup_{B_r}|\tilde{u}|^2\le \frac{C}{|B_{2r}|}\int_{B_{2r}}
{|\tilde u|}^2dx,\label{wii7}
\end{eqnarray}
 for any ball $B_{2r}$ of radius $2r$ contained in $2B$.

Inequalities \eqref{wii6} and \eqref{wii7} at hand we mimic the proof of \cite[Theorem 4.2, page 1784]{CT} in order to obtain that $|\tilde{u}|^{-\delta}\in L^1(B)$, for some $\delta >0$ depending on $u$. Whence $|u|^{-\delta}\in L^1(B_+)$.

{\it Second step.} As $\partial M$ is compact there exists a finite cover $(U_\alpha )$ of $\partial M$  and  $C^\infty$-diffeomorphisms  $f_\alpha: U_\alpha \rightarrow 2B$ so that $f_\alpha (U_\alpha \cap M)=2B_+$, $f_\alpha (U_\alpha \cap \partial M)=2B \cap \overline{\mathbb{R}^n_+}$ and, for any $x\in \partial M$, $x\in V_\alpha =f_\alpha^{-1}(B)$, for some $\alpha$. Then $u_\alpha =\varphi \circ f_\alpha ^{-1}$ satisfies $L_\alpha u_\alpha =0$ in $2B$ and $u=0$ on $\partial (2B_+)\cap \overline{\mathbb{R}^n_+}$ for some $L=L_\alpha$ satisfying the conditions of the first step. Hence $|u_\alpha|^{-\delta _\alpha}\in L^1(B_+)$ and then $|\varphi|^{-\delta _\alpha}\in L^1(V_\alpha )$. Let $V$ the union of $V_\alpha$'s. Since $u\in L^\infty (V)$ we get $|u|^{-\delta_0}\in L^1(V)$ with $\delta_0=\min \delta_\alpha$. Next, let $\epsilon$ sufficiently small in such a way that $M\setminus M_\epsilon\subset V$, where $M_\epsilon =\{ x\in M;\; \mbox{dist}(x,\partial M)>\epsilon \}$. Proceeding as previously  it is not hard to get that there exists $\delta _1$ so that $|\varphi|^{-\delta _1}\in L^1(M_{\epsilon/2})$. Finally, as it is expected we obtain that $|\varphi|^{-\delta}\in L^1(M)$ with $\delta =\min (\delta_0,\delta_1)$.
\end{proof}

\begin{lemma}\label{lemma-wii2} Let $\varphi$ be as in Proposition \ref{proposition-wii2}.
There exists a constant $C>0$, depending on $\varphi$,  so that we have
\[
\|f\|_{L^2(M)}\le C\|f\|_{L^\infty (M)}^{\frac{2}{2+\delta}}\|f\varphi\|_{L^2(M)}^{\frac{\delta}{2+\delta}},
\]
for any $f\in L^\infty (M)$
\end{lemma}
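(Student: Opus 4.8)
The plan is a one-line Hölder interpolation, with the only input being that $|\varphi|^{-\delta}\in L^1(M)$, which is exactly the content of Proposition \ref{proposition-wii2} and is what makes the constant finite. We may assume $f$ is not identically zero (otherwise there is nothing to prove), and since $|\varphi|^{-\delta}$ is integrable the zero set $\{\varphi=0\}$ has $dV$-measure zero; hence the splitting
\[
|f|^2=\bigl(|f|^2|\varphi|^{a}\bigr)\cdot|\varphi|^{-a}
\]
makes sense $dV$-a.e.\ for any exponent $a>0$ we like to choose.

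First I would fix the exponents. Set $p=\frac{2+\delta}{\delta}$ with conjugate exponent $p'=\frac{2+\delta}{2}$, and choose $a=\frac{2}{p}=\frac{2\delta}{2+\delta}$, so that $ap=2$ and $ap'=\delta$. Applying Hölder's inequality to the product above then gives
\[
\int_M|f|^2\,dV\le\left(\int_M|f|^{2p}|\varphi|^{2}\,dV\right)^{1/p}\left(\int_M|\varphi|^{-\delta}\,dV\right)^{1/p'}.
\]
In the first factor I would use the crude bound $|f|^{2p}=|f|^{2p-2}|f|^2\le\|f\|_{L^\infty(M)}^{2p-2}|f|^2$, noting $2p-2=\frac{4}{\delta}$, to get $\int_M|f|^{2p}|\varphi|^2\,dV\le\|f\|_{L^\infty(M)}^{4/\delta}\|f\varphi\|_{L^2(M)}^2$.

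Substituting this back, and recalling that $\frac1p=\frac{\delta}{2+\delta}$ and $\frac1{p'}=\frac{2}{2+\delta}$, one obtains
\[
\|f\|_{L^2(M)}^2\le\|f\|_{L^\infty(M)}^{\frac{4}{2+\delta}}\,\|f\varphi\|_{L^2(M)}^{\frac{2\delta}{2+\delta}}\left(\int_M|\varphi|^{-\delta}\,dV\right)^{\frac{2}{2+\delta}},
\]
and taking square roots yields the stated inequality with $C=\bigl(\int_M|\varphi|^{-\delta}\,dV\bigr)^{1/(2+\delta)}$, a constant depending only on $\varphi$ and $M$. There is no real obstacle here: the argument is purely the arithmetic of choosing conjugate Hölder exponents so that the weight appears to the power $2$ in one factor and to the power $-\delta$ in the other, and the finiteness of $C$ is precisely Proposition \ref{proposition-wii2}.
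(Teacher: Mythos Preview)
Your proof is correct and in fact more direct than the paper's. You apply H\"older's inequality once, with exponents $p=\frac{2+\delta}{\delta}$ and $p'=\frac{2+\delta}{2}$ chosen so that the weight lands with power $2$ in one factor and $-\delta$ in the other, and this yields exactly the exponents $\frac{2}{2+\delta}$ and $\frac{\delta}{2+\delta}$ stated in the lemma, with the explicit constant $C=\bigl(\int_M|\varphi|^{-\delta}\,dV\bigr)^{1/(2+\delta)}$.

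The paper proceeds differently: it first reduces to $\delta<2$, then uses Cauchy--Schwarz to bound $\int_M|f|^{\delta/2}\,dV$ by $\||f\varphi|^\delta\|_{L^1}^{1/2}\||\varphi|^{-\delta}\|_{L^1}^{1/2}$, passes from $\||f\varphi|^\delta\|_{L^1}$ to $\|f\varphi\|_{L^2}$ by a second H\"older step, and finally uses the $L^\infty$ bound on $f$ to go from $\||f|^{\delta/2}\|_{L^1}$ to $\|f\|_{L^2}$. This two-stage route actually produces the exponents $1-\delta/4$ and $\delta/4$ rather than the ones in the statement, so your single-H\"older argument is both shorter and matches the lemma as written. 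The only extra ingredient in your version is the preliminary remark that $\{\varphi=0\}$ has measure zero so the splitting is legitimate a.e., which is immediate from $|\varphi|^{-\delta}\in L^1(M)$.
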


\begin{proof}
Let $\delta=\delta (\varphi )$ given as in the preceding proposition. Since $\varphi$  belongs to $L^\infty (M)$, substituting $\delta$ by $\min (1,\delta )$ if necessary, we may assume that $\delta <2$. We get by applying Cauchy-Schwarz's inequality 
\[
\int_{M}|f|^{\delta /2} dV \le \||f\varphi |^\delta \|_{L^1(M)}^{1/2}\||\varphi| ^{-\delta}\|^{1/2}_{L^1(M)}.
\]
But by H\"older's inequality
\[
\||f\varphi |^\delta \|_{L^1(M)}^{1/2}\le \mbox{Vol}(M)^{(2-\delta )/4}\|f\varphi\|_{L^2(M)}^{\delta /2}.
\]
Whence
\begin{equation}\label{wii8}
\||f|^{\delta /2}\|_{L^1 (M)}\le \mbox{Vol}(M)^{(2-\delta )/4}\|f\varphi\|_{L^2(M)}^{\delta /2}\||\varphi| ^{-\delta}\|^{1/2}_{L^1(M)}.
\end{equation}
On the other hand
\begin{equation}\label{wii9}
\|f\|_{L^2(M)}\le \|f\|_{L^\infty (M)}^{1-\delta/4}\||f|^{\delta /2}\|_{L^1 (M)}^{1/2}.
\end{equation}
A combination of \eqref{wii8} and \eqref{wii9} yields
\[
\|f\|_{L^2(M)}\le C\|f\|_{L^\infty (M)}^{1-\delta/4}\|f\varphi\|_{L^2(M)}^{\delta/4},
\]
which is the expected inequality.
\end{proof}

\section{Inverse source problem: abstract framework}

Let $H$ be a Hilbert space  and $A :D(A)  \subset H \rightarrow H$ be the generator of continuous semigroup $T(t)$. An operator  $\mathscr{C} \in \mathscr{B}(D(A),Y)$,  $Y$ is another Hilbert space which is identified with its dual space, is called an admissible observation  for $T(t)$ if for some (and hence for all) $\tau >0$ the operator $\Psi \in \mathscr{B}(D(A),L^2((0,\tau ),Y))$ given by
\[
(\Psi  x)(t)=\mathscr{C}T(t)x,\;\; t\in [0,\tau ],\quad x\in D(A),
\]
has a bounded extension to $H$.

We  introduce the definition of exact observability for the system
\begin{align}\label{isp1}
&z'(t)=Az(t),\quad z(0)=x,
\\
&y(t)=\mathscr{C}z(t),\label{isp2}
\end{align}
where $\mathscr{C}$ is an admissible observation  for $T(t)$. The pair $(A,\mathscr{C})$ is said exactly observable at time $\tau >0$ if there is a constant $\kappa $ such that the solution $(z,y)$ of \eqref{isp1} and \eqref{isp2} satisfies 
\[
\int_0^\tau \|y(t)\|_Y^2dt\geq \kappa ^2 \|x\|_H^2,\;\; x\in D(A).
\]
Or equivalently
\begin{equation}\label{isp3}
\int_0^\tau \|(\Psi  x)(t)\|_Y^2dt\geq \kappa ^2 \|x\|_H^2,\;\; x\in D(A).
\end{equation}

Consider the Cauchy problem
\begin{equation}\label{isp4}
z'(t)=Az(t)+\lambda (t)x,\;\; z(0)=0
\end{equation}
and set
\begin{equation}\label{isp5}
y(t)=\mathscr{C}z(t),\;\; t\in [0,\tau ].
\end{equation}

By Duhamel's formula we have
\begin{equation}\label{isp6}
y(t)=\int_0^t \lambda (t-s)\mathscr{C}T(s)xds=\int_0^t\lambda (t-s)(\Psi  x)(s)ds.
\end{equation}

Let
\[
H^1_\ell ((0,\tau), Y) = \left\{u \in H^1((0,\tau), Y); \; u(0) = 0 \right\}.
\]

Define the operator $S:L^2((0,\tau), Y)\longrightarrow H^1_\ell ((0,\tau ) ,Y)$ by
\begin{equation}\label{isp7}
(Sh)(t)=\int_0^t\lambda (t-s)h(s)ds.
\end{equation}

If $E =S\Psi$ then \eqref{isp6} takes the form
\[
y(t)=(E x)(t).
\]

\begin{theorem}\label{theorem-isp1}
Assume that $(A,\mathscr{C})$ is exactly observable for $\tau \geq \tau_0$, for some $\tau_0>0$. Let $\lambda \in H^1(0,\tau )$ satisfies $\lambda (0)\ne 0$. Then $E$ is one-to-one from $H$ onto $H^1_\ell ((0,\tau), Y)$ and
\begin{equation}\label{isp8}
\frac{\kappa |\lambda (0)|}{\sqrt{2}}e^{-\tau \frac{\|\lambda '\|^2_{L^2(0,\tau)}}{|\lambda (0)|^2 }}\|x\|_H\leq  \|Ex\|_{H^1_\ell ((0,\tau), Y)},\quad x\in H.
\end{equation}
\end{theorem}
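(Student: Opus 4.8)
The plan is to reduce the lower bound for $E = S\Psi$ to the known observability lower bound \eqref{isp3} for $\Psi$, by showing that the Volterra-type operator $S$ defined in \eqref{isp7} is bounded below on $L^2((0,\tau),Y)$. Concretely, I would first establish that for every $h \in L^2((0,\tau),Y)$ one has an estimate of the form
\[
\|Sh\|_{H^1_\ell((0,\tau),Y)} \ge c(\lambda,\tau)\, \|h\|_{L^2((0,\tau),Y)},
\]
with $c(\lambda,\tau) = |\lambda(0)| e^{-\tau\|\lambda'\|_{L^2(0,\tau)}^2/|\lambda(0)|^2}/\sqrt{2}$ or something comparable; combining this with \eqref{isp3} applied to $x$ (and the fact that $\Psi x = \Psi x$ has $L^2$-norm at least $\kappa\|x\|_H$) then gives \eqref{isp8}. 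Injectivity of $E$ follows since $\Psi$ is injective on $H$ by exact observability and $S$ will be shown injective; surjectivity onto $H^1_\ell$ I would get by noting $S$ is invertible (solving the Volterra equation) and $\Psi$ has closed range equal to... this last point needs care, see below.

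The heart of the matter is the lower bound for $S$. Write $g = Sh$, so $g \in H^1_\ell$, $g(0)=0$, and $g'(t) = \lambda(0)h(t) + \int_0^t \lambda'(t-s)h(s)\,ds$ (differentiating the convolution, valid since $\lambda \in H^1(0,\tau)$). Thus $\lambda(0) h(t) = g'(t) - \int_0^t \lambda'(t-s)h(s)\,ds$. I would take $Y$-norms, square, and integrate in $t$ over $(0,\tau)$; using $\|\int_0^t \lambda'(t-s)h(s)\,ds\|_Y \le \int_0^t |\lambda'(t-s)|\,\|h(s)\|_Y\,ds$ and Cauchy--Schwarz, one gets a Gronwall-type inequality for the function $t \mapsto \int_0^t \|h(s)\|_Y^2\,ds$ against $\|g'\|_{L^2}^2$. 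Running Gronwall's lemma yields $\|h\|_{L^2((0,\tau),Y)}^2 \le \frac{2}{|\lambda(0)|^2} e^{2\tau\|\lambda'\|_{L^2(0,\tau)}^2/|\lambda(0)|^2}\|g'\|_{L^2((0,\tau),Y)}^2$, and since $\|g\|_{H^1_\ell} \ge \|g'\|_{L^2}$ this is exactly the needed bound on $S$. Then for $x \in D(A)$,
\[
\|Ex\|_{H^1_\ell((0,\tau),Y)} = \|S(\Psi x)\|_{H^1_\ell} \ge c(\lambda,\tau)\|\Psi x\|_{L^2((0,\tau),Y)} \ge c(\lambda,\tau)\,\kappa\,\|x\|_H,
\]
and the estimate extends to all $x \in H$ by density, since both $E$ and the identity are bounded; $\Psi$ extends boundedly to $H$ by admissibility, and $S$ is bounded, so $E \in \mathscr{B}(H, H^1_\ell)$.

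The main obstacle I anticipate is the surjectivity claim (that $E$ maps $H$ \emph{onto} $H^1_\ell((0,\tau),Y)$). The lower bound shows $E$ has closed range and is injective, but surjectivity onto the full space $H^1_\ell$ is a strong statement: it requires that $\Psi$ itself be surjective onto $L^2((0,\tau),Y)$ (since $S$ is a bijection of $L^2((0,\tau),Y)$ onto $H^1_\ell((0,\tau),Y)$, the latter because the Volterra equation $Sh = g$ is uniquely solvable for $h \in L^2$ given $g \in H^1_\ell$). But $\Psi$ need not be surjective in general. I suspect the intended reading is that $E$ is onto its range, or that there is an implicit additional hypothesis; absent that, I would state the cleaner conclusion "$E$ is one-to-one with closed range and satisfies \eqref{isp8}", and prove surjectivity only under the extra assumption that $\operatorname{ran}\Psi = L^2((0,\tau),Y)$, deriving it from the open mapping theorem applied to the bounded-below operator $\Psi$. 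The quantitative estimate \eqref{isp8} and injectivity, which are what the inverse-problem applications actually use, are unaffected by this subtlety.
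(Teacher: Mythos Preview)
Your approach is essentially the same as the paper's: differentiate the Volterra relation $Sh=g$ to obtain $\lambda(0)h(t)=g'(t)-\int_0^t\lambda'(t-s)h(s)\,ds$, apply the convexity inequality $(a+b)^2\le 2(a^2+b^2)$ and Cauchy--Schwarz, then Gronwall, and finally combine the resulting lower bound for $S$ with the observability inequality \eqref{isp3} for $\Psi$. Your concern about the ``onto'' claim is well-founded: the paper's own proof establishes only injectivity and the quantitative estimate \eqref{isp8}, not surjectivity of $E$ onto $H^1_\ell((0,\tau),Y)$, and indeed surjectivity would require $\Psi$ to be onto $L^2((0,\tau),Y)$, which exact observability alone does not give.
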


\begin{proof}
Taking first the derivative with respect to $t$ of both sides of the integral equation
\[
\int_0^t \lambda (t-s)\varphi (s) ds=\psi (t)
\]
we get the following Volterra integral equation of second kind
\[
\lambda (0)\varphi (t) +\int_0^t\lambda '(t-s)\varphi (s)ds=\psi '(t).
\]
Mimicking the proof of  \cite[Theorem 2, page 33]{Ho} we obtain that this integral equation has a unique solution $\varphi \in L^2((0,\tau ) ,Y)$ and
\begin{align*}
\|\varphi \|_{L^2((0,\tau ),Y)}&\leq C \|\psi '\|_{L^2((0,\tau ),Y)}
\\ 
&\le C \|\psi \|_{H^1_\ell ((0,\tau ),Y)}.
\end{align*}
Here $C =C(\lambda )$ is a constant. 

For estimating the constant $C$ above we first use the elementary convexity inequality $(a+b)^2\leq 2(a^2+b^2)$ in order to get
\[
\| |\lambda (0)|\varphi (t)\|_Y^2\leq 2\left( \int_0^t\frac{|\lambda '(t-s)}{|\lambda (0)|}\left[|\lambda (0)|\| \varphi (s)\|_Y\right]ds \right)^2+2\|\psi '(t)\|_Y^2.
\]
Thus
\[
|\lambda (0)|^2\| \varphi (t)\|_Y^2\leq 2\frac{\|\lambda '\|_{L^2((0,\tau))}^2}{|\lambda (0)|^2}\int_0^t|\varphi (0)|^2\| \varphi (s)\|_Y^2ds +2\|\psi '(t)\|_Y^2
\]
by the Cauchy-Schwarz's inequality. Therefore using  Gronwall's lemma we obtain in a straightforward manner
\[
\| \varphi \|_{L^2((0,\tau ),Y)}\leq \frac{\sqrt{2}}{|\lambda (0)|}e^{\tau \frac{\|\lambda '\|_{L^2((0,\tau))}^2}{|\lambda (0)|^2}}\|\psi '\|_{L^2((0,\tau ),Y)}
\]
and then
\[
\| \varphi \|_{L^2((0,\tau ),Y)}\leq \frac{\sqrt{2}}{|\lambda (0)|}e^{\tau\frac{\|\lambda '\|_{L^2((0,\tau))}^2}{|\lambda (0)|^2}}\|S\varphi \|_{H^1_\ell ((0,\tau ),Y)}.
\]
In light of \eqref{isp3} we end up getting
\[
\|Ex\|_{H^1_\ell ((0,\tau), Y)}\geq \frac{\kappa |\lambda (0)|}{\sqrt{2}}e^{-\tau \frac{\|\lambda '\|^2_{L^2((0,\tau))}}{|\lambda (0)|^2 }} \|x\|_H.
\]
This is the expected inequality.
\end{proof}

We shall need a variant of Theorem \ref{theorem-isp1}. If $(A,\mathscr{C})$ is as in Theorem \ref{theorem-isp1} then, as in the preceding section, by the perturbation argument in \cite[Proposition 6.3.3, page 189]{TW}, there exist $\aleph>0$ and $\kappa >0$ such that for any $P\in \mathscr{B}(H)$ satisfying $\|P\|\leq \aleph$ we have that $(A+P,\mathscr{C})$ is exactly observable with $\kappa  (P+A)\ge \kappa$.

Define $E^P$ similarly to $E$ by substituting in $E$  $A$ by $A+P$. 

\begin{theorem}\label{theorem-isp2}
Assume that $(A,\mathscr{C})$ is exactly observable for $\tau \geq \tau_0$, for some $\tau_0>0$, and let $\lambda \in H^1(0,\tau )$ satisfies $\lambda (0)\ne 0$. There exist $\aleph>0$ and $\kappa >0$ so that for any $P\in \mathscr{B}(H)$ satisfying $\|P\|\leq \aleph$ we have that  $E^P$ is one-to-one from $H$ onto $H^1_\ell ((0,\tau), Y)$ and
\begin{equation}\label{isp9}
\frac{\kappa |\lambda (0)|}{\sqrt{2}}e^{-\tau \frac{\|\lambda '\|^2_{L^2(0,\tau)}}{|\lambda (0)|^2 }}\|x\|_H\leq  \|E ^Px\|_{H^1_\ell ((0,\tau), Y)},\quad x\in H.
\end{equation}
\end{theorem}

We will consider inverse source problems with singular sources. For this purpose we need to extend Theorem \ref{theorem-isp1}. Fix then $\varrho$ in the resolvent set of $A$. Let $H_1$ be the space $D(A)$ equipped with the norm $\|x\|_1=\|(\varrho -A)x\|$ and denote by $H_{-1}$  the completion of $H$ with respect to the norm $\|x\|_{-1}=\| (\varrho -A)^{-1}x\|$. As we observed in \cite[Proposition 4.2, page 1644]{TW} and its proof,  when $x\in H_{-1}$ (which is the dual space of $H_1$ with respect to the pivot space $H$) and $\lambda \in H^1(0,\tau )$, then according to the classical extrapolation theory of semigroups  the Cauchy problem \eqref{isp1} has a unique solution $z\in C([0,\tau ],H)$. In addition $y$ given in \eqref{isp2} belongs to $L^2((0,\tau ) ,Y)$.

If $x\in H$ we have by Duhamel's formula
\begin{equation}\label{isp10}
y(t)=\int_0^t  \lambda (t-s)\mathscr{C}T(s)xds=\int_0^t \lambda (t-s)(\Psi  x)(s)ds.
\end{equation}

Let
\[
H^1_\ell ((0,\tau), Y) = \left\{u \in H^1((0,\tau), Y); \; u(0) = 0 \right\}.
\]

We define the operator $S :L^2((0,\tau), Y)\longrightarrow H^1_\ell ((0,\tau ) ,Y)$ by
\begin{equation}\label{isp11}
(S h)(t)=\int_0^t \lambda (t-s)h(s)ds.
\end{equation}

Hence $E  =S \Psi$ then \eqref{isp10} the form
\[
y(t)=(E x)(t).
\]

Let $\mathcal{Z}=(\varrho -A^\ast)^{-1}(X+\mathscr{C}^\ast Y)$.

\begin{theorem}\label{theorem-isp3} 
Assume that $(A,\mathscr{C})$ is exactly observable at time $\tau$. Then 
\\
(i) $E$ is one-to-one from $H$ onto $H^1_\ell ((0,\tau), Y)$.
\\
(ii) $E$ is extended to an isomorphism, denoted by $\tilde{E}$, from $\mathcal{Z}'$ onto $L^2((0,\tau ),Y)$.
\\
(iii) There exists a constant $\tilde{\kappa}$, independent of $\lambda$, so that
\begin{equation}\label{isp12}
 \|x\|_{\mathcal{Z}'}\le \tilde{\kappa}|\lambda (0)|e^{\frac{\|\lambda '\|^2_{L^2((0,\tau ))}}{|\lambda (0)|^2}\tau}\|\tilde{E}x\|_{L^2 ((0,\tau), Y)}.
\end{equation}
\end{theorem}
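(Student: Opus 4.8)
The plan is to reduce Theorem \ref{theorem-isp3} to the already-established Theorem \ref{theorem-isp1} (or rather to its proof) by a duality/extrapolation argument, exploiting the exact observability at time $\tau$ and the classical results on admissible observation operators recalled above. The point of parts (ii)–(iii) is to extend the isomorphism $E$, which Theorem \ref{theorem-isp1} gives on $H$ with values in $H^1_\ell((0,\tau),Y)$, to the larger space $\mathcal Z'$ with values in $L^2((0,\tau),Y)$; since differentiation maps $H^1_\ell((0,\tau),Y)$ isometrically (up to the boundary condition at $0$) to $L^2((0,\tau),Y)$, this amounts to identifying the correct initial-data space once one has composed with $d/dt$. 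Concretely, I would first record that for $x\in H$ one has $(Ex)'(t)=\lambda(0)(\Psi x)(t)+\bigl(S(\text{something})\bigr)(t)$ by differentiating the Volterra convolution, so that $\|Ex\|_{H^1_\ell}$ and $\|\Psi x\|_{L^2((0,\tau),Y)}$ are equivalent norms on $H$, with constants controlled by $|\lambda(0)|$ and $\exp\!\bigl(\tau\|\lambda'\|^2_{L^2}/|\lambda(0)|^2\bigr)$ exactly as in the proof of Theorem \ref{theorem-isp1}.

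Next I would prove (i): injectivity of $E$ on $H$ is immediate from \eqref{isp8}, and surjectivity onto $H^1_\ell((0,\tau),Y)$ follows because the Volterra equation of the second kind appearing in the proof of Theorem \ref{theorem-isp1} is uniquely solvable in $L^2((0,\tau),Y)$ for every right-hand side in $L^2$, while exact observability forces the resulting $L^2$ function to lie in the range of $\Psi$ — here one uses that the range of $\Psi$ is closed (a consequence of \eqref{isp3}) together with a density argument. For (ii), the key structural fact is that $\Psi$, a priori defined on $D(A)$, extends to a bounded operator $H\to L^2((0,\tau),Y)$, and, by the extrapolation theory of semigroups invoked just before the statement, it extends further to $H_{-1}\to L^2((0,\tau),Y)$; the natural intermediate space on which this extension is an \emph{isomorphism onto} $L^2((0,\tau),Y)$ is precisely $\mathcal Z'=\bigl((\beta-A^\ast)^{-1}(X+C^\ast Y)\bigr)'$, as isolated in \cite[Proposition 4.2, page 1644]{TW}. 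I would invoke that proposition (or reprove the relevant half of it): $\widetilde E$ is obtained as $S$ composed with the extended $\Psi$, and $S$ is an isomorphism $L^2((0,\tau),Y)\to H^1_\ell((0,\tau),Y)$ whose inverse is essentially the second-kind Volterra resolvent, so $\widetilde E:\mathcal Z'\to L^2((0,\tau),Y)$ is an isomorphism once $\Psi:\mathcal Z'\to L^2((0,\tau),Y)$ is.

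Finally, for the quantitative bound (iii), I would chase the constants: the inverse of $S$ on $L^2((0,\tau),Y)$ satisfies $\|S^{-1}\psi\|_{L^2}\le \tfrac{\sqrt2}{|\lambda(0)|}\exp\!\bigl(\tau\|\lambda'\|_{L^2}^2/|\lambda(0)|^2\bigr)\|\psi'\|_{L^2}$ from the Gronwall estimate already carried out in the proof of Theorem \ref{theorem-isp1}; combined with the (observability-based) lower bound $\|\Psi x\|_{L^2((0,\tau),Y)}\ge \widetilde\kappa^{-1}\|x\|_{\mathcal Z'}$ coming from \cite[Proposition 4.2]{TW} — where $\widetilde\kappa$ depends only on $A$, $C$, $\beta$, $\tau$ and not on $\lambda$ — one gets $\|x\|_{\mathcal Z'}\le \widetilde\kappa\|\Psi x\|_{L^2}=\widetilde\kappa\|S^{-1}\widetilde Ex\|_{L^2}\le \widetilde\kappa|\lambda(0)|e^{\tau\|\lambda'\|^2_{L^2}/|\lambda(0)|^2}\|\widetilde Ex\|_{L^2}$, absorbing $\sqrt2$ into $\widetilde\kappa$. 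The main obstacle I anticipate is making the duality identification of $\mathcal Z'$ as the sharp domain of the extended $\Psi$ fully rigorous — i.e. proving that $\Psi$ is bounded \emph{below} on $\mathcal Z'$, not merely bounded above — which is exactly the content of \cite[Proposition 4.2, page 1644]{TW}; everything else is a bookkeeping of constants already present in the proof of Theorem \ref{theorem-isp1}, and the fact that $\widetilde\kappa$ is independent of $\lambda$ is transparent since $\lambda$ enters only through $S$.
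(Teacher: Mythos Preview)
Your factorization $E=S\Psi$ and the plan to extend each factor separately is exactly the paper's strategy, but you have the target space of the extended $\widetilde\Psi$ wrong, and this derails both (ii) and your constant chase in (iii).

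You assert that $\widetilde\Psi$ maps $\mathcal Z'$ into $L^2((0,\tau),Y)$. It does not: for $x\notin H$ the map $t\mapsto CT(t)x$ is in general only a time--distribution. The relevant extension (this is \cite[Proposition 2.13]{TW}, not Proposition 4.2 --- the latter concerns the full inhomogeneous problem, i.e.\ $E$ itself, not $\Psi$) is
\[
\widetilde\Psi:\mathcal Z'\longrightarrow [H^1_r((0,\tau),Y)]',\qquad H^1_r=\{u\in H^1((0,\tau),Y):u(\tau)=0\},
\]
and it is bounded below there. Observe also that if your claim were correct then $S\widetilde\Psi$ would map $\mathcal Z'$ into $H^1_\ell$, not into $L^2$ as the theorem asserts, so the spaces in your argument are already inconsistent with the statement you are proving.

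Correspondingly, the second factor cannot be $S:L^2\to H^1_\ell$; one needs an extension $\widetilde S:[H^1_r]'\to L^2$. The paper obtains it by duality: compute the adjoint $S^\ast h(t)=\int_t^\tau\lambda(s-t)h(s)\,ds$, observe that $S^\ast:L^2\to H^1_r$, run the same differentiate--plus--Gronwall argument as in the proof of Theorem \ref{theorem-isp1} to get \eqref{isp13}, and then set $\widetilde S=(S^\ast)^\ast$, an isomorphism from $[H^1_r]'$ onto $L^2$ by \cite[Proposition 4.1]{TW}. The composition $\widetilde E=\widetilde S\,\widetilde\Psi$ then lands in $L^2$ as required.

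Your final chain in (iii) also breaks independently of the above: from $\|S^{-1}\psi\|_{L^2}\le \tfrac{\sqrt 2}{|\lambda(0)|}e^{\cdots}\|\psi'\|_{L^2}$ you jump to $\le|\lambda(0)|e^{\cdots}\|\widetilde Ex\|_{L^2}$, but $(\widetilde Ex)'$ is not even defined when $\widetilde Ex$ is merely in $L^2$, and the factor $|\lambda(0)|$ has silently migrated from denominator to numerator. The passage through the dual pairing $[H^1_r]'$ is precisely what makes the norms and the placement of the constants come out correctly.
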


\begin{proof}
We give the proof of (ii) and (iii) and we note that (i)  is contained in Theorem \ref{isp1}. We first observe that $S^\ast$, the adjoint of $S$, maps $L^2((0,\tau ),Y)$ into $H_r^1((0,\tau ),Y)$, where
\[
H_r^1((0,\tau ),Y)=\left\{u \in H^1((0,\tau), Y); \; u(\tau ) = 0 \right\}.
\]
Moreover 
\[
S^\ast h(t)=\int_t^\tau \lambda (s-t)h(s)ds,\quad h\in L^2((0,\tau ),Y).
\]

Fix $h\in L^2((0,\tau ),Y)$ and set $k=S^\ast h$. Then
\[
k'(t)= \lambda (0)h(t)-\int_t^\tau  \lambda '(s-t)h(s)ds.
\]
Hence
\begin{align*}
|\lambda (0)\|h(t)\|^2&\le \left( \int_t^\tau \frac{|\lambda '(s-t)|}{|\lambda (0)|}[ |\lambda (0)|\|h(s)\|]ds +\|k'(t)\|\right)^2
\\
&\le 2\left( \int_t^\tau \frac{|\lambda '(s-t)|}{|\lambda (0)|}[ |\lambda (0)|\|h(s)\|]ds\right)^2 +2\|k'(t)\|^2
\\
&\le 2\frac{\|\lambda '\|_{L^2((0,\tau ))}^2}{|\lambda (0)|^2}\int_0^t [|\lambda (0)|\|h(s)\|]^2ds+2\|k'(t)\|^2.
\end{align*}
The last estimate is obtained  by applying Cauchy-Schwarz's inequality.

Then Gronwall's lemma yields
\[
[|\lambda (0)|\|h(t)\|]^2\le 2e^{2\frac{\|\lambda '\|_{L^2((0,\tau ))}^2}{|\lambda (0)|^2}\tau }\|k'(t)\|^2.
\]
Therefore
\[
\|h\|_{L^2((0,\tau) ,Y)}\leq \frac{\sqrt{2}}{|\lambda (0)|}e^{\frac{\|\lambda '\|_{L^2((0,\tau ))}^2}{|\lambda (0)|^2}\tau }\|k'\|_{L^2((0,\tau) ,Y)}.
\]
Whence
\begin{equation}\label{isp13}
\|h\|_{L^2((0,\tau) ,Y)}\le \frac{\sqrt{2}}{|\lambda (0)|}e^{\frac{\|\lambda '\|_{L^2((0,\tau ))}^2}{|\lambda (0)|^2}\tau }\|S^\ast h\|_{H^1_r((0,\tau) ,Y)}.
\end{equation}
The adjoint operator of $S^\ast$, acting as a bounded operator from $[H^1_r((0,\tau);Y)]'$ into $L^2((0,\tau );Y)$, gives an extension of $S$. We denote by $\tilde{S}$ this operator. By \cite[Proposition 4.1, page 1644]{TW} $\tilde{S}$ defines an isomorphism from $[H_r((0,1);Y)]'$ onto $L^2((0,\tau );Y)$. In light of the identity
\[
\|\tilde{S}\|_{\mathscr{B}([H^1_r((0,\tau);Y)]';L^2((0,\tau ),Y))}=\|S^\ast \|_{\mathscr{B}(L^2((0,\tau );Y);H^1_r((0,\tau),Y))},
\]
\eqref{isp13} implies
\begin{equation}\label{isp14}
\frac{|\lambda (0)|}{\sqrt{2}}e^{-\frac{\|\lambda '\|_{L^2((0,\tau ))}^2}{|\lambda (0)|^2}\tau }\le \|\tilde{S}\|_{\mathscr{B}([H^1_r((0,\tau);Y)]';L^2((0,\tau );Y))}.
\end{equation}
On the other hand according to \cite[Proposition 2.13, page 1641]{TW} $\Psi $ possesses a unique bounded extension, denoted by $\tilde{\Psi}$, from $\mathcal{Z}'$ into $[H^1_r((0,\tau);Y)]'$ and there exists a constant $c>0$ so that
\begin{equation}\label{isp15}
\|\tilde{\Psi}\|_{\mathcal{B}(\mathcal{Z}';[H^1_r((0,\tau);Y)]')}\geq c.
\end{equation}
The operator $\tilde{E}=\tilde{S}\tilde{\Psi}$ gives the unique extension of $E$ to an isomorphism from $\mathcal{Z}'$ onto $L^2((0,\tau ),Y)$. 

We end up the proof by noting that \eqref{isp12} follows from \eqref{isp14} and \eqref{isp15}.
\end{proof}

\section{Inverse  problems for evolution equations associated to Laplace-Beltrami operator}

Throughout this section $M$ is a compact $n$-dimensional Riemannian manifold with boundary, $\tau >0$ and $\Gamma$ is a nonempty open subset of $\partial M$.

\subsection{Inverse source problem for the wave equation}

Consider the  IBVP  for the wave equation
\begin{equation}\label{w1.0}
\left\{
\begin{array}{lll}
 \partial _t^2 u - \Delta u + q(x)u + a(x) \partial_t u = g(t)f(x) \quad &\mbox{in}\;   M \times (0,\tau), 
 \\
u = 0 &\mbox{on}\;  \partial M \times (0,\tau), 
\\
u(\cdot ,0) = 0,\; \partial_t u (\cdot ,0) = 0.
\end{array}
\right.
\end{equation}

Assume that $(\Gamma ,\tau)$ geometrically control $M$. Fix $(q_0,a_0)\in L^{\infty}(M)\oplus L^\infty(M)$ and denote by $2\kappa$ the observability constant for $(q_0,a_0)$. In light of Theorem \ref{theorem-O1} there exists a constant $\beta>0$ only depending on $\Gamma$,  $M$ and  $(q_0,a_0)$ such that, for any $(q,a)=(q_0,a_0)+(\tilde{q},\tilde{a})$  with  $(\tilde{q},\tilde{a})\in L^{\infty}(M)\oplus L^\infty(M)$ satisfying 
\begin{equation}\label{uo}
\|(\tilde{q},\tilde{a})\|_{ L^\infty (M)\times L^\infty (M)}\le \beta,
\end{equation}
the observability contant for $(q,a)$ is $\kappa$. We denote the set of couples $(q,a)\in L^{\infty}(M)\oplus L^\infty(M)$ of the form $(q,a)=(q_0,a_0)+(\tilde{q},\tilde{a})$, where $(\tilde{q},\tilde{a})\in L^{\infty}(M)\oplus L^\infty(M)$ satisfies \eqref{uo}, by $\mathscr{D}$. 

Let $f\in L^2(M)$ and $g\in H^1(0,\tau )$ with $g(0)\ne 0$. We have according to Theorem \ref{theorem-isp1}
\begin{equation}\label{w0}
\|f\|_{L^2(M)}\le \frac{\sqrt{2}}{\kappa |g(0)|}e^{\tau\frac{\|g'\|_{L^2(0,\tau)}^2}{|g(0)|^2}}\|\partial _\nu u\|_{H^1((0,\tau ),L^2(\Gamma ))},
\end{equation}
 where $u=u(q,a,f,g)$ denotes the solution of the IBVP \eqref{w1.0}.

An immediate consequence of this inequality is the following theorem.

\begin{theorem}\label{theorem-w1.0}
Assume that $(\Gamma ,\tau )$ geometrically control $M$. Let $g\in H^1(0,\tau )$ satisfying $g(0)\ne 0$. Then there exists a constant $C$, only depending on $(q_0,a_0)$, $\kappa$, $\Gamma$, $\tau$ and $g$, so that for any $(q,a)\in \mathscr{D}$ we have
\[
\|f\|_{L^2(M)}\le C\|\partial _\nu u\|_{H^1((0,\tau ),L^2(\Gamma ))}.
\]
Here $u=u(q,a,f,g)$ denotes the solution of the IBVP \eqref{w1.0}.
\end{theorem}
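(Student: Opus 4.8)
The plan is to obtain Theorem \ref{theorem-w1.0} as an immediate corollary of the estimate \eqref{w0}, which itself is the concrete instantiation of the abstract inverse source result Theorem \ref{theorem-isp1} (more precisely its perturbed variant Theorem \ref{theorem-isp2}) to the wave setting. First I would recall that, after writing the wave equation \eqref{w1.0} as a first order system on the energy space $\mathcal{H}_0=H_0^1(M)\oplus L^2(M)$, the unperturbed generator is $A_0=\begin{pmatrix}0 & I\\ \Delta & 0\end{pmatrix}$ with observation $C(u,v)=\partial_\nu u|_\Gamma$, and the lower order terms $q u + a\partial_t u$ are encoded by a bounded perturbation $P=P_{q,a}\in\mathscr{B}(\mathcal{H}_0)$ whose norm is controlled by $\|(q,a)\|_{W^{1,\infty}(M)\oplus L^\infty(M)}\le N$. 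The geometric control assumption gives, via Theorem \ref{theorem-O1}, exact observability of $(A_0,C)$ for the time $\tau$, and the perturbation argument recalled before Theorem \ref{theorem-isp2} produces a uniform observability constant $\kappa=\kappa(N)>0$ valid for all admissible $(q,a)$.

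Next I would identify the source term $g(t)f(x)$ with the right hand side $\lambda(t)x$ of \eqref{isp4}: here $\lambda=g\in H^1(0,\tau)$ with $g(0)\ne 0$, $x=f\in L^2(M)\hookrightarrow \mathcal{H}_0$ (via the second component), and the solution of \eqref{w1.0} with zero Cauchy data is exactly $z(t)$ from \eqref{isp4}, so that $\partial_\nu u|_{\Gamma\times(0,\tau)}=y=E^{P}f$ in the notation of Theorem \ref{theorem-isp2}. Applying \eqref{isp9} with $P=P_{q,a}$ then yields
\[
\|f\|_{L^2(M)}\le \frac{\sqrt2}{\kappa|g(0)|}e^{\tau\frac{\|g'\|_{L^2(0,\tau)}^2}{|g(0)|^2}}\|\partial_\nu u\|_{H^1_\ell((0,\tau),L^2(\Gamma))},
\]
which is precisely \eqref{w0}; since $H^1_\ell((0,\tau),L^2(\Gamma))\subset H^1((0,\tau),L^2(\Gamma))$ with the same norm, the stated inequality follows. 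Finally, setting
\[
C=\frac{\sqrt2}{\kappa|g(0)|}e^{\tau\frac{\|g'\|_{L^2(0,\tau)}^2}{|g(0)|^2}},
\]
which depends only on $\kappa$ (hence on $N$), on $g$ and on $\tau$ (and on $\Gamma$ through $\kappa$) but not on the particular $(q,a)$, gives the theorem.

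The only genuinely non-routine point is checking that the admissibility hypotheses of Theorem \ref{theorem-isp1}/\ref{theorem-isp2} are met in this concrete situation: one must verify that $C(u,v)=\partial_\nu u|_\Gamma$ is an admissible observation operator for the damped wave semigroup, i.e. that the map $(u_0,u_1)\mapsto \partial_\nu u|_{\Gamma\times(0,\tau)}$ is bounded from $\mathcal{H}_0$ into $L^2(\Gamma\times(0,\tau))$. But this is exactly the hidden regularity estimate \eqref{O4} (equivalently \eqref{O3} combined with the energy estimate \eqref{O2}), so nothing new is required. One should also note that $f\in L^2(M)$ embeds into $\mathcal{H}_0$ as $(0,f)$ so that $\|f\|_{L^2(M)}=\|(0,f)\|_{\mathcal{H}_0}$, which is why the $\mathcal{H}_0$-norm in \eqref{isp9} reduces to the $L^2(M)$-norm of $f$. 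With these identifications in place, the proof is a one-line application of Theorem \ref{theorem-isp1} and requires no further estimates.
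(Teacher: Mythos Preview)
Your proposal is correct and follows exactly the paper's approach: the paper derives the theorem as an ``immediate consequence'' of inequality \eqref{w0}, which is itself obtained by applying the abstract Theorem~\ref{theorem-isp1} (with the uniform observability constant $\kappa$ coming from Theorem~\ref{theorem-O1}) to the first-order reformulation of the wave IBVP~\eqref{w1.0}. Your write-up is in fact more detailed than the paper's, spelling out the identifications $(A,C,P,\lambda,x)$ and the role of the hidden regularity estimate \eqref{O4} for admissibility, but the route is the same.
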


Set for simplicity $v=v(q,f,g)=u(q,0,f,g)$. That is $v$ is the solution of the IBVP
\begin{equation}\label{w1.0+}
\left\{
\begin{array}{lll}
 \partial _t^2 u - \Delta u + q(x)u = g(t)f(x) \quad &\mbox{in}\;   M \times (0,\tau), 
 \\
u = 0 &\mbox{on}\;  \partial M \times (0,\tau), 
\\
u(\cdot ,0) = 0,\quad \partial_t u (\cdot ,0) = 0.
\end{array}
\right.
\end{equation}

Using Duhamel's formula it is not hard to check that
\[
v(x,t)=\int_0^t g(t-s)w(x,s)ds,
\]
where $w=w(f)$ is the solution of the IBVP
\begin{equation}\label{w1.0bis}
\left\{
\begin{array}{lll}
 \partial _t^2 w - \Delta w + q(x)w =0 \quad &\mbox{in}\;   M \times (0,\tau), 
 \\
w = 0 &\mbox{on}\;  \partial M \times (0,\tau), 
\\
w(\cdot ,0) = f,\; \partial_t w (\cdot ,0) = 0.
\end{array}
\right.
\end{equation}

Let
\[
H^1_\ell ((0,\tau), L^2(\Gamma )) = \left\{u \in H^1((0,\tau), L^2(\Gamma )); \; u(0) = 0 \right\}
\]

and define the operator $S:L^2(\Gamma \times (0,\tau ) )\longrightarrow H^1_\ell ((0,\tau ) ,L^2(\Gamma ))$ by
\begin{equation*}
(Sh)(t)=\int_0^t g(t-s)h(s)ds.
\end{equation*}

We have seen in the proof of Theorem \ref{theorem-isp1} that $S$ is an isomorphism and
\[
\|h\|_{L^2(\Gamma \times (0,\tau) )}\leq \frac{\sqrt{2}}{\kappa |g(0)|}e^{\tau\frac{\|g'\|_{L^2((0,\tau))}^2}{|g(0)|^2}}\|Sh \|_{H^1((0,\tau ),L^2(\Gamma ))}.
\]
Whence
\begin{equation}\label{w2.0}
\|\partial_\nu w\|_{L^2(\Gamma \times (0,\tau) )}\le \frac{\sqrt{2}}{\kappa |g (0)|}e^{\tau\frac{\|g '\|_{L^2((0,\tau))}^2}{|g(0)|^2}}\| \partial_\nu v\|_{H^1 ((0,\tau ),L^2(\Gamma ))}.
\end{equation}

Let $\aleph>0$, assume that $\mathbf{d}(\Gamma )<\infty$ and let  $\tau >2\mathbf{d}(\Gamma )$. From Theorem \ref{theorem-O2} there exist three constants $C$, $\kappa$ and  $\epsilon _0$ so that for any $q\in L^\infty (M)$ with $\|q\|_{L^\infty (M)}\le \aleph$ we have
\begin{equation}\label{w3.0}
C\|f\|_{L^2(M)}\le e^{\kappa \epsilon}\| \partial _\nu w\|_{L^2(\Gamma \times (0,\tau ))}+\frac{1}{\epsilon}\|f\|_{H_0^1 (M)},\quad  \epsilon \ge \epsilon_0.
\end{equation}

Now \eqref{w2.0} in \eqref{w3.0} yields
\begin{equation}\label{w4.0}
C\|f\|_{L^2(M)}\le e^{\kappa \epsilon}\frac{\sqrt{2}}{\kappa |g (0)|}e^{\tau\frac{\|g '\|_{L^2((0,\tau))}^2}{|g(0)|^2}}\| \partial_\nu v\|_{H^1 ((0,\tau ),L^2(\Gamma ))}+\frac{1}{\epsilon}\|f\|_{H_0^1 (M)}
\end{equation}
for any $\epsilon \ge \epsilon_0$.

Let 
\[
\Psi (\rho )=|\ln \rho\, |^{-1}+\rho,\quad  \rho>0,
\]
and $\Psi (0)=0$. Then a standard minimization argument with respect to $\epsilon$ in \eqref{w4.0} enables us establishing the following result.

\begin{theorem}\label{theorem-w2.0}
Let $\aleph>0$, $R >0$ and  $\tau >2\mathbf{d}(\Gamma )$. Let $g\in H^1(0,\tau )$ satisfying $g(0)\ne 0$. Then there exists a constant $C>0$, only depending on $\aleph$, $R$, $\Gamma$, $\tau$ and $g$, so that for any $q\in L^\infty (M)$ with $\|q\|_{L^\infty (M)}\le \aleph$ and any $f\in H_0^1(M)$ satisfying $\|f\|_{H_0^1(M)}\le R$ we have
\[
\|f\|_{L^2(M)}\le C\Psi \left(  \| \partial_\nu v\|_{H^1 ((0,\tau ),L^2(\Gamma ))}\right),
\]
where $v=v(q,f,g)$ is the solution of the IBVP \eqref{w1.0+}.
\end{theorem}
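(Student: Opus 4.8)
The plan is to run a standard optimization (equivalently, interpolation) argument on the two-term inequality \eqref{w4.0}, exploiting the freedom in the parameter $\epsilon \ge \epsilon_0$. First I would record what \eqref{w4.0} gives once the data is normalized: writing $\rho = \|\partial_\nu v\|_{H^1((0,\tau),L^2(\Gamma))}$ and using $\|f\|_{H_0^1(M)} \le R$, there is a constant $C_1 = C_1(N,\Gamma,\tau,g) > 0$ and a constant $\kappa > 0$ (from Theorem \ref{theorem-O2}) such that
\[
\|f\|_{L^2(M)} \le C_1\left( e^{\kappa \epsilon}\rho + \frac{R}{\epsilon}\right),\qquad \epsilon \ge \epsilon_0 .
\]
The right-hand side is a sum of an increasing and a decreasing function of $\epsilon$; the near-optimal choice is the one that roughly balances the two exponential/polynomial scales, namely $\epsilon \sim \frac{1}{2\kappa}|\ln \rho|$ when $\rho$ is small.

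Next I would treat the two regimes separately. For $\rho$ small (say $\rho \le \rho_0$ for some threshold depending on $\kappa, \epsilon_0$), the choice $\epsilon = \frac{1}{2\kappa}|\ln \rho| \ge \epsilon_0$ is admissible and yields $e^{\kappa\epsilon}\rho = \rho^{1/2} \le |\ln\rho|^{-1}$ up to a harmless constant for $\rho$ small, while $\frac{R}{\epsilon} = \frac{2\kappa R}{|\ln\rho|}$; hence both terms are controlled by $C_2\,|\ln\rho|^{-1} \le C_2\,\Psi(\rho)$. For $\rho \ge \rho_0$ bounded away from $0$, one simply uses the a priori bound: $\|f\|_{L^2(M)} \le \|f\|_{H_0^1(M)} \le R \le \frac{R}{\rho_0}\rho \le \frac{R}{\rho_0}\Psi(\rho)$ since $\Psi(\rho) \ge \rho$. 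Combining the two regimes gives $\|f\|_{L^2(M)} \le C\,\Psi(\rho)$ with $C = C(N,R,\Gamma,\tau,g)$, which is the claim. One should also note the degenerate cases $\rho = 0$ (then \eqref{w4.0} with $\epsilon \to \infty$ forces $f = 0$ in $L^2(M)$, consistent with $\Psi(0)=0$) and check that $\Psi$ is monotone nondecreasing near $0$ so the estimate is meaningful.

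The only genuinely delicate point is bookkeeping the constants: the factor $e^{\kappa\epsilon}$ in \eqref{w4.0} already carries the constant $\frac{\sqrt{2}}{\kappa|g(0)|}e^{\tau\|g'\|^2_{L^2}/|g(0)|^2}$ in front, so after substituting $\epsilon = \frac{1}{2\kappa}|\ln\rho|$ one must verify that the resulting prefactor stays bounded uniformly over the admissible class $\|q\|_{L^\infty(M)}\le N$ — it does, because $\kappa$ from Theorem \ref{theorem-O2} and the constant $C$ there depend only on $N$, $\Gamma$, $\tau$ (and $g$ enters only through the explicit $g$-factors). There is no analytic obstacle beyond this; the substance of the theorem is entirely contained in the observability inequality \eqref{O6} (Theorem \ref{theorem-O2}) and the isomorphism estimate \eqref{w2.0} transferring information from $w$ to $v$, both of which are available. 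I would present the balancing step cleanly and relegate the constant-tracking to a remark.
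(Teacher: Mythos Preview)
Your proposal is correct and is precisely the ``standard minimization argument, with respect to $\epsilon$'' that the paper invokes (without details) immediately before the statement; the paper gives no further proof. Your choice $\epsilon \sim \frac{1}{2\kappa}|\ln\rho|$ for small $\rho$ together with the trivial bound for $\rho$ bounded away from zero is exactly what is meant, and your treatment of the constants and degenerate case $\rho=0$ is appropriate.
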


\subsection{Determining the potential and the damping coefficient in a wave equation}

Introduce the  IBVP  for the wave equation
\begin{equation}\label{w1}
\left\{
\begin{array}{lll}
 \partial _t^2 u - \Delta u + q(x)u + a(x) \partial_t u = 0 \quad &\mbox{in}\;   M \times (0,\tau), 
 \\
u = 0 &\mbox{on}\;  \partial M \times (0,\tau), 
\\
u(\cdot ,0) = u_0,\quad \partial_t u (\cdot ,0) = u_1.
\end{array}
\right.
\end{equation}

Let $\aleph>0$ and recall that $\mathcal{H}_0=H_0^1(M) \oplus L^2(M)$. We have seen in Section 1 that, for any $(q,a)\in L^\infty (M)\oplus L^\infty (M)$, $\tau >0$ and $(u_0,u_1)\in \mathcal{H}_0$, the IBVP \eqref{w1} has a unique solution 
 \[ 
 u=u(q,a, (u_0,u_1))\in C([0,\tau ],H_0^1(M ))\] so that $\partial _tu\in C([0,\tau ],L^2(M ))$ and $\partial_\nu u\in L^2(\partial M \times (0,\tau))$. Moreover under the assumption
 \[
 \|(q,a)\|_{L^\infty (M)\oplus L^\infty (M)}  \le \aleph
 \]
we have
\begin{equation}\label{w2}
\|u\|_{C([0,\tau ],H_0^1(M))}+\|\partial _t u\|_{C([0,\tau ],L^2(M))}\le C\|(u_0,u_1)\|_{\mathcal{H}_0}
\end{equation}
and
\begin{equation}\label{w3}
\|\partial_\nu u\|_{L^2( \partial M \times (0,\tau))}\le C\|(u_0,u_1)\|_{\mathcal{H}_0}.
\end{equation}
Here $C=C(\aleph)$ is a nondecreasing function.

Define the initial-to-boundary operator $\Lambda (q,a)$ as follows
\[
\Lambda (q,a): (u_0,u_1)\in \mathcal{H}_0 \mapsto \partial_\nu u(q,a,(u_0,u_1))\in L^2(\Gamma \times (0,\tau )).
\] 

Let 
\[\mathcal{H}_1=(H_0^1(M)\cap H^2(M))\oplus H_0^1(M).\]
Observing that 
\[
\partial _t u(q,a,(u_0,u_1))=u(q,a, (u_1,\Delta u_0 - q u_0 -au_1))
\]
we easily obtain that $\Lambda (q,a)\in \mathscr{B}(\mathcal{H}_1,H^1((0,\tau ),L^2(\Gamma )))$. Furthermore  we get as a consequence of \eqref{w3}
\[
\|\Lambda (q,a)\|_{\mathscr{B}(\mathcal{H}_1,H^1((0,\tau ),L^2(\Gamma )))}\le C,
\]
where the constant $C$ is similar to that in \eqref{w3}.

Denote by $\mathscr{D}_0$ the set $\mathscr{D}$ in the case where $(q_0,a_0)=(q_0,0)$ with $q_0\ge 0$. Define then $\mathscr{D}_1 (\aleph)$ as the subset of $\mathscr{D}_0$ consisting in couples $(q,a)\in H^2(M)\oplus H^2(M)$ satisfying 
\[
\|(q,a)\|_{H^2(M)\oplus H^2(M)}\le \aleph.
\]
It is then clear that $\mathscr{D}_1(\aleph)$ is nonempty provided that $\aleph \ge \aleph_0$, for some $\aleph_0=\aleph(\beta)$.

\begin{theorem}\label{theorem-w1}
Assume that $(\Gamma ,\tau)$ geometrically control $M$ and let $\aleph \ge \aleph_0$. There exists a constant $C>0$, depending on $\aleph$ and $q_0$, so that for any $(q,a)\in \mathscr{D}_1(\aleph )$ we have 
\[
\|q-q_0\|_{L^2(M)}+\|a-0\|_{L^2(M)}\le C\|\Lambda (q,a)-\Lambda (q_0,0)\|_{\mathscr{B}(\mathcal{H}_1,H^1((0,\tau ),L^2(\Gamma )))}^{1/2}.
\]
\end{theorem}

\begin{proof}
Let $0\le \phi_1$  be the first eigenfunction of the operator $-\Delta +q_0$ with domain $H^2(M)\cap H_0^1(M)$. This eigenfunction is normalized by $\|\phi_1\|_{L^2(M)}=1$.  If 
\[
u_0=u(q_0,0, (\phi_1,i\sqrt{\lambda _1}\phi_1))=e^{i\sqrt{\lambda _1}\, t}\phi_1\quad \mbox{and}\quad u=u(q,a,(\phi_1,i\sqrt{\lambda _1}\, \phi_1))
\]
then $v= u-u_0$ is the solution of the following IBVP
\begin{equation}\label{w4}
\left\{
\begin{array}{lll}
 \partial _t^2 v - \Delta v + qv + a \partial_t v = -[(q-q_0)+i\sqrt{\lambda _1}a]e^{i\sqrt{\lambda _1}t}\phi_1 \quad &\mbox{in}\;   M\times (0,\tau), 
 \\
v = 0 &\mbox{on}\;  \partial M \times (0,\tau), 
\\
v(\cdot ,0) = 0,\quad \partial_t v (\cdot ,0) = 0.
\end{array}
\right.
\end{equation}
Bearing in mind that $(\Gamma ,\tau)$ geometrically control $M$ we get from Theorems \ref{theorem-w1.0} 
\[
\| \phi _1(q-q_0)\|_{L^2(M)}+\|\phi_1a\|_{L^2(M)}\le C\|\partial _\nu v\|_{H^1((0,\tau ),L^2(\partial M ))}.
\]
This inequality combined with Corollary \ref{corollary-wii1} yields
\[
\|q-q_0\|_{L^2(M)}+\|a-0\|_{L^2(M)}\le
C \|\partial _\nu v\|_{H^1((0,\tau ),L^2(\Gamma ))}^{1/2}
\]
which gives in a straightforward manner the expected result.
\end{proof}

Denote the sequence of eigenvalues, counted according to their multiplicity, of $A=-\Delta$ with domain $H^2(M)\cap H_0^1(M)$ by $0<\lambda _1<\lambda _2\leq \ldots \le \lambda _k \rightarrow \infty$.
 
Consider on $\mathcal{H}_0$ the operators 
\[ 
\mathcal{A}=\left( 
\begin{array}{cc}
0 & I  \\
-A  & 0  \\
 \end{array} 
 \right),\;\; D(\mathcal{A})=\mathcal{H}_1
 \]
 and $\mathcal{A}(q,a)=\mathcal{A}+\mathcal{B}(q,a)$ with $D(\mathcal{A}(q,a))=D(\mathcal{A})$, where
 \[ 
\mathcal{B}(q,a)=\left( 
\begin{array}{cc}
0 & 0 \\
-q & -a  \\
 \end{array} 
 \right) \in \mathscr{B}(\mathcal{H}_0).
 \]
 
From \cite[Proposition 3.7.6, page 100]{TW} $\mathcal{A}$ is skew-adjoint operator with $0\in \rho (\mathcal{A})$ and
\[ 
\mathcal{A}^{-1}=\left( 
\begin{array}{cc}
0 & -A^{-1}  \\
I  & 0  \\
 \end{array} 
 \right).
 \]
We note that, since $\mathcal{A}^{-1}:\mathcal{H} \rightarrow \mathcal{H}_1$ is bounded and the embedding $\mathcal{H}_1\hookrightarrow \mathcal{H}$ is compact, $\mathcal{A}^{-1}:\mathcal{H} \rightarrow \mathcal{H}$ is compact.

Also, from \cite[Proposition 3.7.6, page 100]{TW} $\mathcal{A}$ is diagonalizable and its spectrum consists in the sequence $(i\sqrt{\lambda _k})$.

Introduce the bounded operator 
\[
\mathcal{C}(q,a)=(i\mathcal{A}^{-1})(-i\mathcal{B}(q,a))(i\mathcal{A}^{-1}). 
\]
Let $s_k(\mathcal{C}(q,a))$, $k\ge 1$, denote the singular values of $\mathcal{C}(q,a)$, that is the eigenvalues of $[\mathcal{C}(q,a)^\ast \mathcal{C}(q,a)]^{\frac{1}{2}}$. In light of \cite[formulas (2.2) and (2.3), page 27]{GK} we have
\[
s_k(\mathcal{C}(q,a))\le \|\mathcal{B}(q,a)\|s_k(i\mathcal{A}^{-1})^2=\|\mathcal{B}(q,a)\|\lambda _k^{-1} ,
\]
where $\|\mathcal{B}(q,a)\|$ denote the norm of $\mathcal{B}(q,a)$ in $\mathscr{B}(\mathcal{H})$.

On the other hand referring to Weyl's asymptotic formula we get $\lambda _k =O(k^{2/n})$. Hence $\mathcal{C}_{q,a}$ belongs to the Shatten class $\mathcal{S}_p$ for any $p>n/2$, that is
\[
\sum_{k\ge 1}\left[s_k(\mathcal{C}(q,a))\right]^p<\infty .
\]
We get by applying \cite[Theorem 10.1, page 276]{GK} that the spectrum of $\mathcal{A}(q,a)$ consists in a sequence of eigenvalues $(\mu_k(q,a))$, counted according to their multiplicity, and the corresponding eigenfunctions $(\phi_k(q,a))$ form a Riesz basis of $\mathcal{H}$.

Fix $(q,a)$ and $k$. Set $\mu  =\mu _k(q,a)$ and $\phi= \phi_k(q,a) =(\varphi  ,\psi )\in \mathcal{H}_1$ be an eigenfunction associated to $\mu$. Then it is straightforward to check that
$\psi =\mu \varphi$ and $(-\Delta +q+a\mu +\mu ^2)\varphi=0$ in $M$. Since $-\Delta \varphi =f$ in $M$ with $f= - (q+a\mu +\mu ^2)\varphi$ we can use iteratively \cite[Corollary 7.11, page 158]{GT} (Sobelev embedding theorem) together with \cite[Theorem 9.15, page 241]{GT} in order to obtain that $\varphi \in W^{2,p}(M)$ for any $1<p<\infty$. In particular $\varphi ,|\varphi |^2\in W^{2,n}(M)\cap C^0(M)$. In other words $\varphi$ satisfies the assumption of Proposition \ref{proposition-wii2}.

Set, for $(q,a), (\tilde{q},\tilde{a})\in \mathscr{D}$,
\[
u=u(q,a, \phi)\quad \mbox{and}\quad \tilde{u}=u(\tilde{q},\tilde{a},\phi ).
\]
Then similarly to Theorem \ref{theorem-w1} we prove, where $v= \tilde{u}-u$,
\[
\| \varphi (\tilde{q}-q)\|_{L^2(M)}+\|\varphi (\tilde{a}-a)\|_{L^2(M)}\le C \|\partial _\nu v\|_{H^1((0,\tau ),L^2(\partial M ))}.
\]
This and Lemma \ref{lemma-wii1} yield 

\begin{theorem}\label{theorem-w2}
Assume that $(\Gamma ,\tau)$ geometrically control $M$ and fix $(q,a)\in \mathscr{D}$. Then there exists two constants $C>0$ and $\alpha >0$, depending of $(q,a)$, so that for any $( \tilde{q}, \tilde{a})\in \mathscr{D}$ we have
\[
\| \tilde{q}-q\|_{L^2(M)}+\|\tilde{a}-a\|_{L^2(M)}\le C\|\Lambda (\tilde{q},\tilde{a})-\Lambda (q,a)\|_{\mathscr{B}(\mathcal{H}_1,H^1((0,\tau ),L^2(\Gamma )))}^\alpha.
\]
\end{theorem}

\subsection{Determining the potential in a wave equation without geometric control condition}

Consider the IBVP
\begin{equation}\label{w5}
\left\{
\begin{array}{lll}
 \partial _t^2 u - \Delta u + q(x)u = 0 \quad &\mbox{in}\;   M \times (0,\tau), 
 \\
u = 0 &\mbox{on}\;  \partial M \times (0,\tau), 
\\
u(\cdot ,0) = u_0,\quad \partial_t u (\cdot ,0) = 0.
\end{array}
\right.
\end{equation}

From the preceding subsection the initial-to-boundary mapping
\[
\Lambda (q): u_0\in H_0^1(M)\cap H^2(M)\mapsto \partial_\nu u\in H^1((0,\tau ),L^2(\Gamma )),
\]
where $u=u(q,u_0)$ is the solution on the IBVP, defines a bounded operator. Moreover for any $\aleph>0$ there exists a constant $C>0$, depending of $\aleph$, so that for any $q\in L^\infty (M)$ satisfying $\|q\|_{L^\infty (M)}\le \aleph$ we have
\[
\|\Lambda (q)\|_{\mathscr{B}(H_0^1(M)\cap H^2(M),H^1((0,\tau ),L^2(\Gamma )))}\le C.
\]

\begin{theorem}\label{theorem-w3}
Let $\aleph >0$ and suppose that $\tau >2\mathbf{d}(\Gamma )$. There exists a constant $C>0$ so that for any $0\le q\in L^\infty (M)$, $\tilde{q}\in L^\infty (M)$ satisfying $q-\tilde{q}\in W^{1,\infty}(M)$ and
\[
\|q\|_{L^\infty(M)}\le \aleph,\quad  \|\tilde{q}\|_{L^\infty(M)}\le \aleph,\quad \|q-\tilde{q}\|_{W^{1,\infty}(M)}\le \aleph
\]
we have
\[
\|q-\tilde{q}\|_{L^2(M)}\le C\Phi \left(\|\Lambda (q)-\Lambda (\tilde{q})\|_{\mathscr{B}(H_0^1(M)\cap H^2(M),H^1((0,\tau ),L^2(\Gamma )))}\right),
\]
with $\Phi (\rho)=|\ln \rho |^{-1/(n+3)}+\rho$, $\rho >0$, and $\Phi (0)=0$.
\end{theorem}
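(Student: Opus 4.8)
The plan is to follow the same scheme as the proof of Theorem~\ref{theorem-w2}, but replace the exact observability (which is unavailable since we do not assume the geometric control condition) by the weak observability inequality of Theorem~\ref{theorem-O2}, and consequently replace the weighted interpolation of Corollary~\ref{corollary-wii1} by the weaker one of Lemma~\ref{lemma-wii2}. Concretely, fix a normalized positive first eigenfunction $\phi_1$ of $-\Delta+q$ with eigenvalue $\lambda_1$, so that $u(q,\phi_1)=e^{i\sqrt{\lambda_1}t}\phi_1$ (after passing to the second-order complex formulation, or equivalently using $\cos(\sqrt{\lambda_1}t)\phi_1$ in the real setting of \eqref{w5}). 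Setting $\widetilde u=u(\widetilde q,\phi_1)$ and $v=\widetilde u-u$, we get that $v$ solves an IBVP of the form \eqref{w1.0+} with $q$ there equal to $\widetilde q$ and source $g(t)f(x)=-(\widetilde q-q)e^{i\sqrt{\lambda_1}t}\phi_1$, i.e. $f=(\widetilde q-q)\phi_1$ and $g(t)=-e^{i\sqrt{\lambda_1}t}$, which satisfies $g(0)\neq 0$.

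First I would invoke Theorem~\ref{theorem-w2.0}: since $\mathbf d(\Gamma)<\infty$ and $\tau>2\mathbf d(\Gamma)$, and since $f=(\widetilde q-q)\phi_1\in H_0^1(M)$ with an a priori bound $\|f\|_{H_0^1(M)}\le R$ coming from $\|q-\widetilde q\|_{W^{1,\infty}(M)}\le N$ and elliptic regularity bounds on $\phi_1$ (here one uses $q-\widetilde q\in W^{1,\infty}(M)$, which is exactly why this hypothesis is imposed rather than just an $L^\infty$ bound), we obtain
\[
\|(\widetilde q-q)\phi_1\|_{L^2(M)}\le C\,\Psi\!\left(\|\partial_\nu v\|_{H^1((0,\tau),L^2(\Gamma))}\right),
\]
with $\Psi(\rho)=|\ln\rho|^{-1}+\rho$. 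Next I would bound $\|\partial_\nu v\|_{H^1((0,\tau),L^2(\Gamma))}$ by $\|\Lambda(\widetilde q)-\Lambda(q)\|$ applied to the fixed data $u_0=\phi_1$: indeed $\partial_\nu v=\big(\Lambda(\widetilde q)-\Lambda(q)\big)\phi_1$ (for the pair $(\phi_1,0)$, in the $\mathcal H_1$-norm of which $\phi_1$ is controlled by $\lambda_1$), so $\|\partial_\nu v\|_{H^1((0,\tau),L^2(\Gamma))}\le C\|\Lambda(\widetilde q)-\Lambda(q)\|$. Since $\Psi$ is nondecreasing near $0$, this gives $\|(\widetilde q-q)\phi_1\|_{L^2(M)}\le C\,\Psi(\|\Lambda(\widetilde q)-\Lambda(q)\|)$.

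Then I would remove the weight $\phi_1$ using Lemma~\ref{lemma-wii2} with $\varphi=\phi_1$ (which satisfies $(\Delta-q)\phi_1=-\lambda_1\phi_1$, i.e. $(\Delta+p)\phi_1=0$ with $p=-q-\lambda_1\in L^\infty(M)$, and $\phi_1,\phi_1^2\in W^{2,n}(M)$ by the elliptic regularity already used for eigenfunctions in the excerpt), applied to $f=\widetilde q-q\in L^\infty(M)$: for the associated exponent $\delta=\delta(\phi_1)>0$,
\[
\|\widetilde q-q\|_{L^2(M)}\le C\,\|\widetilde q-q\|_{L^\infty(M)}^{\frac{2}{2+\delta}}\,\|(\widetilde q-q)\phi_1\|_{L^2(M)}^{\frac{\delta}{2+\delta}}\le C\,\Psi\!\left(\|\Lambda(\widetilde q)-\Lambda(q)\|\right)^{\frac{\delta}{2+\delta}},
\]
using $\|\widetilde q-q\|_{L^\infty(M)}\le 2N$. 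Finally one checks the elementary fact that for a suitable exponent, $\Psi(\rho)^{\theta}\le C\,\Phi(\rho)$ near $0$ with $\Phi(\rho)=|\ln\rho|^{-1/(n+3)}+\rho$: since $\Psi(\rho)\le C|\ln\rho|^{-1}$ for small $\rho$, one has $\Psi(\rho)^\theta\le C|\ln\rho|^{-\theta}$, and it suffices that the value of $\delta$ produced by Proposition~\ref{proposition-wii2} be bounded below in terms of $n$ and the a priori constants so that $\frac{\delta}{2+\delta}\ge \frac{1}{n+3}$ (which is where the exponent $n+3$ enters; tracking the dependence of $\delta$ on the doubling/vanishing order via the Sobolev exponent $n$ is what fixes this). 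I expect the main obstacle to be precisely this last bookkeeping: making the constants in Theorem~\ref{theorem-w2.0} and in Lemma~\ref{lemma-wii2} uniform over the admissible class (so that $C$ depends only on $N$, not on the particular $q$), and pinning down the quantitative lower bound on $\delta$ in terms of $n$ that yields the stated $\Phi$. The rest is a routine concatenation of the cited inequalities.
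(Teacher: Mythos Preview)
Your approach is genuinely different from the paper's, and the step you flag as ``bookkeeping'' is in fact the heart of the matter and does not go through as you suggest.

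The paper does \emph{not} use a single eigenfunction and the weighted interpolation of Lemma~\ref{lemma-wii2}. Instead it runs the argument for \emph{every} eigenfunction $\phi_k$ of $-\Delta+q$: with $u(q,\phi_k)=\cos(\sqrt{\lambda_k}\,t)\phi_k$, the difference $v=u(\widetilde q,\phi_k)-u(q,\phi_k)$ solves \eqref{w1.0+} with source $(\widetilde q-q)\phi_k\,g_k(t)$, and one applies \eqref{w4.0} directly (not Theorem~\ref{theorem-w2.0}) to get, for each $k$,
\[
C\|(\widetilde q-q)\phi_k\|_{L^2(M)}\le e^{\kappa\epsilon}e^{ck^{2/n}}\|\partial_\nu v\|_{H^1((0,\tau),L^2(\Gamma))}+\epsilon^{-1}\|(\widetilde q-q)\phi_k\|_{H^1_0(M)},
\]
where the $k^{2/n}$ comes from Weyl's asymptotics $\lambda_k\asymp k^{2/n}$ entering through $\|g_k'\|_{L^2}^2\le\lambda_k\tau$. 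Using $\|(\widetilde q-q)\phi_k\|_{H^1_0}\le ck^{1/n}$ (this is where $q-\widetilde q\in W^{1,\infty}$ is used) and Cauchy--Schwarz, one bounds the Fourier coefficients $(\widetilde q-q,\phi_k)^2$. Summing for $k\le\ell$, controlling the tail $\sum_{k>\ell}(\widetilde q-q,\phi_k)^2$ by $\lambda_{\ell+1}^{-1}$ times an $H^1$-type norm, passing to a continuous parameter $s$, choosing $\epsilon=s^{1+3/n}$ and optimizing in $s$ produces exactly the exponent $\tfrac{1}{n+3}$. All constants are uniform in $q,\widetilde q$ because the only $q$-dependent input is the Weyl two-sided bound \eqref{w6}, which depends only on $N$.

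Your route via Lemma~\ref{lemma-wii2} cannot deliver this. The constant $C$ and the exponent $\delta$ there depend on $\varphi=\phi_1$, hence on $q$; the paper makes no claim (and none is available) that $\delta(\phi_1)$ admits a lower bound of the form $\tfrac{\delta}{2+\delta}\ge\tfrac{1}{n+3}$ uniformly over $\|q\|_{L^\infty}\le N$. Your proposed mechanism for the appearance of $n+3$ (``tracking the dependence of $\delta$ on the doubling/vanishing order via the Sobolev exponent $n$'') is not how the exponent arises: in the paper it comes purely from Weyl's formula $\lambda_k\asymp k^{2/n}$ and the balance between the three terms $s^{-2/n}$, $s^{1+1/n}/\epsilon$, and $e^{cs^{1+3/n}}$. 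So the ``last bookkeeping'' you anticipate is actually a different proof strategy altogether.
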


\begin{proof}
Let $0\le q\in L^\infty (M)$ satisfying $\|q\|_{L^\infty (M)}\le \aleph$. Denote by $0<\lambda_1\le \lambda_2\ldots \le \lambda_k\ldots $ the sequence of eigenvalues of the operator $-\Delta +q$ with domain $H_0^1(M)\cap H^2(M)$. Let $(\phi_k)$ an orthonormal basis of $L^2(M)$ consisting in eigenfunctions, each $\phi_k$ is an eigenvalue for $\lambda_k$. Note that according to the usual elliptic regularity we have $\phi_k\in C^\infty (M)$ for each $k$.

By the Weyl's asymptotic formula and the min-max principle there exists a constant $\varkappa >1$, depending on $\aleph$ but not in $q$, so that
\begin{equation}\label{w6}
\varkappa^{-1}k^{2/n}\le \lambda_k \le \varkappa k^{2/n}.
\end{equation}

Set, for $\tilde{q}\in L^\infty (M)$ satisfying $\|\tilde{q}\|_{L^\infty (M)}\le \aleph$, 
\[
u=u(q,\phi_k)=\cos (\lambda_kt)\phi_k\quad \mbox{and}\quad \tilde{u}=u(\tilde{q},\phi_k).
\]
Then $v=\tilde{u}-u$ is the solution of the IBVP, where $g_k(t)=\cos (\sqrt{\lambda_k}t)$,
\begin{equation}\label{w7}
\left\{
\begin{array}{lll}
 \partial _t^2 u - \Delta u + \tilde{q}u = (\tilde{q}-q)\phi_k g_k(t) \quad &\mbox{in}\;   M \times (0,\tau), 
 \\
u = 0 &\mbox{on}\;  \partial M \times (0,\tau), 
\\
u(\cdot ,0) = 0,\quad \partial_t u (\cdot ,0) = 0.
\end{array}
\right.
\end{equation}

We have $\|g_k'\|^2_{L^2(0,\tau )}\le \lambda_k\tau$. Hence
\begin{equation}\label{w8}
\|g_k'\|^2_{L^2(0,\tau )}\le \varkappa \tau k^{2/n}
\end{equation}
by \eqref{w6}.

In the rest of this proof $C$ and $c$ denote generic constant only depending of $M$, $\aleph$, $\Gamma$ and $\tau$. From \eqref{w4.0} we have
\begin{equation}\label{w9}
C\|(\tilde{q}-q)\phi_k\|_{L^2(M)}\le e^{\kappa \epsilon}e^{ck^{2/n}}\| \partial_\nu v\|_{H^1 ((0,\tau ),L^2(\Gamma ))}+\frac{1}{\epsilon}\|(\tilde{q}-q)\phi_k\|_{H_0^1 (M)}
\end{equation}
for any $\epsilon \ge \epsilon_0$.

On the other hand 
\begin{align*}
\|(\tilde{q}-q)\phi_k\|_{H_0^1 (M)}&\le \|\widetilde{q}-q\|_{W^{1,\infty}(M)}\|\phi_k\|_{H_0^1 (M)}
\\
&\le 2N\sqrt{\lambda_k}
\\
&\le ck^{1/n}\;\; \mbox{by \eqref{w6}}.
\end{align*}
This in \eqref{w9} gives
\[
C\|(\tilde{q}-q)\phi_k\|_{L^2(M)}\le e^{\kappa \epsilon}e^{ck^{2/n}}\| \partial_\nu v\|_{H^1 ((0,\tau ),L^2(\Gamma ))}+\frac{k^{1/n}}{\epsilon},\quad  \epsilon \ge \epsilon_0.
\]
But we have by Cauchy-Schwarz's inequality 
\[
(\tilde{q}-q,\phi_k)^2\le \mbox{Vol}(M)\|(\tilde{q}-q)\phi_k\|_{L^2(M)}.
\]
Whence
\[
C(\tilde{q}-q,\phi_k)^2\le e^{\kappa \epsilon}e^{ck^{2/n}}\| \partial_\nu v\|_{H^1 ((0,\tau ),L^2(\Gamma ))}+\frac{k^{\frac{1}{n}}}{\epsilon},\quad \epsilon \ge \epsilon_0.
\]
Also
\begin{align*}
\|\tilde{q}-q\|_{L^2(M)}^2 &=\sum_{k\le \ell }(\tilde{q}-q,\phi _k)^2+\sum_{k>\ell}(\tilde{q}-q,\phi _k)^2
\\
&\le \sum_{k\le \ell}(\tilde{q}-q,\phi _k)^2 +\frac{1}{\lambda _{\ell+1}}\sum_{k>\ell}\lambda _k(\tilde{q}-q,\phi _k)^2
\\
&\le \sum_{k\le \ell}(\tilde{q}-q,\phi _k)^2 +\frac{N^2}{(\ell+1)^{2/n}}.
\end{align*}
Thus
\begin{equation}\label{w10}
C\|\tilde{q}-q\|_{L^2(M)}^2\le \ell e^{\kappa \epsilon}e^{c\ell ^{2/n}}\| \partial_\nu v\|_{H^1 ((0,\tau ),L^2(\Gamma ))}+\frac{1}{(\ell+1)^{2/n}}+\frac{\ell ^{1+1/n}}{\epsilon}.
\end{equation}
Let $s\geq 1$ be a real number and let $\ell $ be the unique integer so that $\ell \leq s<\ell+1$. Then \eqref{w10} with that $\ell$ yields
\begin{equation}\label{w11}
C\|\tilde{q}-q\|_{L^2(M)}^2\le s e^{\kappa \epsilon}e^{cs^{2/n}}\| \partial_\nu v\|_{H^1 ((0,\tau ),L^2(\Gamma ))}+\frac{1}{s^{2/n}}+\frac{s^{1+1/n}}{\epsilon}.
\end{equation}
We then get by taking $\epsilon =s^{3/n+1}$ in \eqref{w11}, where $s_0=\max \left( 1, \epsilon_0^{n/(n+3)}\right)$,
\[
C\|\tilde{q}-q\|_{L^2(M)}^2 \le \frac{1}{s^{2/n}}+e^{c s^{2/n+1}}e^{\kappa s^{3/N+1}}\| \partial_\nu v\|_{H^1 ((0,\tau ),L^2(\Gamma ))},\quad s\ge s_0.
\]
Therefore
\[
C\|\tilde{q}-q\|_{L^2(M)}^2 \le \frac{1}{s^{2/n}}+e^{cs^{3/n+1}}\| \partial_\nu v\|_{H^1 ((0,\tau ),L^2(\Gamma ))},\quad s\ge s_0
\]
or equivalently
\[
C\|\tilde{q}-q\|_{L^2(M)}\le \frac{1}{s^{1/n}}+e^{cs^{3/n+1}}\| \partial_\nu v\|_{H^1 ((0,\tau ),L^2(\Gamma ))},\quad s\ge s_0.
\]
We end up getting the expected inequality by minimizing with respect to $s$.
\end{proof}

\subsection{Inverse source problem for the heat equation}

Consider the following IBVP for the heat equation
\begin{equation}\label{w12}
\left\{
\begin{array}{lll}
 \partial _t u - \Delta u + q(x)u  = g(t)f(x) \quad &\mbox{in}\;   M\times (0,\tau ),
 \\
u = 0 &\mbox{on}\;  \partial M\times (0,\tau ),
\\
u(\cdot ,0) = 0
\end{array}
\right.
\end{equation}
and set $Q = M\times (0,\tau )$.

We recall that the anisotropic Sobolev space $H^{2,1}(Q)$ is given as follows
\[
H^{2,1}(Q)=L^2((0,\tau ),H^2(M))\cap H^1((0,\tau ), L^2(M)).
\]

From classical parabolic regularity theorems for any $f\in L^2(M)$, $g\in L^2(0,\tau )$ and $q\in L^\infty (M)$ the IBVP \eqref{w12} has a unique solution 
\[
u=u(q,f,g)\in H^{2,1}(Q).
\] 
Furthermore if $\aleph >0$ then there exists a constant $C>0$ so that
\begin{equation}\label{w13}
\|u\|_{H^{2,1}(Q)}\le C\|g\|_{L^2(0,\tau )}\|f\|_{L^2(M)}
\end{equation}
for any $q\in L^\infty (M)$ satisfying $\|q\|_{L^\infty (M)}\le \aleph$.

If in addition $g\in H^1(0,\tau )$ then it is not hard to check that $\partial _tu$ is the solution of the IBVP \eqref{w12} with $g$ substituted by $g'$. Hence $\partial _t u\in H^{2,1}(Q)$ and
\begin{equation}\label{w14}
\|\partial _tu\|_{H^{2,1}(Q)}\le C\|g'\|_{L^2(0,\tau )}\|f\|_{L^2(M )}
\end{equation}
for any $q\in L^\infty (M)$ satisfying $\|q\|_{L^\infty (M)}\le \aleph$, where $C$ is the constant  in \eqref{w13}.

We derive that $\partial _\nu u$ is well defined as an element of $H^1((0,\tau ),L^2(\Gamma))$. Therefore by \eqref{w13}, \eqref{w14} and the continuity of the trace operator on $\Gamma$ we have
\[
\|\partial _\nu u\|_{H^1((0,\tau ),L^2(\Gamma ))}\le C\|g\|_{H^1(0,\tau )}\|f\|_{L^2(M )},
\]
where the constant $C$ is as in \eqref{w13}.

The  following result will be useful in the sequel.

\begin{proposition}\label{proposition-w1}
Let $\aleph >0$. There exist two constants $c>0$ and  $C>0$ so that for any $q\in L^\infty (M)$ satisfying $\|q\|_{L^\infty (M)}\le \aleph$, $f\in H_0^1(M)$ and $g\in H^1(0,\tau )$ with $g(0)\ne 0$ we have
\begin{equation}\label{w15}
C\|f\|_{L^2(M)}\leq \frac{1}{\sqrt{\epsilon}}\|f\|_{H_0^1(M)}+\frac{1}{|g(0)|}e^{\tau\|g'\|_{L^2(0,\tau)}^2/|g(0)|^2}e^{c\epsilon}\|\partial _\nu u\|_{H^1((0,\tau ),L^2(\Gamma ))}
\end{equation}
for any $\epsilon \ge 1$, where $u=u(q,f,g)$ is the solution of the IBVP \eqref{w12}.
\end{proposition}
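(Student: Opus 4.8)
The strategy mirrors the treatment of the wave source problem in \eqref{w1.0+}--\eqref{w2.0}, the only new feature being that the heat equation is merely final time observable, so that the frequency content of $f$ must be split. \emph{First}, by Duhamel's formula the solution of \eqref{w12} is $u(q,f,g)(x,t)=\int_0^t g(t-s)w(x,s)\,ds$, where $w=w(q,f)=e^{\cdot A_q}f$, $A_q=\Delta-q$, is the solution of the homogeneous heat IBVP with initial datum $f$. Hence $\partial_\nu u=S(\partial_\nu w)$ with $(Sh)(t)=\int_0^t g(t-s)h(s)\,ds$, and the Volterra inversion estimate established inside the proof of Theorem \ref{theorem-isp1} (differentiate the convolution equation to obtain a Volterra equation of the second kind, then apply Gronwall's lemma) yields
\[
\|\partial_\nu w\|_{L^2(\Gamma\times(0,\tau))}\le \frac{\sqrt2}{|g(0)|}\,e^{\tau\frac{\|g'\|_{L^2(0,\tau)}^2}{|g(0)|^2}}\,\|\partial_\nu u\|_{H^1((0,\tau),L^2(\Gamma))}.
\]
\emph{Second}, since $\|q\|_{L^2(M)}\le N\,\mathrm{Vol}(M)^{1/2}$, Theorem \ref{theorem-O3} provides a constant $C>0$, depending only on $N$, $\tau$ and $\Gamma$, with $\|w(\cdot,\tau)\|_{L^2(M)}\le C\|\partial_\nu w\|_{L^2(\Gamma\times(0,\tau))}$; combined with the previous display this bounds $\|w(\cdot,\tau)\|_{L^2(M)}$ by the weighted norm of $\partial_\nu u$ appearing in \eqref{w15}.

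\emph{Third}, one passes from $w(\cdot,\tau)$ back to $f$ by a low/high frequency splitting. Let $(\lambda_k,\phi_k)_{k\ge1}$ be the Dirichlet eigenpairs of $-\Delta+q$ on $M$, normalized in $L^2(M)$; since $\|q\|_\infty\le N$ one has $\lambda_k\ge -N$ for all $k$. Writing $f=\sum_k f_k\phi_k$ gives $w(\cdot,\tau)=\sum_k e^{-\lambda_k\tau}f_k\phi_k$. Fix $\mu\ge1$ and split according to $\lambda_k\le\mu$ or $\lambda_k>\mu$. On the low modes, $f_k^2=e^{2\lambda_k\tau}e^{-2\lambda_k\tau}f_k^2\le e^{2\mu\tau}e^{-2\lambda_k\tau}f_k^2$, whence $\sum_{\lambda_k\le\mu}f_k^2\le e^{2\mu\tau}\|w(\cdot,\tau)\|_{L^2(M)}^2$. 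On the high modes, $f_k^2<\mu^{-1}\lambda_kf_k^2$, so, using $\sum_k\lambda_kf_k^2=\|\nabla f\|_{L^2(M)}^2+\int_Mqf^2$, the bound $\lambda_k\ge -N$ and the Poincar\'e inequality,
\[
\sum_{\lambda_k>\mu}f_k^2\le\frac1\mu\sum_{\lambda_k>0}\lambda_kf_k^2\le\frac1\mu\Big(\|\nabla f\|_{L^2(M)}^2+2N\|f\|_{L^2(M)}^2\Big)\le\frac{C_N}{\mu}\|f\|_{H_0^1(M)}^2.
\]
Adding the two contributions,
\[
\|f\|_{L^2(M)}^2\le e^{2\mu\tau}\|w(\cdot,\tau)\|_{L^2(M)}^2+\frac{C_N}{\mu}\|f\|_{H_0^1(M)}^2.
\]

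Finally I would assemble the three steps: take square roots (using $\sqrt{a+b}\le\sqrt a+\sqrt b$), insert the bound on $\|w(\cdot,\tau)\|_{L^2(M)}$ obtained in the first two steps, and set $\mu=\epsilon\ge1$. This gives \eqref{w15} after relabelling the constants (dividing through by $C_N^{1/2}$ and enlarging the coefficient in the exponential to some $c\ge\tau$ so as to absorb the remaining multiplicative constants, which is legitimate since $\epsilon\ge1$). The only genuine difficulty is conceptual and lies in the third step: final time observability controls $w(\cdot,\tau)$, not $f$, and the frequency cutoff is exactly what converts this loss into the unavoidable residual $\epsilon^{-1/2}\|f\|_{H_0^1(M)}$; one must also make sure the observability constant supplied by Theorem \ref{theorem-O3} is uniform over $\|q\|_{L^\infty(M)}\le N$, which it is.
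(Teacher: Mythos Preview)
Your proof is correct and follows the same three-step architecture as the paper (Duhamel/Volterra inversion, final time observability, spectral cutoff). The only substantive difference lies in the third step: the paper truncates the eigenfunction expansion at an \emph{index} $k$, bounds the low part by $k e^{2\lambda_k\tau}\|w(\cdot,\tau)\|^2$, and then invokes the two-sided Weyl estimate $\beta^{-1}k^{2/n}\le\lambda_k\le\beta k^{2/n}$ (uniform over $\|q\|_\infty\le N$) to convert the cutoff index into the parameter $\epsilon$. You instead truncate at an eigenvalue \emph{level} $\mu$ and set $\mu=\epsilon$ directly, which bypasses Weyl's formula altogether and makes the argument shorter and more elementary; it also spares you the paper's preliminary reduction to $q\ge 0$ (you absorb possible negative eigenvalues via $\lambda_k\ge -N$ and Poincar\'e). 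The paper's route buys nothing extra here, so your variant is a genuine simplification.
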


\begin{proof}
Pick $q\in L^\infty (M)$ satisfying $\|q\|_{L^\infty (M)}\le \aleph$, $f\in H_0^1(M)$ and $g\in H^1(0,\tau )$ with $g(0)\ne 0$. We may assume without loss of generality that $q\ge 0$. This is achieved by substituting $u$ by $ue^{-\aleph t}$, which is the solution of the IBVP \eqref{w12} when $q$ is replaced by $q+\aleph$.

Let $v=v(q,f)\in H^{2,1}(Q)$ be the unique solution of the IBVP
\begin{equation*}
\left\{
\begin{array}{lll}
 \partial _t v - \Delta v + q(x)v  = 0 \;\; &\mbox{in}\;   M\times (0,\tau), 
 \\
v = 0 &\mbox{on}\;  \partial M\times (0,\tau ),
\\
v(\cdot ,0) = f.
\end{array}
\right.
\end{equation*}
Then $\partial _\nu v$ is well defined as an element of $L^2(\Gamma \times (0,\tau ))$. As for the wave equation we have
\[
\partial _\nu u|_{\Gamma }(\cdot ,t)=\int_0^t g(t-s)\partial _\nu v|_{\Gamma}(\cdot ,s)ds.
\]
Therefore
\begin{equation}\label{w16}
\|\partial _\nu v\|_{L^2(\Gamma \times (0,\tau ) )}\le \frac{\sqrt{2}}{|g(0)|}e^{\tau\|g'\|_{L^2((0,\tau))}^2/|g(0)|^2}\|\partial _\nu u\|_{H^1((0,\tau ),L^2(\Gamma ))}.
\end{equation}

From the final time observability inequality in Theorem \ref{theorem-O3} we have
\begin{equation}\label{w17}
\|v(\cdot ,\tau )\|_{L^2(M )}\le K\|\partial _\nu v\|_{L^2(\Gamma \times (0,\tau ) )},
\end{equation}
for some constant $K>0$ independent of $q$ and $f$.

A combination of \eqref{w16} and \eqref{w17} yields
\begin{equation}\label{w18}
C\|v(\cdot ,\tau )\|_{L^2(M )}\le \frac{1}{|g(0)|}e^{\tau\|g'\|_{L^2((0,\tau))}^2/|g(0)|^2}\|\partial _\nu u\|_{H^1((0,\tau ),L^2(\Gamma ))}.
\end{equation}

Denote by $0 <\lambda _1\le \lambda _2 \le  \ldots \le \lambda _k \rightarrow \infty$ the sequence of eigenvalues of  the $-\Delta +q$ with domain $H_0^1(M)\cap H^2(M )$. Let $(\phi _k)$ be a sequence of eigenfunctions, each $\phi_k$ is associated to $\lambda _k$, so that $(\phi _k)$ form an orthonormal basis of $L^2(M)$. 

We have
\[
v(\cdot ,\tau )=\sum_{\ell \geq 1}e^{-\lambda _k\tau}(f,\phi_\ell )\phi_\ell ,
\]
where $(\cdot ,\cdot )$ is the usual scalar product on $L^2(M)$. Hence
\[
(f,\phi_\ell )^2\le e^{2\lambda _\ell \tau} \|v(\cdot ,\tau )\|_{L^2(M)}^2,\quad \ell \ge 1.
\]
Whence
\[
\sum_{\ell =1}^k(f,\phi_\ell )^2 \le ke^{2\lambda _k\tau}\|v(\cdot ,\tau )\|_{L^2(M)}^2
\]
for any integer $k\geq 1$.

This and the fact that $\left(\sum_{\ell\ge 1}\lambda _\ell (\cdot ,\phi_\ell )_{L^2(\Omega )}^2\right)^{1/2}$ is an equivalent norm on $H_0^1(M)$ lead
\begin{align*}
\|f\|_{L^2(M)}^2 &= \sum_{\ell =1}^k(f,\phi_\ell )^2+\sum_{\ell \ge k+1}(f,\phi_\ell )^2
\\
&\le \sum_{\ell =1}^k(f,\phi_\ell )^2+\frac{1}{\lambda_{k+1}}\sum_{\ell \ge k+1}\lambda_\ell (f,\phi_\ell )^2
\\
& \le ke^{2\lambda _k\tau}\|v(\cdot ,\tau )\|_{L^2(M)}^2+ \frac{1}{\lambda_{k+1}}\|f\|_{H_0^1(M )}^2.
\end{align*}

Until the end of this proof $C$ and $c$ denote generic constants independent of $q$, $f$ and $g$.
 
We get from inequality \eqref{w6} 
\begin{equation}\label{w19}
C\|f\|_{L^2(M)}^2\le ke^{ck^{2/n}}\|v(\cdot ,\tau )\|_{L^2(M)}^2+ \frac{1}{(k+1)^{2/n}}\|f\|_{H_0^1(M)}^2.
\end{equation}

Let $\epsilon \ge 1$ and $k\ge 1$ be the unique integer so that $k\le \epsilon^{n/2}<k+1$. We obtain in a straightforward manner from \eqref{w19}
\begin{equation}\label{w20}
C\|f\|_{L^2(M)}^2\leq e^{c\epsilon}\|v(\cdot ,\tau )\|_{L^2(M)}^2+ \frac{1}{\epsilon}\|f\|_{H^1(M)}^2.
\end{equation}
Then \eqref{w18} in \eqref{w20} gives the expected inequality.
\end{proof}

Minimizing the right hand side of \eqref{w15} with respect to $\epsilon$ we obtain the following result in which $\Phi (\rho )=\left| \ln \rho \right|^{-1/2}+\rho$ for $\rho >0$ and $\Phi (0)=0$

\begin{corollary}\label{corollary-w1}
Let $\aleph >0$, $q\in L^\infty (M)$ and $g\in H^1(0,\tau )$ satisfying $g(0)\ne 0$. There exists a constant $C>0$, depending of $\aleph$, $q$ and $g$, so that for any $f\in H_0^1(\Omega )$ with $\|f\|_{H_0^1(M)}\le \aleph$ we have
\[
C\|f\|_{L^2(M )}\le \Phi \left(\|\partial _\nu u \|_{H^1((0,\tau ) ,L^2(\Gamma))}\right),
\]
where $u=u(q,f,g)$ is the solution of the IBVP \eqref{w12}.
\end{corollary}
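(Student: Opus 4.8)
The plan is to optimize the two-term estimate \eqref{w15} in the free parameter $\epsilon\ge 1$, exactly as was done for the wave equation in Theorem \ref{theorem-w2.0}. First I would fix $q$ and $g$ and absorb all the constants that depend only on $N$, $q$, $g$ (together with $M$, $\Gamma$, $\tau$) into a single generic constant; in particular the factor $\frac{1}{|g(0)|}e^{\tau\|g'\|_{L^2(0,\tau)}^2/|g(0)|^2}$ becomes a harmless constant since $g$ is now frozen. Writing $\rho=\|\partial_\nu u\|_{H^1((0,\tau),L^2(\Gamma))}$ and using the hypothesis $\|f\|_{H_0^1(M)}\le N$, inequality \eqref{w15} reduces to
\[
C\|f\|_{L^2(M)}\le \frac{C'}{\sqrt{\epsilon}}+e^{c\epsilon}\rho,\qquad \epsilon\ge 1,
\]
for suitable constants $C,C',c>0$ depending only on the allowed data.

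Next I would carry out the standard minimization. If $\rho$ is bounded below by a fixed positive constant the estimate is trivial (the left side is bounded by $N$, up to $\|\cdot\|_{L^2}\le C\|\cdot\|_{H^1_0}$, so one simply enlarges $C$), so assume $\rho$ small. The natural choice is to balance the two terms by taking $e^{c\epsilon}\rho$ comparable to $\epsilon^{-1/2}$; concretely one picks $\epsilon=\frac{1}{2c}|\ln\rho|$, which is $\ge 1$ once $\rho$ is small enough. With that choice $e^{c\epsilon}\rho=\rho^{1/2}$ and $\epsilon^{-1/2}=(2c)^{1/2}|\ln\rho|^{-1/2}$, so the right-hand side is bounded by a constant times $|\ln\rho|^{-1/2}+\rho^{1/2}$. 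Since $\rho^{1/2}\le |\ln\rho|^{-1/2}+\rho$ for $\rho$ small (indeed $\rho^{1/2}$ is dominated by $|\ln\rho|^{-1/2}$ as $\rho\to 0$), this is $\le C\,\Phi(\rho)$ with $\Phi(\rho)=|\ln\rho|^{-1/2}+\rho$. Finally I would note that $\Phi$ extends continuously to $\rho=0$ with $\Phi(0)=0$ and is monotone near $0$, so after enlarging $C$ once more to cover the range of non-small $\rho$ we obtain
\[
C\|f\|_{L^2(M)}\le \Phi\!\left(\|\partial_\nu u\|_{H^1((0,\tau),L^2(\Gamma))}\right)
\]
for all admissible $f$, which is the claim.

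There is really no serious obstacle here: the analytic content is entirely in Proposition \ref{proposition-w1} (hence ultimately in the final-time observability inequality of Theorem \ref{theorem-O3} and the Volterra inversion of Theorem \ref{theorem-isp1}), and the corollary is a soft consequence obtained by choosing $\epsilon$. The only points requiring a little care are bookkeeping of which constants are allowed to depend on $q$ and $g$ (they all are, which is why $g(0)\ne 0$ suffices and no quantitative lower bound on $|g(0)|$ enters the final constant), checking that the chosen $\epsilon=\frac{1}{2c}|\ln\rho|$ indeed satisfies the constraint $\epsilon\ge 1$ for $\rho$ below an explicit threshold, and disposing of the complementary range $\rho$ not small by the trivial bound $\|f\|_{L^2(M)}\le C\|f\|_{H_0^1(M)}\le CN$. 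The logarithmic rate $|\ln\rho|^{-1/2}$ (rather than a Hölder rate) is exactly the footprint of the weak, final-time-only observability of the heat equation, in contrast with the Hölder estimates available under the geometric control condition for the wave equation.
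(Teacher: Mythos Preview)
Your proposal is correct and follows exactly the approach indicated in the paper: the paper derives the corollary by fixing $q$ and $g$, absorbing the $g$-dependent constants, and then minimizing \eqref{w15} with respect to $\epsilon$, which is precisely the optimization you carry out in detail. Your treatment of the two regimes (small $\rho$ via the choice $\epsilon=\frac{1}{2c}|\ln\rho|$, and bounded-below $\rho$ via the trivial $L^2$--$H_0^1$ bound) fills in the routine details the paper omits.
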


\subsection{Determining the zeroth order coefficient in a heat equation}

Consider the IBVP
\begin{equation}\label{w21}
\left\{
\begin{array}{lll}
 \partial _t u - \Delta u + q(x)u  = 0 \quad &\mbox{in}\;   M\times (0,\tau ), 
 \\
u = 0 &\mbox{on}\;  \partial M\times (0,\tau ), 
\\
u(\cdot ,0) = u_0.
\end{array}
\right.
\end{equation}
Again, with reference to classical regularity theorems we have that, for $q\in L^\infty (M)$ and $u_0\in H_0^1(M)$, the IBVP\eqref{w21}  has unique solution $u=u(q,u_0)\in H^{2,1}(Q)$. Furthermore for any $\aleph>0$ there exists a constant $C>0$ so that
\begin{equation}\label{w22}
\|u(q,u_0)\|_{H^{2,1}(M\times (0,\tau ))}\le C \|u_0\|_{H_0^1(M )}
\end{equation}
for any $q\in L^\infty (M)$ satisfying $\|q\|_{L^\infty (M)}\le \aleph$.

Define 
\[
\mathcal{H}_0(M)=\{w\in H_0^1(M);\; \Delta w\in H_0^1(M)\}
\] 
that we equip with its natural norm
\[
\|w\|_{\mathcal{H}_0(M)}=\| w\|_{H_0^1(M)}+\|\Delta w\|_{H_0^1(M)}.
\]

If $q\in W^{1,\infty}(M)$ and $u_0\in \mathcal{H}_0 (M)$ then it is straightforward to check that 
\[
\partial _tu(q,u_0)=u(q,\Delta u_0-qu_0).
\]
We get by applying \eqref{w22} with $u_0$ substituted by $\Delta u_0-qu_0$
\begin{equation}\label{w23}
\|\partial _tu\|_{H^{2,1}(M\times (0,\tau))}\le C \|u_0\|_{\mathcal{H}_0(M)}
\end{equation}
for any $q\in W^{1,\infty} (M)$ satisfying $\|q\|_{W^{1,\infty}(M)}\le \aleph$, where the constant $C$ is independent of $q$.

Bearing in mind that the trace operator 
\[
w\in H^{2,1}(Q)\mapsto \partial _\nu w\in L^2(\Gamma \times (0,\tau ) )
\]
is bounded we obtain that $\partial _\nu u \in H^1((0,\tau ),L^2(\Gamma ))$ provided that $u_0\in \mathcal{H}_0(M)$ and $q\in W^{1,\infty}(M )$. Further we get from \eqref{w22} and \eqref{w23}
\[
\|\partial _\nu u\|_{H^1((0,\tau ),L^2(\Gamma ))}\le C \|u_0\|_{\mathcal{H}_0(M)}
\]
for any $q\in W^{1,\infty} (M)$ satisfying $\|q\|_{W^{1,\infty} (M)}\le \aleph$, where the constant $C$ is independent of $q$.

That is we proved that the operator 
\[
\mathcal{N}(q):u_0\in \mathcal{H}_0(M)\mapsto \partial _\nu u\in H^1((0,\tau ),L^2(\Gamma ))
\] 
is bounded and
\[
\|\mathcal{N}(q)\|_{\mathscr{B}(\mathcal{H}_0(M),H^1((0,\tau ),L^2(\Gamma )))}\le C 
\]
for any $q\in W^{1,\infty} (M)$ satisfying $\|q\|_{W^{1,\infty} (M)}\le N$, where the constant $C$ is independent of $q$.

Henceforward for convenience $\|\mathcal{N}(\tilde{q})-\mathcal{N}(q)\|_{\mathscr{B}(\mathcal{H}_0(M),H^1((0,\tau ),L^2(\Gamma )))}$  is simply denoted by $\|\mathcal{N}(\tilde{q})-\mathcal{N}(q)\|$.

\begin{theorem}\label{theorem-w4}
Let $\aleph>0$. There exists a constant $C>0$ so that for any $q,\, \tilde{q} \in W^{1,\infty}(M)$ satisfying 
\[
\|q\|_{W^{1,\infty} (M)}\le \aleph,\quad \|\tilde{q}\|_{W^{1,\infty} (M)}\le \aleph,
\]
we have 
\[
C\|\widetilde{q}-q\|_{L^2(M )}\le \Theta \left(\|\mathcal{N}(\tilde{q})-\mathcal{N}(q)\|\right).
\]
Here $\Theta (\rho )=\left| \ln \rho \right|^{-1/(1+4n)}+\rho $ for $\rho >0$ and $\Theta (0)=0$.
\end{theorem}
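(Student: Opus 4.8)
The plan is to transpose the argument of Theorem~\ref{theorem-w3} to the parabolic setting, with Proposition~\ref{proposition-w1} playing the role of the wave inverse source estimate \eqref{w4.0}. First I would reduce to $q\ge 0$ and $\widetilde q\ge 0$: replacing $q$ by $q+N$ turns $u(q,u_0)$ into $e^{-Nt}u(q,u_0)$, so that $\mathcal N(q+N)u_0=e^{-Nt}\mathcal N(q)u_0$; since multiplication by the fixed function $e^{-Nt}$ is an isomorphism of $H^1((0,\tau),L^2(\Gamma))$, the quantity $\|\mathcal N(\widetilde q+N)-\mathcal N(q+N)\|$ is comparable to $\|\mathcal N(\widetilde q)-\mathcal N(q)\|$ while $(\widetilde q+N)-(q+N)=\widetilde q-q$, and it suffices to treat non-negative coefficients. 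Let then $(\lambda_k)$ and $(\phi_k)$ be the eigenvalues and an $L^2(M)$-orthonormal eigenbasis of $-\Delta+q$ with domain $H_0^1(M)\cap H^2(M)$. Bootstrapping elliptic regularity from $q\in W^{1,\infty}(M)$ gives $\phi_k\in W^{3,p}(M)$ for all $p$, hence $\Delta\phi_k=(q-\lambda_k)\phi_k\in H_0^1(M)$ and $\phi_k\in\mathcal H_0(M)$ with $\|\phi_k\|_{\mathcal H_0(M)}\le c\lambda_k^{3/2}$; Weyl's law and the min--max principle yield \eqref{w6}, namely $\beta^{-1}k^{2/n}\le\lambda_k\le\beta k^{2/n}$ for some $\beta=\beta(N)>1$.

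Next, for fixed $k$, I would take $u_0=\phi_k$; then $u(q,\phi_k)=e^{-\lambda_k t}\phi_k$, so with $\widetilde u=u(\widetilde q,\phi_k)$ the difference $v_k=\widetilde u-u$ solves the IBVP \eqref{w12} for the coefficient $\widetilde q$, source $f=(q-\widetilde q)\phi_k\in H_0^1(M)$, and $g=g_k(t)=e^{-\lambda_k t}$, for which $g_k(0)=1$ and $\|g_k'\|_{L^2(0,\tau)}^2\le\lambda_k/2\le ck^{2/n}$. Applying Proposition~\ref{proposition-w1} to $v_k$ gives, for $\epsilon\ge 1$,
\[
C\|(q-\widetilde q)\phi_k\|_{L^2(M)}\le\frac{1}{\sqrt\epsilon}\,\|(q-\widetilde q)\phi_k\|_{H_0^1(M)}+e^{ck^{2/n}}e^{c\epsilon}\,\|\partial_\nu v_k\|_{H^1((0,\tau),L^2(\Gamma))}.
\]
Now $\|(q-\widetilde q)\phi_k\|_{H_0^1(M)}\le c\|q-\widetilde q\|_{W^{1,\infty}(M)}\|\phi_k\|_{H_0^1(M)}\le ck^{1/n}$, and since $\partial_\nu v_k=(\mathcal N(\widetilde q)-\mathcal N(q))\phi_k$ we get $\|\partial_\nu v_k\|_{H^1((0,\tau),L^2(\Gamma))}\le\|\mathcal N(\widetilde q)-\mathcal N(q)\|\,\|\phi_k\|_{\mathcal H_0(M)}\le ck^{3/n}\|\mathcal N(\widetilde q)-\mathcal N(q)\|$. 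Combining this with the elementary inequality $(q-\widetilde q,\phi_k)^2\le\mathrm{Vol}(M)\,\|(q-\widetilde q)\phi_k\|_{L^2(M)}^2\le c\,\|(q-\widetilde q)\phi_k\|_{L^2(M)}$ (the last step because $\|(q-\widetilde q)\phi_k\|_{L^2(M)}\le 2N$) produces
\[
C(q-\widetilde q,\phi_k)^2\le\frac{k^{1/n}}{\sqrt\epsilon}+e^{ck^{2/n}}e^{c\epsilon}k^{3/n}\,\|\mathcal N(\widetilde q)-\mathcal N(q)\|,\qquad\epsilon\ge 1.
\]

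Finally I would sum over $k$. For an integer $\ell\ge 1$,
\[
\|q-\widetilde q\|_{L^2(M)}^2=\sum_{k\le\ell}(q-\widetilde q,\phi_k)^2+\sum_{k>\ell}(q-\widetilde q,\phi_k)^2\le\ell\max_{k\le\ell}(q-\widetilde q,\phi_k)^2+\frac{c}{(\ell+1)^{2/n}},
\]
the tail being controlled through \eqref{w6} and the regularity of $q-\widetilde q$ (see below). Inserting the bound on $(q-\widetilde q,\phi_\ell)^2$, introducing a real $s\ge s_0$ with $\ell=\lfloor s\rfloor$, choosing $\epsilon$ as a suitable power of $s$ so that the $s^{1+1/n}/\sqrt\epsilon$ term is absorbed by the $s^{-2/n}$ term, and absorbing all polynomial factors into the exponential, one reaches an inequality
\[
C\|q-\widetilde q\|_{L^2(M)}^2\le cs^{-2/n}+e^{cs^a}\,\|\mathcal N(\widetilde q)-\mathcal N(q)\|,\qquad s\ge s_0,
\]
for an explicit $a=a(n)>0$; minimizing over $s$ then yields the logarithmic modulus $\Theta$, the exponent $1/(1+4n)$ coming out of this optimization once all the powers of $\lambda_k\sim k^{2/n}$ — from $g_k$, from $\|(q-\widetilde q)\phi_k\|_{H_0^1}$, from $\|\phi_k\|_{\mathcal H_0(M)}$, and from the spectral truncation in Proposition~\ref{proposition-w1} — are tracked, exactly as in the passage from \eqref{w9}--\eqref{w11} to the end of the proof of Theorem~\ref{theorem-w3}.

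The step I expect to be the main obstacle is the tail estimate $\sum_{k>\ell}(q-\widetilde q,\phi_k)^2\le c(\ell+1)^{-2/n}$. The clean bound $\sum_{k\ge1}\lambda_k(q-\widetilde q,\phi_k)^2\le c$ is precisely what one obtains when $q-\widetilde q$ lies in the form domain $H_0^1(M)$ of $-\Delta+q$, whereas here $q-\widetilde q$ is only in $W^{1,\infty}(M)$ and need not vanish on $\partial M$; making this rigorous requires using the extra $W^{1,\infty}$ regularity, e.g.\ by writing $q-\widetilde q$ as a sum of an element of $H_0^1(M)$ and a remainder supported near $\partial M$ and estimating the latter through $\|\partial_\nu\phi_k\|_{L^2(\partial M)}^2\lesssim\lambda_k$ (or through an $H^s$, $s<1/2$, bound for the spectral tail). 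The trade-off between the resulting tail decay and the exponential factor $e^{c\epsilon}$ is what pins down the exponent of $\Theta$. Everything else — the reduction to $q\ge 0$, the explicit form of $g_k$, and the bookkeeping of $\lambda_k$-powers — is routine.
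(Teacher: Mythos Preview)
Your outline is essentially the paper's own argument: the paper also reduces to $q\ge 0$, takes $u(q,\phi_k)=e^{-\lambda_kt}\phi_k$, applies Proposition~\ref{proposition-w1} to $v=u(\widetilde q,\phi_k)-u(q,\phi_k)$, bounds $\|\partial_\nu v\|_{H^1((0,\tau),L^2(\Gamma))}$ by a power of $\lambda_k$ times $\|\mathcal N(\widetilde q)-\mathcal N(q)\|$, and then sums and optimizes exactly as in Theorem~\ref{theorem-w3}. The bookkeeping differences---the paper uses the cruder $e^{\tau\lambda_k^2}$ and $\|\phi_k\|_{\mathcal H_0(M)}\lesssim\lambda_k$ where you track $e^{ck^{2/n}}$ and $\lambda_k^{3/2}$---are immaterial, since all such polynomial factors are absorbed into the exponential before the final minimization in $s$ and $\epsilon$.

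The obstacle you isolate is genuine, and the paper does \emph{not} address it: after \eqref{w25} it simply writes ``Similarly to the proof of Theorem~\ref{theorem-w3}'' and uses the tail bound
\[
\sum_{k>\ell}(\widetilde q-q,\phi_k)^2\le \frac{1}{\lambda_{\ell+1}}\sum_{k>\ell}\lambda_k(\widetilde q-q,\phi_k)^2\le \frac{C}{(\ell+1)^{2/n}},
\]
which amounts to $\widetilde q-q\in H_0^1(M)$ and is not guaranteed by the stated hypotheses (no condition $q=\widetilde q$ on $\partial M$ is assumed). Your $H^s$, $s<\tfrac12$, fix---using that $D((-\Delta+q)^{s/2})=H^s(M)$ for $s<\tfrac12$, so that $\sum_k\lambda_k^s(\widetilde q-q,\phi_k)^2\le C\|\widetilde q-q\|_{H^s(M)}^2<\infty$ without any boundary compatibility---is the natural way to make the argument rigorous; it preserves the structure of the proof at the price of a possibly weaker exponent in $\Theta$. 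The exponent $1/(1+4n)$ as written in the paper is what falls out if one takes the $H_0^1$ tail estimate for granted.
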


\begin{proof} 
Let $q,\, \tilde{q} \in W^{1,\infty}(M)$ satisfying 
\[
\|q\|_{W^{1,\infty} (M)}\le \aleph,\quad \|\tilde{q}\|_{W^{1,\infty} (M)}\le \aleph.
\] 
As in the preceding subsection we may assume without loss of generality that $q\ge 0$. 

Denote by $0 <\lambda _1\le \lambda _2 \le  \ldots \le \lambda _k \rightarrow \infty$ the sequence of eigenvalues of  the operator $-\Delta +q$ with domain $H_0^1(M)\cap H^2(M)$. Let $(\phi _k)$ a sequence of the corresponding eigenfunctions so that $(\phi _k)$ form an orthonormal basis of $L^2(M)$. 

Taking into account that $u(q,\phi _k)=e^{-\lambda _kt}\phi _k$ we obtain that 
\[
v=u(\tilde{q},\phi_k)-u(q,\phi_k)
\] is the solution of the IBVP
\begin{equation*}
\left\{
\begin{array}{lll}
 \partial _t v - \Delta v + q(x)v  = (\tilde{q}-q)\phi _ke^{-\lambda_kt} \quad &\mbox{in}\;   M\times (0,\tau ),
 \\
uv= 0 &\mbox{on}\;  \partial M\times (0,\tau ),
\\
v(\cdot ,0) = 0.
\end{array}
\right.
\end{equation*}

Therefore
\[
\mathcal{N}(\tilde{q})(\phi_k)-\mathcal{N}(q)(\phi_k)=\partial _\nu v
\]
from which we deduce
\[
\| \partial _\nu v\|_{H^1((0,\tau ),L^2(\Gamma ))}\le C\lambda_k  \|\mathcal{N}(\widetilde{q})-\mathcal{N}(q)\|.
\]
Here and henceforth $C$ and $c$ denote generic constants independent of $q$ and $\tilde{q}$.

As in the preceding subsection we get from \eqref{w15}
\begin{equation}\label{w24}
C|(\tilde{q}-q,\phi_k)|\le\frac{\sqrt{\lambda_k}}{\sqrt{\epsilon}}+e^{\tau \lambda _k^2}e^{c\epsilon}\lambda _k^2 \|\mathcal{N}(\tilde{q})-\mathcal{N}(q)\|
\end{equation}
for any $\epsilon \ge 1$, where we used the inequality $\|(\tilde{q}-q)\phi_k\|_{H_0^1(M )}\leq C\sqrt{\lambda _k}$.

Inequality \eqref{w24} then gives
\begin{equation}\label{w25}
C\sum_{k=1}^\ell |(\tilde{q}-q,\phi_k)_{L^2(M )}^2\le \frac{\ell\lambda _\ell}{\epsilon}+ \ell e^{c\lambda _\ell^2}e^{c\epsilon}\|\mathcal{N}(\widetilde{q})-\mathcal{N}(q)\|^2
\end{equation}
for any arbitrary integer $\ell \ge 1$.

Similarly to the proof of Theorem \ref{theorem-w3} inequality \eqref{w25} yields
\[
C\|\tilde{q}-q\|_{L^2(M)}^2\le \frac{s^{1+2/n}}{\epsilon}+\frac{1}{s^{2/n}}+e^{c s^{1+4/n}}e^{c\epsilon}\|\mathcal{N}(\tilde{q})-\mathcal{N}(q)\|^2,\quad s\geq 1 .
\]
The proof is then completed in the same manner like that of Theorem \ref{theorem-w3}. 
\end{proof}

\section{Determining a boundary coefficient in a wave equation}

\subsection{Inverse source problem for the wave equation with boundary damping}

In this subsection $\Omega =(0,1)\times (0,1)$ and
\begin{align*}
&\Gamma _0=((0,1)\times \{1\})\cup (\{1\}\times (0,1)),
\\
&\Gamma _1=((0,1)\times \{0\})\cup (\{0\}\times (0,1)).
\end{align*}

 Consider the IBVP
\begin{equation}\label{e3}
\left\{
\begin{array}{lll}
 \partial _t^2 u - \Delta u = \lambda (t)w \quad &\mbox{in}\;   \Omega \times (0,\tau), 
 \\
u = 0 &\mbox{on}\;  \Gamma _0 \times (0,\tau), 
\\
\partial _\nu u +a\partial _tu=0 &\mbox{on}\;  \Gamma _1 \times (0,\tau), 
\\
u(\cdot ,0) = 0,\quad \partial_t u (\cdot ,0) = 0.
\end{array}
\right.
\end{equation}

Fix $\frac{1}{2}<\alpha \le 1$ and let
\[
\mathscr{A}=\{ b=(b_1,b_2)\in C^\alpha ([0,1])\oplus C^\alpha ([0,1]),\; b_1(0)=b_2(0),\; b_j\geq 0\}. 
\]

Let $V=\{ u\in H^1(\Omega );\; u=0\; \textrm{on}\; \Gamma _0\}$ and define on $V\oplus L^2(\Omega )$ the operator $A_a$, $a\in \mathscr{A}$,  by
\begin{align*}
&A_a= (w,\Delta v),
\\
&D(A_a)=\{ (v,w)\in V\oplus V;\; \Delta v\in L^2(\Omega )\; \textrm{and}\; \partial _\nu v=-aw\; \textrm{on}\; \Gamma _1\}.
\end{align*}

We are going to apply Theorem \ref{theorem-isp2} with $H=V\oplus L^2(\Omega )$, $H_1=D(A_a)$ equipped with its graph norm and $Y=L^2(\Gamma _1)$.

Denote by $H_{-1}$ the dual of $H_1$ with respect to the pivot space $H$.

If $(0,w)\in H_{-1}$ and $\lambda \in H^1(0,\tau )$ then  the IBVP \eqref{e3} has a unique solution $u(w)$ so that $(u(w),\partial _tu(w)) \in C([0,\tau ]; V\oplus L^2(\Omega ))$ and $\partial_\nu  u(w)|{_{\Gamma _1\times (0,\tau)}}\in L^2(\Gamma _1\times (0,\tau ) )$.

Taking into account that $\{0\}\times V' \subset H_{-1}$, where $V'$ is the dual space of $V$, we obtain the following consequence of Theorem \ref{theorem-isp2}.

\begin{proposition}\label{proposition-e1}
There exists a constant $C>0$ so that for any $\lambda \in H^1(0,\tau )$ and $w\in V'$ we have
\begin{equation}\label{e4}
 \|w\|_{V'}\leq C|\lambda (0)|e^{\tau \|\lambda '\|^2_{L^2(0,\tau )}/|\lambda (0)|^2}\|\partial _\nu u_w\|_{L^2 (\Gamma _1\times (0,\tau ))}.
\end{equation}
\end{proposition}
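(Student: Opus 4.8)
We sketch the strategy. The plan is to recognize \eqref{e3} as an instance of the abstract Cauchy problem \eqref{isp4} and then to apply the singular-source estimate of Theorem \ref{theorem-isp3}. First I would put $H=V\oplus L^2(\Omega )$, $A=A_a$ (which, as recalled above, generates the strongly continuous semigroup $e^{tA_a}$ on $H$), $Y=L^2(\Gamma _1)$, and let $C\in\mathscr{B}(D(A_a),Y)$ be the conormal trace $C(\phi ,\psi )=\partial _\nu \phi|_{\Gamma _1}$. The hidden-regularity fact recalled just before the statement --- namely that $\partial _\nu u|_{\Gamma _1}\in L^2(\Gamma _1\times(0,\tau ))$ for the relevant solutions --- is exactly the assertion that $C$ is an admissible observation operator for $e^{tA_a}$. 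With $z=(u,\partial _tu)$ and $x=(0,w)$ one checks directly from \eqref{e3} that $z'=A_az+\lambda (t)x$, $z(0)=0$, and that the associated output is $Cz(t)=\partial _\nu u_w(\cdot ,t)|_{\Gamma _1}$; in the notation of the abstract framework this reads $\partial _\nu u_w=Ex=\widetilde{E}x$.

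The only analytic ingredient then needed is exact observability of $(A_a,C)$ at time $\tau$ (for $\tau$ large enough). For the fixed $a\in\mathscr{A}$ with $\delta _0\le a\le\delta _1$ on $\Gamma _1$ this is precisely Theorem \ref{theorem-O4}: for the homogeneous solution $z(t)=e^{tA_a}(u_0,u_1)$ one has
\[
\int_0^\tau\|Cz(t)\|_Y^2\,dt=\|\partial _\nu u(a,(u_0,u_1))\|^2_{L^2(\Gamma _1\times(0,\tau ))}\ge\kappa ^2\|(u_0,u_1)\|_H^2,
\]
that is, \eqref{isp3} holds for the pair $(A_a,C)$. Granting this, Theorem \ref{theorem-isp3} applies with the given $\lambda$ (understood with $\lambda (0)\ne 0$, otherwise the estimate is void): $E$ extends to an isomorphism $\widetilde{E}$ from $\mathcal{Z}'$ onto $L^2((0,\tau ),Y)$ and, by part (iii),
\[
\|x\|_{\mathcal{Z}'}\le\widetilde{\kappa}\,|\lambda (0)|\,e^{\tau\|\lambda '\|^2_{L^2((0,\tau ))}/|\lambda (0)|^2}\,\|\widetilde{E}x\|_{L^2((0,\tau ),Y)}.
\]
Taking $x=(0,w)$, $w\in V'$, the right-hand side becomes $\widetilde{\kappa}\,|\lambda (0)|\,e^{\tau\|\lambda '\|^2_{L^2((0,\tau ))}/|\lambda (0)|^2}\,\|\partial _\nu u_w\|_{L^2(\Gamma _1\times(0,\tau ))}$.

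It then remains to relate $\|(0,w)\|_{\mathcal{Z}'}$ to $\|w\|_{V'}$. Since $D(A_a)$, and likewise $D(A_a^\ast)$, embeds continuously into $V\oplus V$, one has $\{0\}\times V'\subset H_{-1}$ (as already observed before the statement); what the estimate \eqref{e4} really requires is the continuous inclusion $\{0\}\times V\hookrightarrow\mathcal{Z}$, equivalently $\mathcal{Z}'\hookrightarrow\{0\}\times V'$ with $\|w\|_{V'}\le C\|(0,w)\|_{\mathcal{Z}'}$. I would obtain this from a short computation based on the description $\mathcal{Z}=(\beta -A_a^\ast)^{-1}(H+C^\ast Y)$, namely by checking that $(\beta -A_a^\ast)(0,v)\in H$ for every $v\in V$, so that $v\mapsto(0,v)$ is bounded from $V$ into $\mathcal{Z}$. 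Combining this inclusion with the displayed estimate yields \eqref{e4}. I expect this last point --- the concrete identification, up to equivalent norms, of the abstract space $\mathcal{Z}'$ associated with the boundary observation $\partial _\nu(\cdot)|_{\Gamma _1}$ --- to be the only genuinely technical step; everything else is a direct transcription of Theorems \ref{theorem-O4} and \ref{theorem-isp3}. Alternatively, one can first prove \eqref{e4} for $w\in L^2(\Omega )$ using Theorem \ref{theorem-isp2} and then reach general $w\in V'$ by density together with the isomorphism property of $\widetilde{E}$.
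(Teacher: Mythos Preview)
Your proposal is correct and follows essentially the same route as the paper. The paper itself gives only a one-line justification just before the statement: it sets $H=V\oplus L^2(\Omega)$, $H_1=D(A_a)$, $Y=L^2(\Gamma_1)$, observes that $\{0\}\times V'\subset H_{-1}$, and says the proposition follows from the abstract source estimate. You reproduce this and are actually more careful on two points. First, the paper cites Theorem~\ref{theorem-isp2}, but since the source $(0,w)$ with $w\in V'$ lies only in $H_{-1}$ (not in $H$) and the output norm in \eqref{e4} is $L^2$ rather than $H^1_\ell$, the relevant abstract result is indeed Theorem~\ref{theorem-isp3}; your reading corrects what is evidently a misprint. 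Second, you rightly flag that passing from $\|(0,w)\|_{\mathcal{Z}'}$ to $\|w\|_{V'}$ requires the continuous inclusion $\{0\}\times V\hookrightarrow \mathcal{Z}$, a point the paper leaves implicit in its inclusion $\{0\}\times V'\subset H_{-1}$.
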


\subsection{Determining the boundary damping coefficient in a wave equation}

Let $\Omega$ and $\Gamma_i$, $i=1,2$ as in the preceding subsection and consider the IBVP
\begin{equation}\label{e5da}
\left\{
\begin{array}{lll}
 \partial _t^2 u - \Delta u = 0 \quad &\mbox{in}\;   \Omega \times (0,\tau), 
 \\
u = 0 &\mbox{on}\;  \Gamma _0 \times (0,\tau), 
\\
\partial _\nu u +a\partial _tu=0 &\mbox{on}\;  \Gamma _1 \times (0,\tau), 
\\
u(\cdot ,0) = u_0,\quad \partial_t u (\cdot ,0) = u_1.
\end{array}
\right.
\end{equation}

For $(u_0,u_1)\in H_1$ the IBVP \eqref{e5da} admits a unique solution $u=u(a,(u_0,u_1))$ so that
\[
(u_a,\partial _tu_a)\in C([0,\infty ),H_1)\cap C^1([0,\infty ),H).
\]

Fix $0<\underline{a} \le \aleph$ and set
\[
\underline{\mathscr{A}} =\{ b=(b_1,b_2)\in \mathscr{A}\cap H^1(0,1)\oplus  H^1(0,1);\; \underline{a} \le b_1,\, b_2,\; \|b\|_{H^1(0,1)\oplus  H^1(0,1)}^2\leq \aleph\}.
\]

Let $\mathcal{U}_0$ given by 
\[
\mathcal{U}_0=\{ v\in V;\; \Delta v\in L^2(\Omega )\; \textrm{and}\; \partial _\nu v=0\; \textrm{on}\; \Gamma _1\}
\]
and observe that $\mathcal{U}_0\times \{0\}\subset H_1$ for any $a\in \mathscr{A}$. We endow $\mathcal{U}_0$ with the norm
\[
\|u_0\|_{\mathcal{U}_0}=\left(\|u_0\|_V^2+\|\Delta u_0\|_{L^2(\Omega )}^2\right)^{1/2}.
\]

Define the initial-to-boundary operator
\[
\Lambda (a) :u_0\in \mathcal{U}_0 \mapsto \partial_\nu u\in L^2(\Gamma _1\times (0,\tau )),
\]
where $u=u(a,(u_0,u_1)$ is the solution of the IBVP \eqref{e5da}. Then 
\[
\Lambda (a)\in \mathscr{B}(\mathcal{U}_0,L^2(\Gamma _1\times (0,\tau))).
\]

Henceforward for convenience the norm of $\Lambda(a)-\Lambda (0)$ in $\mathscr{B}(\mathcal{U}_0,L^2(\Gamma _1\times (0,\tau)))$ will simply denoted by $\| \Lambda(a)-\Lambda (0)\|$.

The following H\"older stability inequality improve the result in \cite{AC3}.

\begin{theorem}\label{theorem-e2}
Let $\delta \in (0,1)$. There exists  a constant $C$ only depending of $\delta$, $\underline{a}$ and $\aleph$ so that
\begin{equation}\label{e6}
 \|a-0\|_{L^2(0,1)\oplus L^2(0,1)} \le C \| \Lambda(a)-\Lambda (0)\|^{\delta/[2(2+\delta)]}
\end{equation}
for each $a\in \underline{\mathscr{A}}$.
\end{theorem}

\begin{proof}
We first observe that $u(a)$ is also the unique solution of 
\begin{equation*}
\left\{
\begin{array}{lll}
\displaystyle  \int_\Omega u''(t)vdx=\int_\Omega \nabla u(t)\cdot \nabla vdx-\int_{\Gamma _1}au'(t)v,\quad  v\in V.
 \\
\\
 u(0)=u_0,\quad u'(0)=u_1.
\end{array}
\right.
\end{equation*}

Therefore $u=u(a)-\underline{u}$, where $\underline{u}=u(0,(u_0,u_1))$, is the solution of the following problem
\begin{equation}\label{e7}
\left\{
\begin{array}{lll}
\displaystyle \int_\Omega u''(t)vdx=\int_\Omega \nabla u(t)\cdot \nabla vdx-\int_{\Gamma _1}au'(t)v -\int_{\Gamma _1}au'(0)(t)v,\quad  v\in V.
 \\ \\
 u(0)=0,\quad u'(0)=0.
\end{array}
\right.
\end{equation}

For $k$, $\ell \in \mathbb{Z}$ set 
\begin{align*}
&\lambda_{k\ell}=\left[ \left(k+\frac{1}{2}\right)^2+\left(\ell +\frac{1}{2}\right)^2\right]\pi ^2
\\
&\phi_{k\ell}(x,y)=2\cos \left(\left(k+\frac{1}{2}\right)\pi x\right)\cos \left(\left(\ell +\frac{1}{2}\right)\pi y\right).
\end{align*}
and observe that $\underline{u}=\cos(\sqrt{\lambda_{k\ell}}\, t)\phi_{k\ell}$ when $(u_0,u_1)=(\phi_{k\ell},0)$. 

Fix $k$ and $\ell$ and set $\lambda (t)=\cos(\sqrt{\lambda_{k\ell}}\, t)$. Define $w(a)\in V'$ by
\[
w(a)(v)=-\sqrt{\lambda_{k\ell}}\int_{\Gamma _1}a\phi_{k\ell}v.
\]

Whence, \eqref{e7} becomes
\begin{equation*}
\left\{
\begin{array}{lll}
\displaystyle \int_\Omega u''(t)vdx=\int_\Omega \nabla u(t)\cdot \nabla vdx-\int_{\Gamma _1}au'(t)v +\lambda (t)w(a)(v),\quad v\in V.
 \\ \\
 u(0)=0,\;\; u'(0)=0.
\end{array}
\right.
\end{equation*}
In other words $u$ is the solution of \eqref{e3} with $w=w(a)$. We find by applying Proposition \ref{proposition-e1}
\begin{equation}\label{e8}
\|w(a)\|_{V'}\leq Ce^{\lambda_{k\ell}\tau ^2}\| \partial _\nu u\|_{L^2(\Gamma _1\times (0,\tau ))}.
\end{equation}
By noting that $(a_1\otimes a_2)\phi_{k\ell}\in V$ even if  $a_1\otimes a_2\not\in V$ we obtain
\begin{align}
&a_1(0)\left| \int_{\Gamma _1}(a\phi_{k\ell})^2d\sigma \right| =\frac{1}{\sqrt{\lambda_{k\ell}}}\left| w(a) ((a_1\otimes a_2)\phi_{k\ell})\right| \label{e9}
\\
&\hskip 5 cm\le \frac{1}{\sqrt{\lambda_{k\ell}}}\|w(a)\|_{V'}\|(a_1\otimes a_2)\phi_{k\ell}\|_V,\nonumber
\end{align}
where we used that $a_1(0)=a_2(0)$ and
\begin{equation}\label{e10}
\|(a_1\otimes a_2)\phi_{k\ell}\|_V \le C_0\sqrt{\lambda _{kl}}\| a_1\otimes a_2\|_{H^1(\Omega )}.
 \end{equation}
 Here and henceforth $C_0$ is a generic constant independent of $a$ and $\phi_{k\ell}$.

Now a combination of \eqref{e8}, \eqref{e9} and \eqref{e10} yields
\begin{align*}
a_1(0)\Big(\|a_1\phi _k\|_{L^2(0,1)}^2+&\|a_2\phi _\ell\|_{L^2(0,1)}^2\Big)
\\
&\le C\| a_1\|_{H^1(0,1)}\| a_2\|_{H^1(0,1)}e^{\lambda_{k\ell}\frac{\tau ^2}{2}}\| \partial _\nu u\|_{L^2(\Gamma _1\times (0,\tau ))},
\end{align*}
where $\phi_k (s)=\sqrt{2}\cos \left(\left(k+1/2\right)\pi s\right)$. This, $\underline{a}\leq a_j(0)$ and $\|a_j\|_{H^1(0,1)}\leq \aleph$ imply
\begin{equation*}
\|a_1\phi _k\|_{L^2(0,1)}^2+\|a_2\phi _\ell\|_{L^2(0,1)}^2\le C_0e^{\lambda_{k\ell}\frac{\tau ^2}{2}}\| \partial _\nu u\|_{L^2(\Gamma _1\times (0,\tau ))}.
\end{equation*}
Hence, where $j=1$ or $2$,
\begin{equation}\label{ajj}
\|a_j\phi _k\|_{L^2(0,1)}^2\le C_0e^{k^2\tau ^2\pi^2}\| \partial _\nu u\|_{L^2(\Gamma _1\times (0,\tau ))}.
\end{equation}
Let $\delta \in (0,1)$ be fixed. Observing that $\phi_0(s)\sim \pi (s-1)/2$ as $s\rightarrow 1$ we deduce that $|\phi_0|^{-\delta} \in L^1(0,1)$. Then we obtain by following the proof of  Lemma \ref{lemma-wii2}
\begin{align}
\|a_j\|_{L^2(0,1)} &\leq \||\phi_0|^{-\delta}\|_{L^1(0,1)}^{1/(2+\delta)}
\|a_j\|_{L^\infty(0,1)}^{2/(2+\delta)}\|a_j\phi _0\|_{L^2(0,1)}^{\delta/(2+\delta)} \label{aaa} 
\\
&\leq C \|a_j\phi _0\|_{L^2(0,1)}^{\delta/(2+\delta)}. \nonumber
\end{align}
A combination of inequalities \eqref{aaa} and \eqref{ajj} with $k=0$ yields
\begin{equation*}
\|a_j\|_{L^2(0,1)}\le
C  \|\partial _\nu u\|_{L^2(\Gamma _1\times (0,\tau ))}^{\delta/[2(2+\delta)]}.
\end{equation*}
This achieves the proof of the expected inequality.
\end{proof}

\end{document}